\numberwithin{equation}{section}
\numberwithin{subsection}{section}
\newtheorem{theorem}{Theorem}[section]
\newtheorem{proposition}[theorem]{Proposition}
\newtheorem{proposition-definition}[theorem]
{Proposition-Definition}
\newtheorem{corollary}[theorem]{Corollary}
\newtheorem{lemma}[theorem]{Lemma}
\theoremstyle{definition}
\newtheorem{definition}[theorem]{Definition}
\newtheorem{remark}[theorem]{Remark}
\theoremstyle{remark}
\newcommand\cA{\mathcal{A}} \newcommand\cB{\mathcal{B}}
\newcommand\cC{\mathcal{C}} \newcommand\cD{\mathcal{D}}
\newcommand\cE{\mathcal{E}} \newcommand\cF{\mathcal{F}}
 \newcommand\cH{\mathcal{H}}
\newcommand\cM{\mathcal{M}} 
\newcommand\cO{\mathcal{O}} 
\newcommand\cS{\mathcal{S}} 
 \newcommand\cX{\mathcal{X}}
\newcommand\cY{\mathcal{Y}} 
\renewcommand\AA{\mathbb{A}} 
\newcommand\GG{\mathbb{G}} \newcommand\HH{\mathbb{H}}
 \newcommand\LL{\mathbb{L}}
 \newcommand\NN{\mathbb{N}}
 \newcommand\PP{\mathbb{P}}
 \newcommand\ZZ{\mathbb{Z}}
\newcommand\rma{\mathrm{a}}
\newcommand\rmm{\mathrm{m}}
\newcommand\arr{\ifinner\to\else\longrightarrow\fi}
\newcommand\arrto{\ifinner\mapsto\else\longmapsto\fi}
\renewcommand\H{\operatorname{H}}
\newcommand\into{\hookrightarrow}
\def\displaytimes_#1{\mathrel{\mathop{\times}\limits_{#1}}}
\def\displayotimes_#1{\mathrel{\mathop{\bigotimes}\limits_{#1}}}
\renewcommand\hom{\operatorname{Hom}}
\newcommand\curshom{\mathop{\mathcal{H}\hskip -.8pt om}\nolimits}
\newcommand\ext{\operatorname{Ext}}
\newcommand\tor{\operatorname{Tor}}
\newcommand\pic{\operatorname{Pic}}
\newcommand\spec{\operatorname{Spec}}
\newcommand{\proj}{\operatorname{Proj}}
\newcommand\id{\mathrm{id}}
\newcommand{\underhom}{\mathop{\underline{\mathrm{Hom}}}\nolimits}
\newcommand{\underaut}{\mathop{\underline{\mathrm{Aut}}}\nolimits}
\newlength{\ignora}
\renewcommand{\setminus}{\smallsetminus}
\newcommand{\gm}{\GG_{\rmm}}
\newcommand{\GL}{\mathrm{GL}}
\newcommand{\ga}{\GG_{\rma}}
\newcommand{\ds}[1]{[\mspace{-2mu}[#1]\mspace{-2mu}]}
\DeclareFontFamily{U}{mathx}{\hyphenchar\font45}
\DeclareFontShape{U}{mathx}{m}{n}{
	<5> <6> <7> <8> <9> <10>
	<10.95> <12> <14.4> <17.28> <20.74> <24.88>
	mathx10
}{}
\DeclareSymbolFont{mathx}{U}{mathx}{m}{n}
\DeclareMathAccent{\widecheck}{0}{mathx}{"71}
\DeclareMathAccent{\wideparen}{0}{mathx}{"75}
\renewcommand{\epsilon}{\varepsilon}
\newcommand{\Mbar}{\overline{\cM}}
\newcommand{\Mtilde}{\widetilde{\mathcal M}}
\newcommand{\Ctilde}{\widetilde{\mathcal C}}
\newcommand{\mt}{\widetilde{\mathcal M}}
\begin{document}

\title{The moduli stack of $A_r$-stable curves}
\author{Michele Pernice}
\email{m.pernice(at)kth.se}
\address{KTH, Room 1642, Lindstedtsv\"agen 25,
114 28, Stockholm }

\classification{14H10 (primary), 14H20 (secondary)}

\keywords{Moduli stack of curves, $A_r$-singularities, Contraction Morphism}

\begin{abstract}
	
	This paper is the first in a series of four papers aiming to describe the (almost integral) Chow ring of $\Mbar_3$, the moduli stack of stable curves of genus $3$. In this paper, we introduce the moduli stack $\Mtilde_{g,n}^r$ of $n$-pointed $A_r$-stable curves and extend some classical results about $\Mbar_{g,n}$ to $\Mtilde_{g,n}^r$, namely the existence of the contraction morphism. Moreover, we describe the normalization of the locally closed substack of $\Mtilde_{g,n}^r$ parametrizing curves with $A_h$-singularities for a fixed $h\leq r$. 
\end{abstract}

\maketitle
\section*{Introduction}

The geometry of the moduli spaces of curves has always been the subject of intensive investigations, because of its manifold implications, for instance in the study of families of curves. One of the main aspects of this investigation is the intersection theory of these spaces, which can used to solve either geometric, enumerative or arithmetic problems regarding families of curves. In his groundbreaking paper \cite{Mum}, Mumford introduced the intersection theory with rational coefficients for the moduli spaces of stable curves. After almost two decades, Edidin and Graham introduced in \cite{EdGra} the intersection theory of global quotient stacks (therefore in particular for moduli stacks of stable curves) with integer coefficients.

To date, several computations have been carried out. While the rational Chow ring of $\cM_g$, the moduli space of smooth curves, is known for $2\leq g\leq 9$ (\cite{Mum}, \cite{Fab}, \cite{Iza}, \cite{PenVak}, \cite{CanLar}), the complete description of the rational Chow ring of $\Mbar_g$, the moduli space of stable curves, has been obtained only for genus $2$ by Mumford and for genus $3$ by Faber in \cite{Fab}. As expected, the integral Chow ring is even harder to compute: the only complete description of the integral Chow ring of the moduli stack of stable curves is the case of $\Mbar_2$, obtained by Larson in \cite{Lar} and subsequently with a different strategy by Di Lorenzo and Vistoli in \cite{DiLorVis}. It is also worth mentioning the result of Di Lorenzo, Pernice and Vistoli regarding the integral Chow ring of $\Mbar_{2,1}$, see \cite{DiLorPerVis}.

The aim of this series of four papers is to describe the Chow ring with $\ZZ[1/6]$-coefficients of the moduli stack $\Mbar_3$ of stable genus $3$ curves. This provides a refinement of the result of Faber with a completely indipendent method, which has the potential to give a more algorithmic way to compute these Chow rings. The approach is a generalization of the one used in \cite{DiLorPerVis}: we introduce an Artin stack, which is called the stack of $A_r$-stable curves, where we allow curves with $A_r$-singularities to appear. The idea is to compute the Chow ring of this newly introduced stack in the genus $3$ case and then, using localization sequence, find a description for the Chow ring of $\Mbar_3$. The stack $\Mtilde_{g,n}$ introduced in \cite{DiLorPerVis} is cointained as an open substack inside our stack. 

\subsection*{Outline of the paper}

This is the first paper in the series. It focuses on introducing the moduli stack $\Mtilde_g^r$ of $A_r$-stable curves: this is the new actor for the computation of the Chow ring of $\Mbar_3$. Firstly, we prove several results which are straightforward generalizations of the stable case. We construct the contraction morphism for the $A_r$-stable case, which is fundamental for some of the computations of the Chow ring of $\Mtilde_3^7$. Finally, we describe the locally closed substack parametrizing curves with $A_n$-singularities for a fixed $n$. The description will be fundamental in the fourth paper to understand the ideal of relations in the Chow ring of $\Mtilde_3^7$ coming from the closed substack $\Mtilde_3^7\setminus \Mbar_3$ and, using localization sequence, to eventually get the Chow ring of $\Mbar_3$ as a quotient of the Chow ring of $\Mtilde_3^7$. 

Specifically, \Cref{sec:1} is dedicated to introducing $A_r$-singularity (and $A_r$-stable curves) and studying contractions in the setting of $A_r$-stable curves. In particular, we extend some of the results cointained in the first two chapter of \cite{Knu} to our setting, simplifying the proofs where possible. The main result in this section is the existence and uniqueness of (minimal) contractions.

In \Cref{sec:2}, we define the moduli stack $\Mtilde_{g,n}^r$ of $n$-pointed $A_r$-stable curves of genus $g$ and prove that it is a smooth global quotient stack. We also prove that it shares some features with the classic stable case, for example the existence of the Hodge bundle. The main theorem of this section is that the contraction morphism is an open immersion (instead of an isomorphism as in the stable case).

\Cref{sec:3} focuses on proving some preliminary results needed for \Cref{sec:4}. It is divided in two indipendent parts. In the first one, we describe a variant of the stack of finite flat algebras, namely the stack of finite flat extensions of degree $d$ of finite flat (curvilinear) algebras and prove that it is an affine bundle over a classifying stack. In the second one, we prove some technical results regarding blowups and pushouts that are probably well-known to experts, aiming to verify that the two constructions behave well in families and are inverse of each other in our setting.

Finally, in \Cref{sec:4} we describe the closed complement of $\Mbar_{g,n}$ inside $\Mtilde_{g,n}^r$, stratifying it by complexity of the singularities that can appear in the curves. We use pushouts and blowups to describe the locally closed substack parametrizing curves with a $A_h$-singularity and we prove that it is (up to an explicit finite \'etale cover) an affine bundle over some moduli stack of curves (which is also described completely) with stricly lower genus.

\section{$A_r$-stable curves and contractions}\label{sec:1}
Fix a nonnegative integer $r$.

\begin{definition}	Let $k$ be an algebraically closed field and $C/k$ be a proper reduced connected one-dimensional scheme over $k$. We say the $C$ is an \emph{$A_r$-prestable curve} if it has at most $A_r$-singularity, i.e. for every closed point $p\in C(k)$, we have an isomorphism
$$ \widehat{\cO}_{C,p} \simeq k[[x,y]]/(y^2-x^{h+1}) $$ 
with $ 0\leq h\leq r$. 
Moreover, suppose $\Sigma \subset C$ is an effective Cartier divisor whose support is in the smooth locus of a $A_r$-prestable curve $C$. We say that the pair $(C,\Sigma)$ is $A_r$-semistable if the following two conditions are verified:
\begin{itemize}
 \item for every irreducible component $\Gamma$ of $C$ the integer $a_{\Gamma}:=\deg \omega_C(\Sigma)\vert_{\Gamma} $ is nonnegative;
 \item there exists at most one irreducible component $\Gamma_0$ such that $a_{\Gamma_0} =0$; such component is called the semistable component. 
\end{itemize}
 we say that a pair is $A_r$-stable if $a_{\Gamma}>0$ for every irreducible component $\Gamma$ of $C$.

 Finally, a $n$-pointed $A_r$-stable curve over $k$ is $A_r$-prestable curve together with $n$ smooth distinct closed points $p_1,\dots,p_n$ such that the pair $(C,\cO_C(p_1+\dots+p_n))$ is $A_r$-stable, i.e. $\omega_C(p_1+\dots+p_n)$ is ample. If $n=0$, we just say that $C$ is a $A_r$-stable curve.
\end{definition}
\begin{remark}
	Notice that a $A_r$-prestable curve is l.c.i by definition, therefore the dualizing complex is in fact a line bundle. 
\end{remark}

Notice that every time we talk about genus, we intend arithmetic genus, unless specified otherwise.

\begin{remark}\label{rem:genus-count}
	Let $C$ be a connected, reduced, one-dimensional, proper scheme over an algebraically closed field. Let $p$ be a rational point of $C$ which is a $A_r$-singularity. We denote by $b:\widetilde{C}\arr C$ the partial normalization at the point $p$ and by $J_b$ the conductor ideal of $b$. Then a straightforward computation shows that \begin{enumerate}
		\item if $r=2h$, then $g(C)=g(\widetilde{C})+h$;
		\item if $r=2h+1$ and $\widetilde{C}$ is connected, then $g(C)=g(\widetilde{C})+h+1$,
		\item if $r=2h+1$ and $\widetilde{C}$ is not connected, then $g(C)=g(\widetilde{C})+h$.
	\end{enumerate}
	If $\widetilde{C}$ is not connected, we say that $p$ is a separating point. Furthermore, Noether formula gives us that $b^*\omega_C \simeq \omega_{\widetilde{C}}(J_b^{\vee})$.
\end{remark}

The rest of this section is dedicated to extend to theory of contractions of $n$-pointed ($A_1$-)stable curves to the case of $A_r$-stable curves. More generally, we will prove several results about contractions of $A_r$-semistable pairs over algebraically closed fields, namely the existence and unicity of contractions.

The following results are the natural generalizations of the ones in the first section of \cite{Knu}, where the author proves them in the context of classical stable curves. We follow Knutsen's strategy, simplifying where possible. 

\begin{definition}
 Let $(C,\Sigma)$ be a $A_r$-semistable pair. Then a contraction of such a pair is a morphism  $c:C \rightarrow C'$ of $A_r$-prestable curves such that 
 \begin{itemize}
  \item[(i)] if $a_{\Gamma}>0$ for every irreducible component $\Gamma$ of $C$, then $f$ is an isomorphism;
  \item[(ii)] if $a_{\Gamma}=0$ for some irreducible component $\Gamma_0$ then:
  \begin{itemize}
     \item[a)] $c(\Gamma_0)$ is contracted to a closed point $p_{0}$ and $c^{-1}(p_{0})\simeq \Gamma_0$ (schematically); 
     \item[b)]  the restriction
  $$ c\vert_{C\setminus \Gamma_0}: C\setminus \Gamma_0 \longrightarrow C' \setminus p_0$$ 
  is an isomorphism.
 \end{itemize}
  
 \end{itemize}

\end{definition}

\begin{remark} \label{rem:poss-contrac}
 It is important to remark that if $(C,\Sigma)$ is a $A_r$-semistable pair and $\Gamma$ is an irreducible component of $C$, then $a_{\Gamma}=0$ implies that $g(\Gamma)=0$ and therefore $\Gamma$ is a projective line. Moreover, we can have only three possible scenarios: 
 \begin{itemize}
  \item[(1)] $\Gamma$ intersects the rest of the curve in one node and $\Sigma\cap \Gamma$ is the divisor associated to a point in $\Gamma$ (different from the node);
  \item[(2)] $\Gamma$ intersects the rest of the curve in two nodes and $\Sigma\cap \Gamma = \emptyset$;
  \item[(3)] $\Gamma$ interescts the rest of the curve in one tacnode and $\Sigma\cap \Gamma = \emptyset$.
 \end{itemize}
The cases (1) and (2) come from the classical stable case and they were deeply studied by Knutsen in \cite{Knu}; these components are denoted respectively \emph{rational tails} and \emph{rational bridges}.  The only new components are of type (3), which is the limit of case (2) when the two nodes coincide, and we will denote such components as \emph{rational almost-bridges}. 
\end{remark}

\begin{remark}\label{rem:cart-div}
 Using the previous remark, one can see that if $c:C\rightarrow C'$ is a contraction of a $A_r$-semistable pair $(C,\Sigma)$, then $c(\Sigma)\subset C'$ is still an effective Cartier divisor and the morphism $c\vert_{\Sigma}: \Sigma \rightarrow c(\Sigma)$ is an isomorphism. Moreover, it follows easily that $(C',c(\Sigma))$ is a $A_r$-stable pair.
\end{remark}

First of all, we want to prove the existence of such contractions. To do so, we need a technical result, which will be very useful later on. This result is a generalization of Theorem 1.8 of \cite{Knu} for $A_r$-semistable pairs.

\begin{proposition}\label{prop:boundedness}
	Let $(C,\Sigma)$ be a $A_r$-semistable pair over an algebraically closed field $k$. Then 
	\begin{enumerate}
		\item[i)] $\H^1(C,\omega_C(\Sigma)^{\otimes m})=0$ for every $m\geq 2$,
		\item[ii)] $\omega_C(\Sigma)^{\otimes m}$ is globally generated for every $m\geq 2$.
		\item[iii)] $\omega_C(\Sigma)^{\otimes m}$ is normally generated for $m\geq 4$.
	\end{enumerate}
\end{proposition}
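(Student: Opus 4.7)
The plan is to prove all three parts in parallel using the same philosophy: Serre duality reduces each vanishing or surjectivity to the vanishing of $\H^0$ of a negatively twisted line bundle (valid since $A_r$-prestable curves are l.c.i.\ and hence Gorenstein), followed by a degree count on each irreducible component and a matching argument at the singular attachment points. This mirrors Knutsen's approach in Theorem~1.8 of \cite{Knu}, with the new ingredient being the higher-order matching condition at $A_r$-singularities with $r \geq 2$.

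For (i), I would apply Serre duality to obtain
$$\H^1(C,\omega_C(\Sigma)^{\otimes m})^\vee \cong \H^0(C,\omega_C^{\otimes(1-m)}(-m\Sigma)),$$
and compute that on each irreducible component $\Gamma$ the restricted line bundle has degree $(1-m)a_\Gamma - \deg(\Sigma\vert_\Gamma)$. On components with $a_\Gamma \geq 1$ this is $\leq -(m-1)<0$ for $m \geq 2$, killing any local section. On the unique semistable component $\Gamma_0$, if present, the three cases of Remark~\ref{rem:poss-contrac} are treated separately: a rational tail gives degree $-1$, while a rational bridge or almost-bridge gives degree $0$, and the adjunction formula (the conductor of an $A_{2h+1}$-attachment contributes $h+1$ to each branch) shows that $\omega_C\vert_{\Gamma_0}\cong\cO_{\Gamma_0}$, so the bundle is trivial. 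In that case a putative global section restricts to a constant on $\Gamma_0\cong\PP^1$; the gluing at each attachment forces the first $h+1$ Taylor coefficients to agree with those on the adjacent stable component, on which the section is identically zero by negative degree, and a nonzero constant cannot vanish to positive order, so the constant is forced to be zero.

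For (ii), I would combine the exact sequence $0\to\cI_pL\to L\to L\otimes k(p)\to 0$ with part (i) to reduce global generation at $p\in C$ to the vanishing of $\H^1(C,\cI_pL)$, where $L=\omega_C(\Sigma)^{\otimes m}$. For smooth $p$ the ideal sheaf is $\cO_C(-p)$ and the proof of (i) applies almost verbatim to $\omega_C^{\otimes(1-m)}(-m\Sigma+p)$: the extra $+1$ on the component through $p$ at worst produces a degree-$0$ or degree-$1$ line bundle, and the matching constraints (a constant with a required zero is zero; a linear form on $\PP^1$ vanishing at two nodes or to order $2$ at a tacnode is zero) still eliminate any section. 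For singular $p$, I would pass to the partial normalization $\pi\colon\widetilde C\to C$ at $p$ and compare $\cI_pL$ with $\pi_*(\pi^*L(-\widetilde p))$, reducing to an analogous vanishing on $\widetilde C$. For (iii), a Castelnuovo--Mumford type criterion reduces normal generation of $L$ to the vanishing of $\H^1(L^{\otimes k}\otimes \cI_p\otimes \cI_q)$ for all $k\ge 1$ and all $p,q\in C$; the same degree plus matching argument works once $m\ge 4$ ensures strict negativity after two ideal-sheaf twists.

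The main obstacle I anticipate is the careful treatment at higher $A_r$-singularities. At an $A_{2h+1}$ attachment the scheme-theoretic matching of sections requires the first $h+1$ Taylor coefficients on each branch to agree, so the ``matching at a point'' argument from the nodal case becomes a higher-order vanishing condition that must be reconciled with the degree count in every edge case arising in (i)--(iii). A related subtlety in (ii) and (iii) is that at singular points $\cI_p$ (and $\cI_p\otimes\cI_q$) are not invertible, so the clean Serre-dual reformulation used at smooth points must be supplemented by a local analysis on the partial normalization. Once these delicate cases are checked component by component according to the classification in Remark~\ref{rem:poss-contrac}, the thresholds $m\ge 2$ and $m\ge 4$ emerge naturally from the degree estimates.
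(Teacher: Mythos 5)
For parts (i) and (ii) your route is essentially the paper's: Serre duality, the component-wise degree estimate $(1-m)a_\Gamma-\deg(\Sigma\vert_\Gamma)$, the case analysis of Remark~\ref{rem:poss-contrac} on the semistable component, and a passage to the partial normalization at singular points (the paper phrases that last step dually, via the inclusion $\underhom(m_p,\cO_C)\subset\pi_*\cO_{\widetilde C}$ from Catanese, but the content is the same). Those two parts are fine modulo the expected bookkeeping.

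Part (iii) contains a genuine gap: the criterion you invoke is not a criterion for normal generation. The vanishing of $\H^1(L^{\otimes k}\otimes\cI_p\otimes\cI_q)$ for all $p,q$ is the classical test for separating points and tangent vectors, i.e.\ for very ampleness; it says nothing about the surjectivity of the multiplication maps $\operatorname{Sym}^k\H^0(L^{\otimes m})\to\H^0(L^{\otimes km})$, which is what normal generation means, and very ample line bundles on curves need not be projectively normal. The tool that actually works --- and the one the paper uses, following Knutsen --- is the Generalized Lemma of Castelnuovo: if $N$ is globally generated and $\H^1(F\otimes N^{-1})=0$, then $\H^0(N)\otimes\H^0(F)\to\H^0(N\otimes F)$ is surjective. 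One factors the multiplication map $\H^0(L^{\otimes m})\otimes\H^0(L^{\otimes km})\to\H^0(L^{\otimes (k+1)m})$ through $\H^0(L^{\otimes 2})\otimes\H^0(L^{\otimes m-2})\otimes\H^0(L^{\otimes km})$ and applies the lemma twice, with $N=L^{\otimes 2}$ and $N=L^{\otimes m-2}$; both exponents must be at least $2$ so that part (ii) provides global generation, which is exactly where the threshold $m\geq 4$ comes from, together with the vanishings $\H^1(L^{\otimes (km-2)})=\H^1(L^{\otimes (km+4-m)})=0$ supplied by part (i). Your heuristic that ``$m\ge 4$ ensures strict negativity after two ideal-sheaf twists'' neither reproduces this threshold nor would it, even if granted, yield surjectivity of the multiplication maps.
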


\begin{proof}
	 Using duality, i) is equivalent to 
	$$ \H^0(C,\omega_C^{\otimes (1-m)}(-m\Sigma))=\H^0(C,\omega_C(\Sigma)^{\otimes (1-m)}(-\Sigma))=0$$
	for every $m\geq 2$. As before, we denote by $a_{\Gamma}$ the degree of $\omega_C(\Sigma)\vert_{\Gamma}$ where $\Gamma$ is an irreducible component of $C$. Thus, 
	$$\deg \big(\omega_C(\Sigma)^{\otimes 1-m}(-\Sigma)\big) \leq (1-m)a_{\Gamma}\leq 0$$
	for every $m\geq2$. Thus the sections of the line bundle are zero restricted on every irreducible component $\Gamma$ such that $a_{\Gamma}>0$. Because the restriction of the line bundle to a semistable component is non-canonically isomorphic to $\cO_{\PP^1}$, which is globally generated, it easily follows that all the sections must vanish.

	Regarding ii), we need to prove that for every closed point $p \in C$, we have that 
	$$ \H^1(C,m_p\omega_C(\Sigma)^{\otimes m})= 0$$ 
	where $m_p$ is the ideal defining the point $p$. It is equivalent by duality to 
	$$ \hom_C(m_p,\omega_C^{\otimes (1-m)}(-\Sigma))=0.$$
	
	If $p$ is singular on C, let us call $\pi:\tilde{C} \rightarrow C$ the partial normalization in $p$ of $C$. We have that (see Lemma 2.1 of \cite{Cat})
	$$ \underhom(m_{p},\cO_C) \subset \pi_{*}\cO_{\tilde{C}}$$
	therefore  
	$$\hom_{\cO_C}(m_p, \omega_C(\Sigma)^{\otimes (1-m)}(-\Sigma))\subset \H^0(\tilde{C}, \pi^*\omega_C(\Sigma)^{\otimes (1-m)}(-\Sigma)).$$
	Thus, it is enough to prove that the right hand side of the previous inclusion is trivial. Notice that if $\tilde{C}$ is connected, the same argument as before gives us the vanishing of the cohomology group (because $\pi$ is birational). The same is true if $\tilde{C}$ is disconnected as long as none of the two connected components is equal to the semistable component. 
	
	Suppose now that $p$ is a separating point and $p$ belongs to the semistable component. It is easy to see that if $\omega_C(\Sigma)^{\otimes m}$ is not globally generated in $p$, then it is not globally generated on every point of the semistable component. Therefore we can reduce ourselves to prove that the line bundle is globally generated on every smooth point, which is equivalent to the vanishing  
	$$H^0(C,\omega_C(\Sigma)^{\otimes m}(-\Sigma + p)) = 0$$
	where $p$ is a smooth point of $C$. Because $m\geq 2$, it is easy to see that the only non-trivial case is when $p$ belongs to the semistable component. 
	
	Suppose $p$ is a smooth point of a semistable component $\Gamma$ of type (2) or (3) as in \Cref{rem:poss-contrac}. We have that 
	$$\omega_C(\Sigma)^{\otimes m}(-\Sigma+p)\vert_{\Gamma}\simeq \omega_{\Gamma}(D)^{\otimes m}(p)$$
	where $D$ is a lenght $2$ divisor on $\Gamma$. Therefore the restriction of the line bundle to $\Gamma$ is non-canonically isomorphic to $\cO_{\PP^1}(p)$, which is very ample. Because the curve $\Gamma$ intersects the rest of the curve in a lenght $2$ divisor, the result follows. 
	
	Moreover, suppose $p$ is a smooth point of a semistable component $\Gamma$ of type (1) as in \Cref{rem:poss-contrac}.  We have that 
	$$\omega_C(\Sigma)^{\otimes m}(-\Sigma+p)\vert_{\Gamma}\simeq \omega_{\Gamma}(p+q)^{\otimes m}(p-q)$$
	where $q$ is the point where $\Gamma$ intersects the rest of the curve. Therefore the restriction of the line bundle to $\Gamma$ is non-canonically isomorphic to $\cO_{\PP^1}$ which is globally generated. Because $\Gamma$ intersects the rest of the curve in a point, the result follows. 
	
	Finally, we prove that if $L:=\omega(\Sigma)$ then $L^{\otimes m}$ is normally generated for $m \geq 4$. This is a simplified version of the proof in \cite{Knu} as the author proved the same statement for $m\geq 3$ which requires more work. For simplicity, we write $\H^0(-)$ and $\H^1(-)$ instead of $\H^0(C,-)$ and $\H^1(C,-)$.
	
	Consider the following commutative diagram for $k\geq 1$
	$$
	\begin{tikzcd}
		\H^0(L^{\otimes 2}) \otimes \H^{0}(L^{\otimes m-2}) \otimes H^0(L^{\otimes km}) \arrow[rr] \arrow[d, "\varphi_1"] &  & \H^0(L^{\otimes m})\otimes \H^0(L^{\otimes km}) \arrow[d, "\phi"] \\
		\H^0(L^{\otimes m-2}) \otimes \H^0(L^{\otimes km+2}) \arrow[rr, "\varphi_2"]                                      &  & \H^0(L^{\otimes (k+1)m});                                        
	\end{tikzcd}
	$$
	we need to prove that $\phi$ is surjective, therefore it is enough to prove that $\varphi_1$ and $\varphi_2$ are surjective. As both $L^{\otimes 2}$ and $L^{\otimes m-2}$ are globally generated because $m\geq 4$, we can use the Generalized Lemma of Castelnuovo (see pag 170 of \cite{Knu}) and reduce to prove that $\H^1(L^{\otimes (km-2)})$ and $\H^1(L^{\otimes (km+4-m)})$ are zero for $k\geq 1$. The vanishing results follow from the fact that $km-2\geq 2$ and $km+4-m \geq 2$ for every $m\geq 4$ and $k\geq 1$. 
	
\end{proof}

\begin{remark}
	One can also prove that $\omega_C(\Sigma)^{\otimes m}$ is normally generated for $m\geq 3$. In fact, the same proof of Theorem 1.8 of \cite{Knu} can be perfectly adapted to our generality.
\end{remark}

\begin{corollary}\label{cor:contrac-field}
    Let $(C,\Sigma)$ be a $A_r$-semistable pair over an algebraically closed field. If we denote by $C'$ the schematic image of the morphism $\phi_4:C\rightarrow \PP(\H^0(\omega_C(\Sigma)^{\otimes 4}))$, then the morphism $c:C \rightarrow C'$ induced by $\phi_4$, is a contraction.
\end{corollary}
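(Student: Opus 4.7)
The plan is to verify the two conditions of the definition of contraction directly, exploiting the positivity of $L := \omega_C(\Sigma)^{\otimes 4}$ established in \Cref{prop:boundedness}. Part (ii) gives global generation, so $\phi_4$ is a well-defined morphism; part (iii) gives normal generation, which identifies the image $C'$ with $\proj \bigoplus_{n\geq 0}\H^0(C,L^{\otimes n})$. This projective description of $C'$ will be the main computational tool.

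If $(C,\Sigma)$ is $A_r$-stable, then $L$ is ample in addition to being globally and normally generated, so $\phi_4$ is a closed immersion and $c$ is an isomorphism; condition (i) of the definition is immediate.

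Otherwise there is a unique semistable component $\Gamma_0$, and $L\vert_{\Gamma_0} \simeq \cO_{\PP^1}$ forces every global section to be constant along $\Gamma_0$, so $\phi_4(\Gamma_0) = \{p_0\}$ is a single point. To prove that $c\vert_{C\setminus \Gamma_0}\colon C\setminus \Gamma_0 \to C'\setminus\{p_0\}$ is an isomorphism, I would rerun the argument of \Cref{prop:boundedness}(ii) on $L\otimes \frkm_p$ to separate points and tangent vectors on $C\setminus \Gamma_0$; normal generation upgrades this to a closed immersion into an open subscheme, and properness of $c$ then yields surjectivity. The scheme-theoretic equality $c^{-1}(p_0) = \Gamma_0$ follows from the fact that the global sections of $L$ separate $\Gamma_0$ from the rest of $C$.

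The hard part, which I expect to be the main obstacle, is verifying that $C'$ is itself $A_r$-prestable, i.e., that $p_0$ is at worst an $A_h$-singularity with $h \leq r$. Using \Cref{rem:poss-contrac}, I would split into three subcases: if $\Gamma_0$ is a rational tail, then $p_0$ is smooth; if $\Gamma_0$ is a rational bridge, then $p_0$ is a node; and if $\Gamma_0$ is a rational almost-bridge, then $p_0$ is a cusp. In each case one identifies the completed local ring $\widehat{\cO}_{C',p_0}$ with the subring of $\widehat{\cO}_{\widetilde{C},q}$ (where $\widetilde{C}$ is the partial normalization of $C$ at the attaching singularity $q$) consisting of sections constant on the branches coming from $\Gamma_0$ and matching through the conductor of $q$. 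The tacnode case is the subtlest: the conductor of a separating $A_3$-singularity forces matching of both values and first-order jets, which produces the subring $k + t^2 k[[t]] \subset k[[t]]$, precisely the local ring of an $A_2$-singularity. Since the worst singularity obtained in this way is $A_2$, and rational almost-bridges only occur when $r \geq 3$, we conclude that $C'$ is $A_r$-prestable and condition (ii) of the definition is verified.
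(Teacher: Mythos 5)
Your proposal is correct and follows essentially the same route as the paper: reduce to the case of a single semistable component, use \Cref{prop:boundedness} to see that $\phi_4$ is defined and contracts $\Gamma_0$ while being an isomorphism elsewhere, and then compute the completed local ring at the image point case by case, with the rational almost-bridge (tacnode) case yielding exactly the subring $k+t^2k[[t]]$, i.e.\ an $A_2$-singularity. The paper is terser on the isomorphism away from $\Gamma_0$ and only writes out the tacnode case (declaring the tail and bridge cases classical), but the substance, including the key local computation, coincides with yours.
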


\begin{proof}
If $\omega_C(\Sigma)$ is ample, i.e. there are no semistable components, then $\omega_C(\Sigma)^{\otimes 4}$ is ample and normally generated, therefore very ample. In this case, the morphism $\phi_4$ is a closed embedding and $c$ is an isomorphism. Suppose $\Gamma$ is the semistable component. \Cref{prop:boundedness} assures us that $\phi_4$ is globally defined and it is easy to see that it contracts $\Gamma$ to a point. Because $\omega_C(\Sigma)^{\otimes 4}\vert_{\Gamma}$ is non-canonically isomorphic to $\cO_{\PP^1}$, we have that $$c\vert_{C\setminus \Gamma}:C\setminus \Gamma \longrightarrow C' \setminus c(\Gamma)$$   
is an isomorphism. We need to prove that $C'$ is a $A_r$-prestable curve and that $c^{-1}(c(\Gamma))\simeq \Gamma$. We will prove the result only in the case (3) of \Cref{rem:poss-contrac}, as the other two are classical. 

Let $p$ be the tacnode on $C$ where $\Gamma$ intersects the rest of the curve and $q$ be the image through $c$, i.e. $q:=c(p)$. We have the morphism of local rings
$$ \phi_4: \cO_{\PP(\H^0(\omega_C(\Sigma)^{\otimes 4})),q} \longrightarrow \cO_{C,p}$$
and the image of this morphism is exactly the local ring $\cO_{C',q}$. If we pass to the completions, we get the morphism 
$$ \phi_4: k[[x_1,\dots,x_m]] \longrightarrow k[[x,y]]/(y^2-x^4).$$

We identify the target of the morphism with the subalgebra of $k[[t]]\times k[[t]]$ where the two polynomials coincide up to degree $1$. Thus because  
$\omega_C(\Sigma)^{\otimes 4}\vert_{\Gamma}\simeq \cO_{\PP^1}$, we have that $\phi_4(x_i)\vert_{\Gamma}=x_i(p)$, i.e. the section $x_i$ at the point $p$. As $c$ is birational, it is clear that the image of $\phi_4$ is isomorphic to the subalgebra of $k[[t]]\times k[[t]]$ with elements of the form $(p(t),p(0))$ where $p(t) \in k[[t]]$ such that $p'(0)=0$. Therefore the completion of $\cO_{C',q}$ is a $A_2$-singularity, i.e. the complete local ring of a cusp. In fact, up to isomorphism we have that the morphism $c$ is locally of the form
$$ k[[x,y]]/(y^2-x^3) \hookrightarrow k[[x,y]]/(y^2-x^4) $$
where $(x,y) \mapsto (\frac{y+x^2}{2}, \frac{x(y+x^2)}{2})$. Finally, this implies $c^{-1}(q)\simeq \PP^1$.
 \end{proof}

\begin{remark}
 Notice that this definition of contraction is not enough to have unicity. For example, suppose we are in case (3) of \Cref{rem:poss-contrac}. \Cref{cor:contrac-field} implies that we have a contraction $$c:C \longrightarrow C'$$ of a pair $(C,\Sigma)$ with an almost-stable bridge that is contracted to a cusps. We can then find a map from $g:C' \rightarrow C''$ where $C''$ is still $A_r$-prestable, the map $g$ is an isomorphism outside the cusp of $C'$, the image of the cusp through $g$ is a $A_4$-singularity and locally on the cusps is defined as the morphism 
 $$ g: k[[x,y]]/(y^2-x^5) \longrightarrow k[[x,y]]/(y^2-x^3) $$ 
 such that $(x,y) \mapsto (x,xy)$. The composition $g\circ c$ is still a contraction. The difference is that while $C'$ has the same arithmetic genus of $C$, the same is not true for $C''$.
\end{remark}

The previous remark motivates the following definition.
\begin{definition}
 A contraction $c:C \rightarrow C'$ of a $A_r$-semistable pair $(C,\Sigma)$ is called \emph{minimal} if $g(C)=g(C')$.
\end{definition}

\begin{remark}\label{rem:expl-contrac}
 It is easy to see that a minimal contraction of a $A_r$-semistable pair sends rational tails to smooth points, rational bridges to nodes and almost bridges to cusps. In particular, there is only one minimal contraction up to isomorphism over an algebraically closed field and the almost-bridge contraction can be described \'etale-locally as
 $$ k[x,y]/(y^2-x^3) \longrightarrow k[x,y]/(y^2-x^4)$$ 
 defined by the association $(x,y) \mapsto (\frac{y+x^2}{2}, \frac{x(y+x^2)}{2})$.
 
 The following picture describes the three possible minimal contraction which can appear over an algebraically closed field. 
 \begin{figure}[H]
		\caption{Contraction morphisms}
		\centering
		\includegraphics[width=1\textwidth]{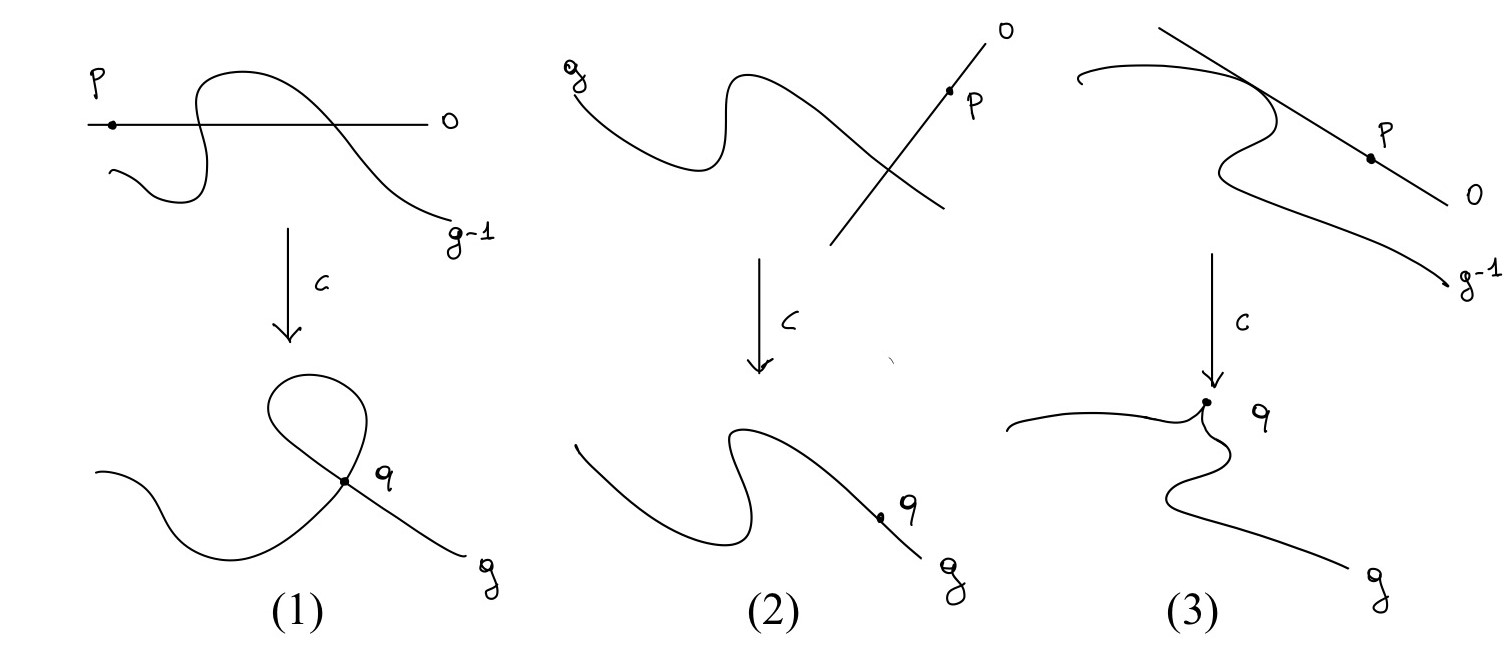}
		\label{fig:contrac}
	\end{figure}
The first one represents the minimal contraction of a rational bridge, the second one represents the minimal contraciton of a rational tail and the third one represents the contraction of an almost bridge.
\end{remark}

Before going to analyze unicity of contractions, we introduce the same notion for families. 

\begin{definition}
 A $A_r$-prestable curve $C$  of genus $g$ over $S$ is a proper, flat, finitely presented scheme over $S$ such that for every geometric point $s \in S$ the fiber $C_s$  is an $A_r$-prestable curve of genus $g$.
 
 A $A_r$-semistable pair $(C/S,\Sigma)$ of genus $g$ over a base scheme $S$ is a $A_r$-prestable curve  of genus $g$ over $S$ together with an effective  Cartier divisor $\Sigma \subset C$ over $S$ such that for every geometric point $s \in S$ the fiber $(C_s, \Sigma_s)$ is a $A_r$-semistable pair. In the same way, we can define $A_r$-stable pairs and $A_r$-stable curves.
 
 A contraction $c:C \rightarrow C'$ of a $A_r$-semistable pair $(C,\Sigma)$ over $S$ is a morphism or $A_r$-prestable curves such that for every geometric point $s \in S$ the fiber $c_s: C_s \rightarrow C_s'$ is a contraction. A contraction over $S$ is called minimal if it is minimal for every geometric fiber. 
\end{definition}

\begin{remark}
 Notice that because we require flatness for $C'$, if a contraction over $S$ is minimal over a geometric point $s \in S$ then it minimal over the whole family.
\end{remark}

\begin{remark}\label{rem:cart-div-fam}
 It is important to remark that an effective Cartier divisor $\Sigma \subset C$ over $S$ is a flat, proper, finitely presented morphism $\Sigma \rightarrow S$ such that for every geometric point $s \in S$ the fiber $\Sigma_s$ is a Cartier divisor of $C_s$. The flatness over $S$ is essential, and in fact we can use Corollary 1.5 of \cite{Knu} to prove that the image of $\Sigma$ through a contraction $c:C \rightarrow C'$ over $S$ is still flat over $S$ and satisfy base change. In particular $c(\Sigma)$ is a Cartier divisor of $C'$ over $S$ because of \Cref{rem:cart-div}.
 
 Moreover, because $\Sigma$ restricted to every non-trivial fiber of $c$ is at most of degree $1$, this implies that $R^1c_*\cO(-\Sigma)=0$ and in particular $Rc_*\cO(-\Sigma)\simeq\cO(-c(\Sigma))$ as complexes of quasi-coherent modules over $C'$.
\end{remark}

We are finally ready to state the unicity result for the minimal contractions. Firstly, we have to prove the Main Lemma from Knutsen's work in our generality (see Lemma 1.6 of \cite{Knu}). 

\begin{lemma}\label{lem:main-lemma}
 Let $c:C \rightarrow C'$ be a minimal contraction of a $A_r$-semistable pair $(C,\Sigma)$ over $S$ and denote by $\Sigma'$ the image $c(\Sigma)\subset C'$. Then for every $k\geq 1$
 \begin{itemize}
  \item[(a)] there are canonical isomorphisms (commute with base change over $S$)
  $$ c^*(\omega_{C'}(\Sigma')^{\otimes k}) \simeq \omega_{C}(\Sigma)^{\otimes k}$$
  and 
   $$ \omega_{C'}(\Sigma')^{\otimes k} \simeq c_*(\omega_{C}(\Sigma)^{\otimes k});$$
   \item[(b)] $ R^1c_*(\omega_{C}(\Sigma)^{\otimes k})=0$;
   \item[(c)] if we denote by $\pi:C\rightarrow S$ and $\pi':C' \rightarrow S$ the two structural morphisms, then $R^i\pi_*\omega_{C}(\Sigma)^{\otimes k}\simeq R^i\pi'_*\omega_{C'}(\Sigma')^{\otimes k}$ for $i=0,1$. 
 \end{itemize}

\end{lemma}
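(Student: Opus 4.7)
I reduce each assertion to a fiberwise statement over geometric points of $S$ via flatness and cohomology and base change, and combine three ingredients. The first is a direct image computation for the structure sheaf: I claim $c_*\cO_C\simeq \cO_{C'}$ and $R^1c_*\cO_C=0$. Both identities can be checked on geometric fibers, where the only non-trivial fibers of $c_s:C_s\to C'_s$ are the contracted components of types (1)--(3) from \Cref{rem:poss-contrac}, each isomorphic to $\PP^1$; so $R^1(c_s)_*\cO_{C_s}=0$ since $H^1(\PP^1,\cO)=0$. The equality $(c_s)_*\cO_{C_s}=\cO_{C'_s}$ at the image point $q=c_s(\Gamma_0)$ translates, at the level of complete local rings, to the statement that $\widehat{\cO}_{C'_s,q}$ is the subring of $\widehat{\cO}_{C_s,p}$ consisting of formal functions that are constant along the contracted $\Gamma_0$. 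In the rational tail and rational bridge cases this is Knutsen's calculation; the new case of the almost-bridge is handled by the explicit description in \Cref{cor:contrac-field} and \Cref{rem:expl-contrac}.

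Next I construct the canonical isomorphism $c^*\omega_{C'/S}(\Sigma')^{\otimes k}\simeq \omega_{C/S}(\Sigma)^{\otimes k}$. Over the open dense complement of the exceptional locus, the morphism $c$ is an isomorphism and the two line bundles are canonically identified there. This identification extends to a morphism of line bundles on all of $C$, which I verify to be an isomorphism by checking fiberwise: on a contracted $\Gamma_0\subset C_s$, the left hand side is trivial because $\Gamma_0$ is contracted to a point, while the right hand side restricts to $\omega_{C_s}(\Sigma_s)\vert_{\Gamma_0}\simeq \cO_{\PP^1}$ since $a_{\Gamma_0}=0$. A morphism of line bundles on a flat proper family that is a fiberwise isomorphism is an isomorphism.

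The remaining assertions are now formal. Combining the pullback isomorphism with the projection formula and the previous step yields
\[ c_*\omega_C(\Sigma)^{\otimes k}\simeq \omega_{C'}(\Sigma')^{\otimes k}\otimes c_*\cO_C\simeq \omega_{C'}(\Sigma')^{\otimes k}, \]
\[ R^1c_*\omega_C(\Sigma)^{\otimes k}\simeq \omega_{C'}(\Sigma')^{\otimes k}\otimes R^1c_*\cO_C=0, \]
which establish (a) and (b). Part (c) follows from the Leray spectral sequence $R^i\pi'_*R^jc_*(-)\Rightarrow R^{i+j}\pi_*(-)$, which degenerates thanks to (b); compatibility with arbitrary base change on $S$ is guaranteed by the fiberwise vanishing of $R^1$ and the standard cohomology-and-base-change machinery for proper flat morphisms.

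The only genuinely new input beyond Knutsen's argument is the first step for the almost-bridge case: one must check from the explicit local form in \Cref{rem:expl-contrac} that $k[[x,y]]/(y^2-x^3)$ is exactly the subring of $k[[x,y]]/(y^2-x^4)$ consisting of formal functions that are constant along the branch parametrising the contracted $\PP^1$. Once this local verification is in place, every other claim reduces to the formal manipulations above.
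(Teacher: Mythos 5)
There is a genuine gap at the heart of your argument, in the construction of the canonical morphism $c^*\omega_{C'}(\Sigma')\to\omega_C(\Sigma)$. You observe that the two line bundles are canonically identified over the open locus $U=C\setminus\Gamma_0$ where $c$ is an isomorphism, and then assert that ``this identification extends to a morphism of line bundles on all of $C$.'' This is exactly the non-trivial point, and it is not automatic: $\Gamma_0$ is an entire irreducible component of the fiber, not a codimension-two locus, so a section of the line bundle $\curshom_C(c^*\omega_{C'}(\Sigma'),\omega_C(\Sigma))$ over $U$ has no a priori reason to extend across $\Gamma_0$ --- one must first extend it over the attaching node or tacnode along the branches in $\overline{U}$, and then match it with a section on $\Gamma_0\simeq\PP^1$ satisfying the gluing condition at the singular point (which, for a tacnode, involves first-order data). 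Moreover one needs this extension to exist canonically and compatibly with arbitrary base change on $S$. The paper's proof devotes essentially all of its length to precisely this step: it applies Grothendieck duality to show
$$\pi_*\curshom_C(c^*(\omega_{C'}(\Sigma')),\omega_{C}(\Sigma)) \simeq \cO_S,$$
and defines $\varphi_c$ as the image of $1\in\H^0(S,\cO_S)$. Your proposal contains no substitute for this computation.

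A second, related defect is in your fiberwise verification that the morphism is an isomorphism. Knowing that $c^*\omega_{C'}(\Sigma')\vert_{\Gamma_0}$ and $\omega_C(\Sigma)\vert_{\Gamma_0}$ are both isomorphic to $\cO_{\PP^1}$ does not make a \emph{given} morphism between them an isomorphism --- a map $\cO_{\PP^1}\to\cO_{\PP^1}$ is multiplication by a scalar that could vanish, and more importantly the morphism must be shown to be an isomorphism at the singular attaching point of $C_s$, which is a surjectivity statement for a map of rank-one modules over a nodal or tacnodal local ring. This is the \'etale-local computation the paper performs using \Cref{rem:expl-contrac}, and it is needed for $\varphi_c$ itself, not only for the identity $c_*\cO_C\simeq\cO_{C'}$ where you do acknowledge the almost-bridge case. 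Your treatment of (b) and (c) via the projection formula, $R^1c_*\cO_C=0$, and the Leray spectral sequence is fine and agrees in substance with the paper's reduction of (b) and (c) to (a).
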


\begin{proof}
 Because $\omega_{C}(\Sigma)^{\otimes k}$ restricted to the non-trivial fibers of $c$ is non-canonically isomorphic to $\cO_{\PP^1}$, both (b) and (c) follows from (a) and Corollary 1.5 of \cite{Knu}. Notice also that if we have the canonical isomorphism 
 $$ c^*(\omega_{C'}(\Sigma')^{\otimes k}) \simeq \omega_{C}(\Sigma)^{\otimes k}$$
 we can then pushforward it through $c$ and precomposing it with the counit of the adjunction $c^* \dashv c_*$. However, if $\cF$ is a locally free sheaf on $C'$, we have that 
 $$ \cF \longrightarrow c_*c^*\cF\simeq \cF \otimes c_*\cO_C$$ 
 thanks to projection formula. We get that the counit of the adjunction is an isomorphism for locally free sheaves because the geometric fiber of $c$ are connected and reduced. 
 
 Finally, we have reduced to construct the canonical isomorphism 
 $$c^*(\omega_{C'}(\Sigma')^{\otimes k}) \simeq \omega_{C}(\Sigma)^{\otimes k},$$
 and, because tensor and pullback commute, it is enough to construct it for $k=1$. 
 
 We start by constructing a canonical morphism and then we prove it is an isomorphism. We claim that 
 $$ \pi_*\curshom_C(c^*(\omega_{C'}(\Sigma')),\omega_{C}(\Sigma)) \simeq \cO_S;$$ 
 if it is true, we can choose $1 \in \H^0(S,\cO_S)$ to be our canonical morphism which will be denoted by $\varphi_c:c^*(\omega_{C'}(\Sigma'))\rightarrow \omega_{C}(\Sigma)$. 
 
 First of all, Grothendieck duality gives us an isomorphism  
 $$ R\pi_*R\curshom_C(Lc^*(\omega_{C'}(\Sigma'))(-\Sigma),\omega_{C}[1]) \simeq R\curshom_S(R\pi_*(Lc^*(\omega_{C'}(\Sigma'))(-\Sigma)),\cO_S).$$
 
 Let us study the two complexes. Notice that $Lc^*(\omega_{C'}(\Sigma'))$ is isomorphic to $\omega_{C'}(\Sigma')$ as a complex concentrated in degree $0$. Therefore the left hand side is a two-term complex concentrated in $[-1,0]$ such that 
  $$\H^{-1}\Big(R\pi_*R\curshom_C(Lc^*(\omega_{C'}(\Sigma'))(-\Sigma),\omega_{C}[1]\Big)\simeq \pi_*\curshom_C(c^*(\omega_{C'}(\Sigma')),\omega_{C}(\Sigma))$$ 
 while 
  $$ \H^0\Big(R\pi_*R\curshom_C(Lc^*(\omega_{C'}(\Sigma'))(-\Sigma),\omega_{C}[1]\Big) \simeq R^1\pi_*\curshom_C(c^*(\omega_{C'}(\Sigma')),\omega_{C}(\Sigma)).$$
Regarding the right hand side, first of all we have that 
  $$Rc_*(c^*(\omega_{C'}(\Sigma'))(-\Sigma)) \simeq \omega_{C'}(\Sigma') \otimes^{\LL} Rc_*\cO(-\Sigma) \simeq \omega_{C'}$$
  where the first isomorphism is the (derived) projection formula and the second isomorphism follows from \Cref{rem:cart-div-fam}. Because $R\pi_*\simeq R\pi'_* \circ Rc_*$, one easily get that the left hand side is also a complex concentrated in $[-1,0]$ as expected and
  $$ \H^{-1}\Big(R\curshom_S(R\pi_*(Lc^*(\omega_{C'}(\Sigma'))(-\Sigma)),\cO_S)\Big) \simeq \cO_S$$
  while 
  $$ \H^{0}\Big(R\curshom_S(R\pi_*(Lc^*(\omega_{C'}(\Sigma'))(-\Sigma)),\cO_S)\Big) \simeq \pi'_*\omega_{C'}.$$
  This proves the claim. 
  
  By construction, if $c$ is an isomorphism in a point $p \in C$ then $\varphi_c$ is an isomorphism in $p$ as well. Clearly, it is enough to prove that $\varphi_c$ is surjective and in particular we can reduce to prove it restricting the morphism to the geometric fibers of $C\rightarrow S$. This follows from \Cref{rem:expl-contrac} and an \'etale-local computation.
\end{proof}

\begin{proposition}\label{prop:unique-contrac}
 Let $(C,\Sigma)$ be a $A_r$-semistable pair over $S$. Then there exists a unique minimal contraction up to unique isomorphism, i.e. if $c_1:C \rightarrow C_1$ and $c_2:C \rightarrow C_2$ are two minimal contractions, then there exists a unique $f:C_1\rightarrow C_2$ which makes the following diagram
 $$
\begin{tikzcd}
                    & C \arrow[ld, "c_1"'] \arrow[rd, "c_2"] &     \\
C_1 \arrow[rr, "f"] &                                        & C_2
\end{tikzcd}
$$ 
commute.
\end{proposition}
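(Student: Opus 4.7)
The plan is to realize both $C_1$ and $C_2$ as the scheme-theoretic image of one and the same canonical morphism $\phi\colon C\arr \PP_S(\cE)$ to a projective bundle over $S$. The two key inputs are \Cref{lem:main-lemma}, which lets us canonically identify the pushforwards of powers of $\omega_C(\Sigma)$ with those of $\omega_{C_i}(\Sigma_i)$, and \Cref{prop:boundedness}, which gives vanishing of $\H^1$, global generation, and normal generation of $\omega_C(\Sigma)^{\otimes 4}$ on every geometric fiber.

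Set $L\eqdef \omega_C(\Sigma)$, $\Sigma_i\eqdef c_i(\Sigma)$, $L_i\eqdef \omega_{C_i}(\Sigma_i)$, and write $\pi\colon C\arr S$, $\pi_i\colon C_i\arr S$ for the structural morphisms. \Cref{prop:boundedness}(i) gives $\H^1(C_s,L^{\otimes 4}\rest{C_s})=0$ on every geometric fiber, so by cohomology and base change $\pi_*L^{\otimes 4}$ is locally free and its formation commutes with arbitrary base change on $S$; the same applies to $\pi_{i*}L_i^{\otimes 4}$, and \Cref{lem:main-lemma}(c) produces a canonical isomorphism $\cE\eqdef \pi_*L^{\otimes 4}\simeq \pi_{i*}L_i^{\otimes 4}$. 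Global generation on fibers (\Cref{prop:boundedness}(ii)) then makes $\pi^*\cE\arr L^{\otimes 4}$ surjective, yielding a morphism $\phi\colon C\arr \PP_S(\cE)$ over $S$. On the other hand $L_i^{\otimes 4}$ is ample on fibers by $A_r$-stability of $(C_i,\Sigma_i)$ (see \Cref{rem:cart-div}) and normally generated on fibers by \Cref{prop:boundedness}(iii), hence relatively very ample; we thus obtain a closed immersion $\iota_i\colon C_i\into \PP_S(\cE)$.

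The canonical isomorphism $c_i^*L_i^{\otimes 4}\simeq L^{\otimes 4}$ of \Cref{lem:main-lemma}(a) shows that $\iota_i\circ c_i\colon C\arr \PP_S(\cE)$ is the morphism associated to $\pi^*\cE\arr L^{\otimes 4}$, i.e. $\iota_1\circ c_1=\phi=\iota_2\circ c_2$. Consequently the scheme-theoretic image of $\phi$ is simultaneously $\iota_1(C_1)$ and $\iota_2(C_2)$, and since each $\iota_i$ is a closed immersion we obtain a canonical isomorphism $f\colon C_1\xrightarrow{\sim} C_2$ over $S$ satisfying $\iota_2\circ f=\iota_1$, hence $f\circ c_1=c_2$.

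For uniqueness, the argument in the proof of \Cref{lem:main-lemma} applied to $\cF=\cO_{C_1}$ shows that the counit $\cO_{C_1}\arr c_{1*}\cO_C$ is an isomorphism. Together with the properness and surjectivity of $c_1$, this makes $c_1$ an epimorphism in the category of $S$-schemes, so any morphism $f'\colon C_1\arr C_2$ with $f'\circ c_1=c_2=f\circ c_1$ must coincide with $f$. The main technical point throughout is the canonicity and base-change compatibility of the isomorphism in \Cref{lem:main-lemma}(a), which is precisely what allows us to identify $\iota_1\circ c_1$ with $\iota_2\circ c_2$ on the nose.
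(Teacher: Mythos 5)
Your proposal is correct and follows essentially the same route as the paper: both arguments identify each minimal contraction with the scheme-theoretic image of the canonical morphism $C\arr \PP_S(\pi_*\omega_C(\Sigma)^{\otimes 4})$, using \Cref{lem:main-lemma} to transfer pushforwards from $C$ to $C_i$ and \Cref{prop:boundedness} for the very-ampleness of $\omega_{C_i}(\Sigma_i)^{\otimes 4}$, with uniqueness coming from $c_1$ being an epimorphism. Your write-up simply spells out in more detail the base-change compatibility and the closed-immersion step that the paper leaves implicit.
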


\begin{proof}
 The unicity of $f$ follows from the fact that a contraction is an epimorphism. Let us focus on the existence part. In particular, we will prove that any minimal contraction is isomorphic to the one defined in \Cref{cor:contrac-field}. 
 
 Consider a minimal contraction $c:C\rightarrow C'$ and recall that $(C',\omega_{C'}(\Sigma'))$ is a $A_r$-stable pair, where $\Sigma'=c(\Sigma)$ (see \Cref{rem:cart-div}). \Cref{prop:boundedness} implies that 
 $$ C' \simeq \proj_S \Big(\bigoplus_{n \in \NN} \pi'_*(\omega_{C'}(\Sigma')^{\otimes 4n})\Big) \hookrightarrow \PP(\pi'_*\omega_{C'}(\Sigma')^{\otimes 4})$$ 
 where $\pi':C' \rightarrow S$ is the structural morphism. Moreover, \Cref{lem:main-lemma} gives us the following isomorphism 
 $$ \pi_*\omega_{C}(\Sigma)^{\otimes k}\simeq \pi'_*\omega_{C'}(\Sigma')^{\otimes k}$$
 which clearly implies that $C'$ coincides with the contraction defined in \Cref{cor:contrac-field}, i.e. the image of the morphism 
 $$ C\longrightarrow \PP(\pi_*\omega_C(\Sigma)^{\otimes 4}).$$
 The image of such morphism is a $A_r$-prestable curve over $S$ because it is flat over $S$ thanks to \Cref{prop:boundedness} and the geometric fibers have been described in \Cref{cor:contrac-field}.
\end{proof}

We end up this section with a technical lemma. It helps us dealing with the case when we have two $A_r$-semistable pairs that contracts to the same curve.

\begin{lemma}\label{lem:contrac-unique}
 Let $(C_1,\Sigma_1)$ and $(C_2,\Sigma_2)$ be two $A_r$-semistable pairs over an algebraically closed field such that $(C_1,\Sigma_1)$ is not $A_r$-stable. Suppose we have the following diagram
 $$
 \begin{tikzcd}
C_1 \arrow[rd, "c_1"] \arrow[rr, "g"] &    & C_2 \arrow[ld, "c_2"'] \\
& C' &                       
\end{tikzcd}
$$
 such that 
 \begin{itemize}
  \item $c_1$, $c_2$ are minimal contractions;
  \item if $\Gamma_1$ is the semistable component of $C_1$, then $g\vert_{\Gamma_1}:\Gamma_1 \rightarrow C_2$ is a closed embedding.
 \end{itemize}
Then $g$ is an isomorphism. 
\end{lemma}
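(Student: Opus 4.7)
The plan is to show that $g$ is a finite morphism satisfying $g_{*}\cO_{C_1} \simeq \cO_{C_2}$, which together force $g$ to be an isomorphism via the relation $C_1 \simeq \underline{\mathrm{Spec}}_{C_2}(g_{*}\cO_{C_1})$.

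First I would rule out the case that $(C_2,\Sigma_2)$ is $A_r$-stable: if $c_2$ were an isomorphism then $g \simeq c_2^{-1}\circ c_1$ would contract $\Gamma_1$ to a point, contradicting the hypothesis that $g\vert_{\Gamma_1}$ is a closed embedding of a one-dimensional scheme. Hence $C_2$ has a (unique) semistable component $\Gamma_2$, which is contracted by $c_2$ onto $p':=c_1(\Gamma_1)$. From $c_2\circ g=c_1$ one deduces $g(\Gamma_1)\subseteq c_2^{-1}(p')\simeq\Gamma_2$, and since $g\vert_{\Gamma_1}$ is a closed embedding of $\PP^1$ into $\PP^1$ it must be an isomorphism onto $\Gamma_2$. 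On the open complements, the commutativity together with the fact that $c_i\vert_{C_i\setminus\Gamma_i}$ is an isomorphism onto $C'\setminus\{p'\}$ for $i=1,2$ forces $g\vert_{C_1\setminus\Gamma_1}\colon C_1\setminus\Gamma_1\xrightarrow{\sim}C_2\setminus\Gamma_2$. A short continuity check matches the gluing points $\Gamma_1\cap\overline{C_1\setminus\Gamma_1}$ bijectively with $\Gamma_2\cap\overline{C_2\setminus\Gamma_2}$, so that $g$ is bijective on closed points. Combined with the properness of $C_1$, this makes $g$ proper with finite fibres, hence finite.

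To conclude, I would analyse the comorphism $\phi\colon\cO_{C_2}\to g_{*}\cO_{C_1}$. Dominance of $g$ on every irreducible component makes $\phi$ injective, and its cokernel $\cQ$ is a skyscraper sheaf of finite length supported on the finite set where $g$ fails to be a local isomorphism. From the same arguments used in the proof of \Cref{lem:main-lemma} I would extract $c_{1*}\cO_{C_1}\simeq\cO_{C'}\simeq c_{2*}\cO_{C_2}$ together with the vanishing $R^1c_{2*}\cO_{C_2}=0$, both of which hold because the only nontrivial fibre of $c_2$ is $\Gamma_2\simeq\PP^1$, on which $\cO_{C_2}$ restricts to $\cO_{\PP^1}$ (globally generated with vanishing $\H^1$). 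Pushing the short exact sequence $0\to\cO_{C_2}\to g_{*}\cO_{C_1}\to\cQ\to 0$ forward by $c_{2*}$, the first map becomes the canonical isomorphism $\cO_{C'}\xrightarrow{\sim}\cO_{C'}$ identifying $c_{2*}\cO_{C_2}$ with $c_{2*}g_{*}\cO_{C_1}=c_{1*}\cO_{C_1}$; the long exact sequence then yields $c_{2*}\cQ=0$. Since $\cQ$ is a skyscraper and $c_2$ is proper, $c_{2*}\cQ$ has the same total length as $\cQ$, so $\cQ=0$ and $\phi$ is an isomorphism. Finiteness of $g$ then gives $C_1\simeq\underline{\mathrm{Spec}}_{C_2}(g_{*}\cO_{C_1})\simeq\underline{\mathrm{Spec}}_{C_2}\cO_{C_2}=C_2$ via $g$, as required.

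The main obstacle I foresee is the cohomological step, specifically verifying $R^1c_{2*}\cO_{C_2}=0$ and that the induced map $c_{2*}\cO_{C_2}\to c_{2*}g_{*}\cO_{C_1}$ truly is the canonical identification under the isomorphisms with $\cO_{C'}$. These are not literally the statements of \Cref{lem:main-lemma} (which deals with $\omega_{C_2}(\Sigma_2)^{\otimes k}$ for $k\geq 1$), but they follow from the same reasoning applied to the trivial line bundle; the bookkeeping requires some care. Everything else reduces to clean formal consequences and avoids a case-by-case \'etale-local analysis of the three singularity types appearing in \Cref{rem:poss-contrac}.
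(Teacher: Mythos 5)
Your argument is correct, and its first half coincides with the paper's: both proofs establish that $g$ restricts to an isomorphism $\Gamma_1 \xrightarrow{\sim} \Gamma_2$ and to an isomorphism $C_1\setminus\Gamma_1 \xrightarrow{\sim} C_2\setminus\Gamma_2$ (your preliminary step ruling out the case that $c_2$ is an isomorphism is a detail the paper leaves implicit in its appeal to \Cref{rem:poss-contrac}). Where you diverge is the conclusion. The paper finishes with ``a local computation'': one checks at each attaching point of $\Gamma_1$ that the map on complete local rings is an isomorphism, which in practice means running through the three configurations of \Cref{rem:poss-contrac} (node of a tail, nodes of a bridge, tacnode of an almost-bridge). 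You instead globalize: having shown $g$ is proper, quasi-finite, hence finite, you reduce to $\cO_{C_2}\xrightarrow{\sim} g_*\cO_{C_1}$ and detect the skyscraper cokernel $\cQ$ by pushing forward along $c_2$, using $c_{1*}\cO_{C_1}\simeq\cO_{C'}\simeq c_{2*}\cO_{C_2}$ and $R^1c_{2*}\cO_{C_2}=0$. These two inputs are not literally \Cref{lem:main-lemma}, as you note, but they are exactly the $\cF=\cO_{C'}$ instance of the adjunction-counit argument given in its proof together with Knudsen's Corollary 1.5 applied to $\cO_{C_2}|_{\Gamma_2}\simeq\cO_{\PP^1}$, so they are fully available in the paper's framework. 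The trade-off is clear: your route avoids the case-by-case \'etale-local analysis entirely and is uniform in the singularity type of the attaching locus, at the cost of importing slightly more cohomological machinery; the paper's route is shorter to state but hides the casework in the phrase ``local computation''. Both are sound.
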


\begin{proof}
    First of all, notice that by \Cref{rem:poss-contrac} we have that $C_1$ and $C_2$ have the same type of semistable components ($\Gamma_1$ and $\Gamma_2$ respectively) and clearly $g(\Gamma_1)\subset\Gamma_2$. Because $g\vert_{\Gamma_1}$ is a closed embedding, then it is clear that $g\vert_{\Gamma_1}: \Gamma_1 \rightarrow \Gamma_2$ is an isomorphism. In the same way, we have that $g$ restricted to $C_1\setminus \Gamma_1$ is an isomorphism with $C_2\setminus \Gamma_2$.  The statement follows easily from a local computation.
\end{proof}

\section{Moduli stack of $A_r$-stable curves}\label{sec:2}
Let $g$ be an integer with $g\geq 2$, $r$ be a nonnegative integer and $\kappa$ be a base field with characteristic greater than $r+1$.

We denote by $\Mtilde_{g}^r$ the category defined in the following way: an object is a proper flat finitely presented morphism $C\arr S$ over $\kappa$ such that every geometric fiber over $S$ is a $A_r$-stable curve of genus $g$. These families are called \emph{$A_r$-stable curves} over $S$. Morphisms are defined in the usual way. This is clearly a fibered category over the category of schemes over $\kappa$. 

Fix a positive integer $n$. In the same way, we can define $\Mtilde_{g,n}^r$ the fibered category whose objects are the datum of $A_r$-stable curves over $S$ with $n$ distinct sections $p_1,\dots,p_n$ such that every geometric fiber over $S$ is a  $n$-pointed $A_r$-stable curve. These families are called \emph{$n$-pointed $A_r$-stable curves} over $S$. Morphisms are just morphisms of $n$-pointed curves.

The main resultx of this section are the description of $\Mtilde_{g,n}^r$ as a quotient stack and the description of the contraction morphism. Firstly, we need to prove two results which are classical in the case of ($A_1$-)stable curves.

\begin{proposition}\label{prop:openness}
	Let $C \arr S$ a proper flat finitely presented morphism with $n$-sections $s_i:S \arr C$ for $i=1,\dots,n$. There exists an open subscheme $S' \subseteq S$ with the property that a morphism $T \arr S$ factors through $S'$ if and only if the projection $T\times_{S} C \arr T$, with the sections induced by the $s_{i}$, is a $n$-pointed $A_r$-stable curve.
\end{proposition}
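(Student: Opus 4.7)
The plan is to show that the locus in $S$ over which the fibers (with the induced sections) are $n$-pointed $A_r$-stable is an open subscheme; since this condition is clearly stable under base change, such an open $S'$ automatically has the required universal property. I would decompose the stability condition into the following fibrewise properties and check that each defines an open subscheme of $S$:
\begin{enumerate1}
\item the geometric fibers are proper, reduced, connected, and $1$-dimensional;
\item the geometric fibers have at worst $A_h$-singularities for some $h\leq r$;
\item the $n$ sections land in the relative smooth locus and are pairwise disjoint;
\item the line bundle $\omega_{C/S}(s_1+\dots+s_n)$ is ample on every geometric fiber.
\end{enumerate1}
The intersection of the corresponding open subsets is then the desired $S'$.

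Openness of (1) is standard: connectedness of geometric fibers of a proper flat finitely presented morphism is open (compute $h^0$ of the structure sheaf and use cohomology and base change), and the locus where the fibers are reduced of pure dimension $1$ is open by the generic flatness / constructibility arguments of \textbf{EGA} IV. For (3), the smooth locus $C^{\mathrm{sm}}\subset C$ is open, so $s_i^{-1}(C^{\mathrm{sm}})\subset S$ is open; moreover, since $C\arr S$ is separated, the equalizer of $s_i$ and $s_j$ is closed in $S$, so its complement is open. For (4), fibrewise ampleness in a proper flat family is open on the base by the classical result of \textbf{EGA} III.

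The real content is (2). Once (1) holds, the fibers are Gorenstein curves whose singularities are hypersurface singularities; in particular, being a (planar) hypersurface singularity makes the morphism $C\arr S$ a local complete intersection on an open neighbourhood. After restricting to this open, I would invoke upper semicontinuity of the Tjurina (equivalently Milnor) number for flat families of isolated hypersurface singularities: if a fiber $C_s$ has at $p$ a singularity worse than $A_r$, then the Tjurina number at $p$ is $>r$, and this strict inequality persists in a neighbourhood of $p$ in $C$, producing a closed condition on $S$. Equivalently, one can argue by the adjacency of simple singularities: the miniversal deformation of an $A_h$-singularity has all nearby fibers of type $A_{h'}$ with $h'\leq h$, so the locus where \emph{every} singular point of the fiber is of type $A_{\leq r}$ is stable under generization. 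Either way, the complement is closed, giving the desired openness.

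The main obstacle is this semicontinuity step in (2): one has to make sure that, uniformly in the possibly unbounded singular locus, the Tjurina number being $\leq r$ everywhere is an open condition, rather than just a pointwise open condition. This is handled by noting that the singular locus of $C\arr S$ is proper over $S$ (it is closed in the proper scheme $C$), so the set of $s\in S$ admitting some singular point of Tjurina number $>r$ is the image under the proper projection of the closed locus $\{(p,s)\colon \dim_{\kappa(s)}(\cO_{C_s,p}/\mathrm{Jac})>r\}\subset C$, which is therefore closed in $S$. Combining the openness of (1)--(4) yields the required open $S'$.
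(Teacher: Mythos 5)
Your overall strategy coincides with the paper's: decompose stability into fibrewise conditions, show each is open, and use properness of $C\arr S$ to pass from an open condition on $C$ to one on $S$. The paper disposes of the singularity condition in one line by invoking that small deformations of curves with $A_{\le r}$-singularities again have only $A_{\le r}$-singularities, which is your ``adjacency'' alternative; your handling of (1), (3), (4) and of the properness step is fine and, if anything, more explicit than the paper's.

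However, your primary mechanism for (2) has a genuine gap. First, condition (1) does not make the fibers Gorenstein with hypersurface singularities: a reduced connected proper curve can have points of embedding dimension $\ge 3$ (e.g.\ the union of the three coordinate axes in $\AA^3$), where the Jacobian/Tjurina formalism as you use it does not apply. Second, even at planar points the implication ``not of type $A_{\le r}$ $\Rightarrow$ $\tau>r$'' is false: an ordinary triple point ($D_4$, equation $x^3-xy^2$) has $\tau=\mu=4$, so for $r\ge 4$ it satisfies $\tau\le r$ without being an $A_h$-singularity for any $h$. Consequently the closed locus $\{\dim(\cO_{C_s,p}/\mathrm{Jac})>r\}$ in your final paragraph misses bad points, and its complement is strictly larger than the locus you want. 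The fix is either to first impose multiplicity $\le 2$ on the fibers (an upper semicontinuous, hence open, condition, which for a reduced Cohen--Macaulay curve also forces embedding dimension $\le 2$); a reduced planar double point is automatically of type $A_h$ with $h=\tau$, so $\tau\le r$ then finishes the job. Or else run all of step (2) through openness of versality together with the ADE adjacency statement (the miniversal deformation of an $A_h$-singularity exhibits only $A_{h'}$ with $h'\le h$), which shows directly that the good locus is open in $C$; combined with your properness argument this gives openness on $S$, and is exactly the route the paper takes.
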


\begin{proof}
	It is well known that a small deformation of a curve with $A_h$-singularities for $h\leq r$ still has $A_h$-singularities for $h\leq r$. Hence, after restricting to an open subscheme of $S$ we can assume that $C \arr S$ is an $A_{r}$-prestable curve. By further restricting $S$ we can assume that the sections land in the smooth locus of $C \arr S$, and are disjoint. Then the result follows from openness of ampleness for invertible sheaves. 
\end{proof}

We are ready to prove the theorem. To be precise, both the proofs of the following theorem and of \Cref{prop:openness} are an adaptation of Theorem 1.3 and of Proposition 1.2 of \cite{DiLorPerVis} to the more general case of $A_r$-stable curves. 

\begin{theorem}\label{theo:descr-quot}
	$\Mtilde_{g,n}^r$ is a smooth connected algebraic stack of finite type over $\kappa$. Furthermore, it is a quotient stack: that is, there exists a smooth quasi-projective scheme X with an action of $\GL_N$ for some positive $N$, such that 
	$ \Mtilde_{g,n}^r \simeq [X/\GL_N]$. 
\end{theorem}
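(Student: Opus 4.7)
The plan is to adapt the classical Gieseker-style construction for $\Mbar_{g,n}$ via Hilbert schemes, leveraging the very ampleness results of \Cref{prop:boundedness}. Fix an integer $m \geq 4$, so that by \Cref{prop:boundedness} the sheaf $\omega_C(\Sigma)^{\otimes m}$ is globally and normally generated, and very ample (its degree on every component being positive) for every $n$-pointed $A_r$-stable curve $(C,\Sigma)$. Riemann--Roch together with the vanishing $\H^1(C,\omega_C(\Sigma)^{\otimes m}) = 0$ gives $\dim \H^0(C,\omega_C(\Sigma)^{\otimes m}) = N$ with $N := (2m-1)(g-1)+mn$, and the Hilbert polynomial of the $m$-canonically embedded curve is independent of the fiber.

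Let $H$ denote the Hilbert scheme of closed subschemes of $\PP^{N-1}_{\kappa}$ with this Hilbert polynomial, and set $H' := H \times_{\kappa} (\PP^{N-1})^n$. Define $X \subseteq H'$ to be the locally closed locus where the universal family $\cC \arr H'$, equipped with the $n$ universal sections, is an $n$-pointed $A_r$-stable curve and where the restriction of $\cO_{\PP^{N-1}}(1)$ to $\cC$ coincides with $\omega_{\cC/H'}(\Sigma)^{\otimes m}$ via an $m$-canonical embedding. The first condition is open by \Cref{prop:openness}; the second is open by cohomology and base change. The group $\GL_N$ acts on $X$ by change of coordinates on $\PP^{N-1}$.

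For a family $(\pi \colon C \arr S,\, p_1,\ldots,p_n)$ in $\Mtilde_{g,n}^r$, \Cref{prop:boundedness} together with cohomology and base change (applicable because $R^1\pi_*\omega_{C/S}(\Sigma)^{\otimes m} = 0$) yields that $\pi_*\omega_{C/S}(\Sigma)^{\otimes m}$ is locally free of rank $N$ and its formation commutes with arbitrary base change. The associated $\GL_N$-frame bundle parametrizes isomorphisms $\cO_S^N \simeq \pi_* \omega_{C/S}(\Sigma)^{\otimes m}$, equivalently $m$-canonical closed embeddings $C \into \PP^{N-1}_S$, and hence morphisms $S \arr X$ by the universal property of $H$. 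This exhibits $X \arr \Mtilde_{g,n}^r$ as a $\GL_N$-torsor and therefore identifies $\Mtilde_{g,n}^r$ with $[X/\GL_N]$, in particular an algebraic stack of finite type over $\kappa$.

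It remains to verify smoothness and connectedness. An $A_r$-stable curve is l.c.i.\ of pure dimension one, so deformations of the pointed pair are controlled by $\ext^i(\Omega^1_C(\Sigma),\cO_C)$ with obstructions in $\ext^2$. The local-to-global spectral sequence, combined with the vanishing of $\H^2$ on a curve and with the unobstructedness of the planar hypersurface singularities $y^2 = x^{h+1}$ (whose versal deformations $y^2 = x^{h+1} + \sum_{i=0}^{h-1} a_i x^i$ are explicit and smooth over affine space), forces the obstruction group to vanish; hence $X$ and therefore $\Mtilde_{g,n}^r$ are smooth. For connectedness, $\Mbar_{g,n}$ is naturally the open substack of $\Mtilde_{g,n}^r$ on which every singularity is a node, and every $A_r$-stable curve specializes in the above versal family from a purely nodal one, so every irreducible component of the smooth stack $\Mtilde_{g,n}^r$ meets the classically irreducible $\Mbar_{g,n}$, forcing $\Mtilde_{g,n}^r$ to be irreducible and thus connected. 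The principal technical obstacle is the smoothness verification --- that is, the global vanishing of $\ext^2$ --- which nevertheless reduces to the standard unobstructedness of isolated planar hypersurface singularities.
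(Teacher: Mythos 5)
Your overall strategy is the same as the paper's (Hilbert scheme of $m$-canonically embedded curves, openness from \Cref{prop:openness}, unobstructedness of $A_r$-singularities for smoothness, density of the classical locus for connectedness), but there is one genuine error in the step that produces the $\GL_N$-presentation. You define $X$ as a locally closed subscheme of $H\times_\kappa(\PP^{N-1})^n$, so $X$ parametrizes \emph{embedded} curves, and then assert that an isomorphism $\cO_S^N\simeq\pi_*\omega_{C/S}(\Sigma)^{\otimes m}$ is ``equivalently'' an $m$-canonical closed embedding $C\into\PP^{N-1}_S$, concluding that $X\to\Mtilde_{g,n}^r$ is a $\GL_N$-torsor. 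These two pieces of data are not equivalent: a point of the Hilbert scheme only remembers the image subscheme, and recovering a frame of $\pi_*\omega_{C/S}(\Sigma)^{\otimes m}$ from it requires in addition a choice of isomorphism $\cO_C(1)\simeq\omega_{C/S}(\Sigma)^{\otimes m}$, which is a torsor under $\H^0(C,\cO_C)^{\times}=\gm$. Hence the frame bundle is a $\gm$-torsor over your $X$, the map $X\to\Mtilde_{g,n}^r$ is a $\PGL_N$-torsor rather than a $\GL_N$-torsor, and $[X/\GL_N]$ (with the central $\gm$ acting trivially) is a $\gm$-gerbe over $\Mtilde_{g,n}^r$, not $\Mtilde_{g,n}^r$ itself.

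The fix is exactly the extra layer in the paper's proof: take $X$ to be the moduli of pointed curves together with an actual trivialization $\cO_S^N\simeq\pi_*\omega_{C/S}(\Sigma)^{\otimes m}$; this is an honest $\GL_N$-torsor over $\Mtilde_{g,n}^r$, and it is a quasi-projective scheme because the forgetful map to your Hilbert-scheme locus (the paper's $Z$) is a $\gm$-torsor, hence representable and affine, and $Z$ is quasi-projective. Two smaller points. First, the condition that $\cO_{\PP^{N-1}}(1)\vert_{\cC}$ agree with $\omega_{\cC/H'}(\Sigma)^{\otimes m}$ is not open: it is locally closed, cut out as the equalizer of the two corresponding sections of the relative Picard scheme, as the paper does; since you only need $X$ to be locally closed, this is harmless. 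Second, the paper uses $m=3$ (remarking that very ampleness still holds there) while you use $m\geq4$, which is fine and is directly covered by \Cref{prop:boundedness}. Your smoothness and connectedness arguments are correct; for connectedness you deform to a nodal curve and invoke irreducibility of $\Mbar_{g,n}$, where the paper deforms to a smooth curve and uses connectedness of $\cM_{g,n}$ --- both work.
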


\begin{proof}
	It follows from \Cref{prop:boundedness} that if $(\pi\colon C \arr S,\Sigma_1,\dots,\Sigma_n)$ is a $n$-pointed $A_{r}$-stable curve of genus $g$, then $\pi_{*}\omega_{C/S}(\Sigma_{1}+ \dots + \Sigma_{n})^{\otimes 3}$ is locally free sheaf of rank $N:=5g-5+3n$, and its formation commutes with base change, because of Grothendieck's base change theorem. 
	
	Call $X$ the stack over $\kappa$, whose sections over a scheme $S$ consist of a $n$-pointed  $A_{r}$-stable curve as above, and an isomorphism $\cO_{S}^{N} \simeq \pi_{*}\omega_{C/S}(\Sigma_{1}+ \dots + \Sigma_{n})^{\otimes m}$ of sheaves of $\cO_{S}$-modules. Since $\pi_{*}\omega_{C/S}(\Sigma_{1}+ \dots + \Sigma_{n})^{\otimes m}$ is very ample , the automorphism group of an object of $X$ is trivial, and $X$ is equivalent to its functor of isomorphism classes. We proved the line bundle is very ample for $m\geq 4$, but it is still true for $m=3$. Clearly, the same prove also gives us the result for $n=4$, except that $N$ increases.
	
	Call $H$ the Hilbert scheme of subschemes of $\PP^{N-1}_{\kappa}$ with Hilbert polynomial $P(t)$, and $D \arr H$ the universal family. Call $F$ the fiber product of $n$ copies of $D$ over $S$, and $C \arr F$ the pullback of $D \arr H$ to $F$; there are $n$ tautological sections $s_{1}$, \dots,~$s_{n}\colon F \arr C$. Consider the largest open subscheme $F'$ of $F$ such that the restriction $C'$ of $C$, with the restrictions of the $n$ tautological sections, is a $n$-pointed $A_{r}$-stable curve, as in Proposition~\ref{prop:openness}. Call $Y \subseteq F'$ the open subscheme whose points are those for which the corresponding curve is nondegenerate, $E \arr Y$ the restriction of the universal family, $\Sigma_{1}$, \dots,~$\Sigma_{n} \subseteq E$ the tautological sections. Call $\cO_{E}(1)$ the restriction of $\cO_{\PP^{N-1}_{Y}}(1)$ via the tautological embedding $E \subseteq \PP^{N-1}_{Y}$; there are two section of the projection $\pic_{E/Y}^{3(2g-2 + n)}\arr Y$ from the Picard scheme parametrizing invertible sheaves of degree $3(2g-2 + n)$, one defined by $\cO_{E}(1)$, the other by $\omega_{E/Y}(\Sigma_{1} + \dots + \Sigma_{n})^{\otimes 3}$; let $Z \subseteq Y$ the equalizer of these two sections, which is a locally closed subscheme of $Y$.
	
	Then $Z$ is a quasi-projective scheme over $\kappa$ representing the functor sending a scheme $S$ into the isomorphism class of tuples consisting of a $n$-pointed  $A_{r}$-stable curve $\pi\colon C \arr S$, together with an isomorphism of $S$-schemes
	\[
	\PP^{N-1}_{S} \simeq \PP(\pi_{*}\omega_{C/S}(\Sigma_{1} + \dots + \Sigma_{n})^{\otimes 3})\,.
	\]
	There is an obvious functor $X \arr Z$, associating with an isomorphism $\cO_{S}^{N} \simeq \pi_{*}\omega_{C/S}(\Sigma_{1}+ \dots + \Sigma_{n})^{\otimes 3}$ its projectivization. It is immediate to check that $X \arr Z$ is a $\gm$-torsor; hence it is representable and affine, and $X$ is a quasi-projective scheme over $\spec \kappa$.
	
	On the other hand, there is an obvious morphism $X \arr \mt_{g,n}^r$ which forgets the isomorphism $\cO_{S}^{N} \simeq \pi_{*}\omega_{C/S}(\Sigma_{1}+ \dots + \Sigma_{n})^{\otimes 3}$; this is immediately seen to be a $\GL_{N}$ torsor. We deduce that $\mt_{g,n}^r$ is isomorphic to $[X/\GL_{N}]$. This shows that is a quotient stack, as in the last statement; this implies that $\mt_{g,n}^r$ is an algebraic stack of finite type over $\kappa$.
	
	The fact that $\mt_{g,n}^r$ is smooth follows from the fact that deformations of $A_{r}$-prestable curves are unobstructed.
	
	Finally, to check that $\mt_{g,n}^r$ is connected it is enough to check that the open embedding $\cM_{g,n} \subseteq \mt_{g,n}^r$ has a dense image, since $\cM_{g,n}$ is well known to be connected. This is equivalent to saying that every $n$-pointed $A_r$-stable curve over an algebraically closed extension $\Omega$ of $\kappa$ has a small deformation that is smooth. Let $(C, p_{1}, \dots, p_{n})$ be a $n$-pointed $A_r$-stable curve; the singularities of $C$ are unobstructed, so we can choose a lifting $\overline{C}\arr \spec \Omega\ds{t}$, with smooth generic fiber. The points $p_{i}$ lift to sections $\spec\Omega\ds{t} \arr \overline{C}$, and then the result follows from  Proposition~\ref{prop:openness}.
	
\end{proof}
\begin{remark}\label{rem: max-sing}
	Clearly, we have an open embedding $\Mtilde_{g,n}^r  \subset \Mtilde_{g,n}^s$ for every $r\leq s$. Notice that $\Mtilde_{g,n}^r=\Mtilde_{g,n}^{2g+1}$ for every $r\geq 2g+1$. 
\end{remark}

We prove that the usual definition of Hodge bundle extends to our setting.  As a consequence we obtain a locally free sheaf $\HH_{g}$ of rank~$g$ on $\mt_{g, n}^r$, which is called \emph{Hodge bundle}.

\begin{proposition}\label{prop:hodge-bundle}
	Let $\pi\colon C \arr S$ be a $n$-pointed $A_r$-stable of genus $g$. Then $\pi_{*}{\omega}_{C/S}$ is a locally free sheaf of rank $g$ on $S$, and its formation commutes with base change.
\end{proposition}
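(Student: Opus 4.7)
The plan is to apply the cohomology and base change theorem to the line bundle $\omega_{C/S}$, once we have checked that the fiberwise cohomology dimensions are constant. Since $A_r$-stable curves are local complete intersections, the relative dualizing complex is a line bundle $\omega_{C/S}$ whose formation commutes with base change, and on each geometric fiber $C_s$ it restricts to the dualizing sheaf $\omega_{C_s}$. The relative dimension is $1$, so $R^i\pi_{*}\omega_{C/S}$ vanishes for $i\geq 2$ automatically.

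The key input is that the fiberwise dimensions of $\mathrm{H}^{0}$ and $\mathrm{H}^{1}$ of $\omega_{C_s}$ are constant in $s$. By Serre duality on the projective l.c.i.\ curve $C_s$ one has
\[
\mathrm{h}^{1}(C_s,\omega_{C_s}) = \mathrm{h}^{0}(C_s,\cO_{C_s}) = 1,
\]
where the last equality uses that $C_s$ is reduced and connected. On the other hand $\mathrm{h}^{0}(C_s,\omega_{C_s})$ is, by the very definition of arithmetic genus, equal to $g$. Thus both $\mathrm{h}^{0}$ and $\mathrm{h}^{1}$ are locally constant on $S$.

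Now I would feed this into the standard cohomology and base change theorem (e.g.\ in the form of Grauert's criterion). Since $R^{2}\pi_{*}\omega_{C/S}=0$, the base change map $\phi^{1}(s)\colon R^{1}\pi_{*}\omega_{C/S}\otimes k(s)\to \mathrm{H}^{1}(C_s,\omega_{C_s})$ is surjective for every $s$, hence an isomorphism; combined with the constancy of $\mathrm{h}^{1}$ this implies that $R^{1}\pi_{*}\omega_{C/S}$ is locally free of rank $1$ and its formation commutes with base change. Then $\phi^{0}(s)$ is surjective for all $s$, and the constancy of $\mathrm{h}^{0}$ implies, by the same theorem, that $\pi_{*}\omega_{C/S}$ is locally free of rank $g$ and commutes with base change.

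There is no real obstacle: all the work is in verifying that $\omega_{C/S}$ behaves as expected, and this is immediate from the fact that $A_r$-stable curves are l.c.i.\ with proper, flat, geometrically reduced and connected fibers. The argument is parallel to the classical proof for $\Mbar_{g,n}$ and makes no use of the value of $r$.
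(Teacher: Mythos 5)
Your argument reproduces the first half of the paper's proof, but it has a genuine gap for non-reduced bases. The step where you pass from ``$\mathrm h^0$ and $\mathrm h^1$ are constant on the fibers'' to ``$R^1\pi_*\omega_{C/S}$ (and then $\pi_*\omega_{C/S}$) is locally free and compatible with base change'' is exactly Grauert's criterion, and Grauert's criterion requires $S$ to be reduced. Over a non-reduced base a coherent sheaf can have constant fiber dimension without being locally free, and the cohomology-and-base-change formalism by itself only gives you the following: $\phi^1(s)$ is surjective (it is the top-degree map), hence an isomorphism; $R^1\pi_*\omega_{C/S}$ is locally free near $s$ if and only if $\phi^0(s)$ is surjective; and $\pi_*\omega_{C/S}$ is locally free near $s$ if and only if $\phi^0(s)$ is surjective (since $\phi^{-1}$ is vacuously surjective). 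Constancy of the fiber dimensions does not break this circle without reducedness, so as written your proof establishes the statement only for reduced $S$. Since the proposition is quantified over arbitrary $S$ --- and is needed on non-reduced test schemes to define the Hodge bundle on the stack --- this is not a cosmetic point; it is precisely the issue the paper's proof spends its second half addressing.

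The paper closes the gap by a deformation-theoretic d\'evissage: the versal deformation space of an $A_r$-stable curve over a field is smooth, so \'etale-locally on $S$ the family $C\arr S$ is pulled back from a family over a smooth, hence reduced, base; there Grauert applies, and both local freeness and compatibility with base change are inherited by pullback. Alternatively, you could avoid reducedness altogether by first proving $R^1\pi_*\omega_{C/S}\simeq\cO_S$ directly via the trace map of Grothendieck duality (using $\pi_*\cO_C=\cO_S$, which holds because the geometric fibers are reduced and connected); this forces $\phi^0(s)$ to be surjective for every $s$, after which the standard theorem yields everything you want. Either supplement would complete your argument.
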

\begin{proof}
	If $C$ is an $A_r$-stable curve of genus $g$ over a field $k$, the dimension of $\H^{0}(C, \omega_{C/k})$ is $g$; so the result follows from Grauert's theorem when $S$ is reduced. But the versal deformation space of an $A_{r}$-stable curve over a field is smooth, so every $A_{r}$-stable curve comes, \'{e}tale-locally, from an $A_{r}$-stable curve over a reduced scheme, and this proves the result.
\end{proof}

Finally, we describe the contraction morphism. Luckily, most of the work is done in \Cref{sec:1}.

\begin{theorem}\label{theo:contrac}
	We have a morphism of algebraic stacks 
	$$ \gamma:\Mtilde_{g,n+1}^r \longrightarrow \Ctilde_{g,n}^r$$
	where $\Ctilde_{g,n}^r$ is the universal curve of $\Mtilde_{g,n}^r$. Furthermore, it is an open immersion and its image is the open locus $\Ctilde_{g,n}^{r,\leq 2}$ in $\Ctilde_{g,n}^r$ parametrizing $n$-pointed $A_r$-stable curves $(C,p_1,\dots,p_n)$ of genus $g$ and a (non-necessarily smooth) section $q$ such that $q$ is an $A_h$-singularity for $h\leq 2$.
\end{theorem}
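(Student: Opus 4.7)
My plan is to construct $\gamma$ using the minimal contraction of Proposition~\ref{prop:unique-contrac}, to identify its image via the classification of minimal contractions in Remark~\ref{rem:expl-contrac}, and then to show $\gamma$ is an open immersion by arguing it is a representable unramified monomorphism between two smooth algebraic stacks of the same dimension $3g-3+n+1$.

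First, to define $\gamma$, I would forget $p_{n+1}$ and contract: given $(\pi\colon C\to S, p_1,\dots,p_{n+1})$, set $\Sigma := p_1+\dots+p_n$ and check fibrewise that $(C,\Sigma)$ is $A_r$-semistable. On each geometric fibre the degree $a_\Gamma=\deg\omega_C(\Sigma)|_\Gamma$ differs from the stable degree $a'_\Gamma=\deg\omega_C(\Sigma+p_{n+1})|_\Gamma\geq 1$ by exactly $\#(p_{n+1}\cap\Gamma)\in\{0,1\}$, hence $a_\Gamma\geq 0$ with equality only on the unique component containing $p_{n+1}$, and only when $a'_\Gamma=1$. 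Proposition~\ref{prop:unique-contrac} then yields a minimal contraction $c\colon C\to C'$; by Remark~\ref{rem:cart-div}, $(C',c_*\Sigma)$ is an $A_r$-stable $n$-pointed curve, and together with the section $q:=c\circ p_{n+1}$ defines an object of $\Ctilde_{g,n}^r$. By Remark~\ref{rem:poss-contrac} any semistable component must be a rational tail, a rational bridge, or an almost-bridge, and Remark~\ref{rem:expl-contrac} says these contract respectively to a smooth point, a node, or a cusp, so $q$ is an $A_h$-singularity with $h\leq 2$ and the image of $\gamma$ lies in $\Ctilde_{g,n}^{r,\leq 2}$.

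Next, I would establish the dimensional/smoothness setup and the monomorphism property. The source is smooth of dimension $3g-3+n+1$ by Theorem~\ref{theo:descr-quot}. The target is smooth of the same dimension: $\Mtilde_{g,n}^r$ is smooth of dimension $3g-3+n$ by Theorem~\ref{theo:descr-quot}, and its universal curve has smooth total space since, étale-locally near an $A_h$-singularity, the universal family has equation $y^2=x^{h+1}+\sum_{i<h} t_i x^i$, whose total space is smooth thanks to the free parameter $t_0$. To show $\gamma$ is a monomorphism I would reconstruct the source uniquely from the target: given $(C',p_1',\dots,p_n',q)\in\Ctilde_{g,n}^{r,\leq 2}$, the insertion of the appropriate rational tail, bridge, or almost-bridge is determined by the singularity type of $q$ via the local normal form $k\ds{x,y}/(y^2-x^{h+1})\hookrightarrow k\ds{x,y}/(y^2-x^{2(h+1)})$ of Remark~\ref{rem:expl-contrac}, and the inserted $\PP^1$ carries three rigidifying special points (the attaching data plus $p_{n+1}$, and also the marked point $p_i$ in the case $q=p_i'$), so the preimage is unique over each geometric point; Lemma~\ref{lem:contrac-unique} upgrades this uniqueness to families. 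Since monomorphisms of algebraic stacks are representable, $\gamma$ is representable.

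The main obstacle I anticipate is verifying that $\gamma$ is unramified. I plan to check this by an étale-local tangent-space comparison at each of the three singularity types $h\in\{0,1,2\}$: first-order deformations of the inserted rational tail/bridge/almost-bridge inside $(C,p_1,\dots,p_{n+1})$ correspond bijectively to first-order deformations of $(C',q)$ (allowing the $A_h$-singularity to smooth). The hypothesis $\cha\kappa>r+1$ guarantees that the versal deformation of $y^2=x^{h+1}$ is well-behaved and that the induced map on tangent spaces is injective. Once $\gamma$ is known to be a representable unramified monomorphism between smooth algebraic stacks of equal dimension, it is automatically flat and hence an open immersion; combined with the image identification, this completes the proof.
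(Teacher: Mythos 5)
Your construction of $\gamma$ (fibrewise semistability of $(C,p_1+\dots+p_n)$, minimal contraction via \Cref{prop:unique-contrac}, image landing in $\Ctilde_{g,n}^{r,\leq 2}$ via the classification of semistable components) is exactly the paper's first step. Where you diverge is the open-immersion part: the paper constructs an explicit quasi-inverse $\eta$ on $\Ctilde_{g,n}^{r,\leq 2}$ by blowing up the universal curve along the union of the loci $q_{n+1}\cap q_i$ and the locus where $q_{n+1}$ is singular, proves flatness of this blowup (the center factors through the universal section, so the powers of its ideal are flat), and identifies the special fibres by a deformation-theoretic computation at the cusp reduced to a DVR; then $\gamma\circ\eta\simeq\id$ and $\id\Rightarrow\eta\circ\gamma$ are compared using \Cref{prop:unique-contrac}, the universal property of the blowup, and \Cref{lem:contrac-unique}. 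Your proposed route --- representable unramified universally injective morphism, hence monomorphism, then miracle flatness between smooth stacks of equal dimension $3g-2+n$, hence open immersion --- is genuinely different and, if completed, would bypass the blowup construction entirely; it trades an explicit inverse for a tangent-space computation.

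However, as written there are two real gaps. First, the unramifiedness is only announced (``I plan to check this''), and it is the crux of your argument: without injectivity of $\gamma$ on tangent spaces nothing closes, since universal injectivity on geometric points alone does not give a monomorphism over non-reduced bases. At the almost-bridge stratum this is a nontrivial comparison between first-order deformations of a tacnodally attached pointed $\PP^1$ and first-order deformations of the cusp $(C',q)$ --- essentially the same local computation the paper performs for the blowup fibre, so it cannot be waved away. Second, the claim that ``\Cref{lem:contrac-unique} upgrades this uniqueness to families'' is not what that lemma provides: it is a statement over an algebraically closed field, it presupposes the existence of the comparison map $g$ and the hypothesis that $g\vert_{\Gamma_1}$ is a closed embedding, and it says nothing about infinitesimal or general bases. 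The correct way to rescue your strategy is to invoke the criterion that a representable morphism locally of finite type which is universally injective and unramified is a monomorphism; but then you must establish representability of $\gamma$ directly (injectivity on automorphism groups, which your ``three rigidifying special points'' remark does give once spelled out, including the tacnodal case where the attaching datum is a point together with its first-order neighbourhood) rather than deducing it from the monomorphism property, which is circular in the order you present it --- and you are again thrown back on the unproved unramifiedness.
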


\begin{proof}
	Let $(C,p_1,\dots,p_{n+1})$ be a $n+1$-pointed $A_r$-stable curve over $S$. If we denote by $\Sigma$ the Cartier divisor $\cO(p_1+\dots+p_n)$ on $C$, it is clear that $(C,\Sigma)$ is a $A_r$-semistable pair over $S$. Therefore we can construct the (minimal) contraction functor $\gamma$ thanks to \Cref{prop:unique-contrac} and it factors through the open $\Ctilde_{g,n}^{r,\leq 2}$ thanks to \Cref{cor:contrac-field}.
	
	We can construct an inverse to this map, namely a functor
	$$ \eta:\Ctilde_{g,n}^{r,\leq 2} \longrightarrow \Mtilde_{g,n+1}^r$$ 
	which is a quasi-inverse of $\gamma$. For the sake of notation, we denote by $\cC$ the stack $\Ctilde_{g,n}^{r,\leq 2}$. Consider the universal family of $\cD \rightarrow \cC$ of $\cC$ with universal sections $(q_1,\dots,q_n,q_{n+1})$. Furthermore, consider the closed substack $\Delta$ of $\cC$ parametrizing $(\tilde{C},q_1,\dots,q_n,q_{n+1})$ such that $q_{n+1}$ lands in the singular locus of the morphism $\tilde{C}\rightarrow S$. We can define its stacky structure using the $1$-st Fitting ideal of $\Omega_{\tilde{C}/S}$. 
	
	We denote by $E_i$ the closed substack $q_{n+1} \cap q_{i}$ of $\cD$ for $i=1,\dots,n$ and by $E_0$ the closed substack $q_{n+1}(\Delta)$. We have that the $E_i$'s are disjoint closed substacks of $\cD$ of codimension $2$ and we denote by $E$ the union. Finally, we denote by $\cB$ the blowup of $\cD$ with center $E$ and by $p_{i}$ the proper transform of $q_i$ for every $i=1,\dots,n+1$. We need to prove that $(\cB,p_1,\dots,p_{n+1})$ is a $n+1$-pointed $A_r$-stable curve over $\cC$. In fact, we are going to prove that the blowup morphism $\cB \rightarrow \cD$ is a (minimal) contraction for the semistable pair $(\cB,\Sigma)$ where $\Sigma\simeq \cO(p_1+\dots+p_n)$.
	
	We have that $\cB$ is a proper, finitely presented morphism over $\cC$. To prove that $\cB$ is flat over $\cC$, we notice that we have a factorization of the inclusion of the center of the blowup inside $\cD$
	$$
	\begin{tikzcd}
    E \arrow[r, hook] & \cC \arrow[r, "q_{n+1}", hook] & \cD
    \end{tikzcd}
    $$
	where $E$ is a Cartier divisor inside $\cC$. If we denote by $I_{E}$ the ideal defining $E$ in $\cD$, that we have that $I_{E}^{\otimes m}$ is flat over $\cC$ and therefore $\cB:=\proj_{\cD}\Big(\bigoplus_{m} I_{E}^{\otimes m} \Big)$ is also flat over $\cC$. 
	
	We need to prove that the geometric fibers of $\cB \rightarrow \cC$ are $A_r$-prestable curves. The stability condition then follows easily. For simplicity, we will prove the case $n=0$ but the same proof works for any $n$. Moreover, we will explicitly do the computation in a neighborhood of a geometric object in $\cC$ of the form $(\tilde{C},q)$ where $q$ is a cusp because it is the only non-classical case. The same idea can be applied to the other cases.
	
	As usual, deformation theory tells us that we can consider a smooth neighborhood $(\spec A,m_q)$ of $(\widetilde{C},q)$ in $\cC$ with $A$ a smooth algebra. This implies that we have an $A_r$-stable curve
	$$
	\begin{tikzcd}
		\widetilde{C}_A \arrow[r] & \spec A \arrow[l, "q_A"', bend right]
	\end{tikzcd}
	$$
	with $q_A$ a global section such that $q_A \otimes_A A/m_q = q$. By deformation theory of cusps (see Example 6.2.12 of \cite{TalVis}), we know that the completion of the local ring of $q$ in $\widetilde{C}_A$ is of the form
	$$ A[[x,y]]/(y^2-x^3-r_1x-r_2)$$
	where $r_1,r_2$ are part of a system of parameters for $A$ because $(\spec A, m_q)$ is versal. Consider know the element $f:=4r_1^3+27r_2^2 \in A$, which parametrizes the locus when the section $q_A$ ends in a singular point, which is either a node or a cusp. Let $q_1:\spec A/f \into \widetilde{C}_A$ be the codimension two closed immersion and denote by $C_A:={\rm Bl}_{q_1}\widetilde{C}$. We want to prove that the geometric fiber of $C_A$ over $m_q$ is an almost-bridge.
	
	Notice that the formation of the blowup does not commute with arbitrary base change. However, consider the system of parameters $(r_1,r_2,\dots,r_a)$ with $a:=\dim A=3g-2$. Then if we denote by $J:=(r_2,\dots,r_a)$ we have that 
	$$ \tor_1^A(A/f^n,A/J)=0$$
	for every $n$. This implies that the blowup commutes with the base change for $A/J$ and therefore we can reduce to the case when $A$ is a DVR, $r:=r_1$ is the uniformizer and the completion of the local ring of $q$ in $\widetilde{C}_A$ is of the form 
	$$ A[[x,y]]/(y^2-x^3-rx).$$ 
	A standard blowup computation proves that over the special fiber of $A$ we have
	\begin{itemize}
		\item $C_A\otimes_A A/m_q$ is an $A_r$-prestable curve and if we denote by $p_A$ the proper transform of $q_A$, we have that $(C_A \otimes_A A/m_q, p_A \otimes_A A/m_q)$ is an $1$-pointed $A_r$-stable curve of genus $g$,  
		\item $C_A \rightarrow \widetilde{C}_A \otimes_A A/m_q$ is the contraction of an almost-bridge.
    \end{itemize}  
	
	We have finally proved the existence of the two functors $\gamma$ and $\eta$. \Cref{prop:unique-contrac} assures us that there exists a unique isomorphism of functors $\gamma \circ \eta \simeq \id$. Regarding the other composition, the universal property of the blowup gives us that it exists a natural transformation $\id \Rightarrow \eta \circ \gamma $ and it is enough to prove that it is an isomorphism for every geometric point of $\Mtilde_{g,n+1}^r$. This follows from \Cref{lem:contrac-unique}
\end{proof}

\section{Preliminary results}\label{sec:3}
In this section, we prove several preliminary results that are used for the description of the locus parametrizing curves with $A_h$-singularities.

Firstly, we introduce the moduli stack parametrizing finite flat extensions of pointed finite flat curvilinear algebras and describe it as an affine bundle over a classifying stack.

Secondly, we discuss the behaviour of pushouts and blowups in families and we prove that the two constructions are one the inverse of the other under some strict conditions, which are satisfied in our situation.

Most of them are probably well-known to experts, but for lack of a better reference in the literature, we decided to write down the proofs.

\subsection{Extensions of finite curvilinear algebras}\label{sub:3.1}

We describe the moduli stack of finite flat extensions of curvilinear algebras.  

Let $m$ be a positive integer and $\kappa$ be the base field. We denote by $F\cH_m$ the finite Hilbert stack of a point, i.e. the stack whose objects are pairs $(A,S)$ where $S$ is a scheme and $A$ is a locally free finite $\cO_S$-algebra of degree $m$. We know that $F\cH_m$ is an algebraic stack, which is in fact a quotient stack of an affine scheme of finite type by a smooth algebraic group. For a more detailed treatment see Section 96.13 of \cite{StProj}. 

Given another positive integer $d$, we can consider a generalization of the stack $F\cH_m$. Given a morphism $\cX \rightarrow \cY$, one can consider the finite Hilbert stack $F\cH_d(\cX/\cY)$ which parametrizes commutative diagrams of the form
$$
\begin{tikzcd}
	Z \arrow[r] \arrow[d, "f"] & \cX \arrow[d] \\
	S \arrow[r]                & \cY          
\end{tikzcd}
$$
where $f$ is finite locally free of degree d. This again can be prove to be algebraic. If $\cY=\spec \kappa$, we denote it simply by $F\cH_d(\cX)$. For a more detailed treatment see Section 96.12 of \cite{StProj}.

Finally, we define $E\cH_{m,d}$ to be the fibered category in groupoids whose objects over a scheme $S$ are finite locally free extensions of $\cO_S$-algebras $A \into B$  of degree $d$ such that $A$ is a finite locally free $\cO_S$-algebra of degree $m$. Morphisms are defined in the obvious way. Clearly the algebra $B$ is finite locally free of degree $dm$. 

\begin{proposition}
	The stack $F\cH_{d}(F\cH_m)$ is naturally isomorphic to $E\cH_{m,d}$, therefore $E\cH_{m,d}$ is an algebraic stack.
\end{proposition}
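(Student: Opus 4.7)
The plan is to give an explicit equivalence of fibered categories using the affine equivalence between quasi-coherent $\cO_Z$-algebras and quasi-coherent $f_{\ast}\cO_Z$-algebras for an affine morphism $f\colon Z \arr S$, which preserves being locally free of a fixed rank.

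In one direction, I send an object $(f\colon Z \arr S, \cA)$ of $F\cH_{d}(F\cH_m)$, consisting of a finite locally free morphism of degree $d$ and a locally free $\cO_Z$-algebra of degree $m$, to the pair $(f_{\ast}\cO_Z \hookrightarrow f_{\ast}\cA)$ obtained by pushing forward the structural inclusion $\cO_Z \hookrightarrow \cA$. Since $f$ is finite, $f_{\ast}\cO_Z$ is a locally free $\cO_S$-algebra of degree $d$, and by the affine equivalence $f_{\ast}\cA$ is a locally free $f_{\ast}\cO_Z$-algebra of the same rank $m$ as $\cA$ itself. This is manifestly a finite locally free extension of $\cO_S$-algebras and matches the data parametrised by $E\cH_{m,d}$ up to identifying which index records which degree.

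In the opposite direction, given an extension $A \hookrightarrow B$ in $E\cH_{m,d}$, I take $f\colon Z := \spec_S A \arr S$, which is finite locally free, and let $\cA$ be the $\cO_Z$-algebra corresponding to $B$ under the affine equivalence (with its $A$-algebra structure). Functoriality on morphisms is immediate, and the two constructions are mutually quasi-inverse thanks to the standard natural isomorphisms $\spec_S f_{\ast}\cO_Z \simeq Z$ (for $f$ affine) and the affine equivalence $B \simeq f_{\ast}\cA$. Compatibility with base change along an arbitrary $T \arr S$ is ensured by the fact that pushforward along finite morphisms commutes with arbitrary base change on quasi-coherent sheaves, compatibly with algebra structures.

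I do not expect a genuine obstacle here: the proposition is in substance a repackaging of the affine equivalence in the language of these moduli stacks, and algebraicity of $E\cH_{m,d}$ drops out from the cited algebraicity of the finite Hilbert stack $F\cH_d(\cX)$ for algebraic $\cX$. The only mild bookkeeping is to verify the $2$-categorical coherence, i.e.\ that morphisms of extensions correspond precisely to morphisms in $F\cH_d(F\cH_m)$ under the above construction, which is direct from the functoriality of pushforward and $\spec_S$.
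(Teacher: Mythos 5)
Your argument is correct and is precisely the ``unwinding of definitions'' that the paper's one-line proof leaves implicit: the affine equivalence between locally free $\cO_Z$-algebras and locally free $f_*\cO_Z$-algebras for the finite locally free morphism $f\colon Z=\spec_S A\arr S$ is exactly the mechanism identifying the two stacks, and base-change compatibility holds because finite pushforward commutes with arbitrary base change. The only caveat, which you already flag, is the index bookkeeping: with the paper's conventions an object $(f\colon Z\arr S,\cA)$ with $\deg f=d$ and $\deg\cA=m$ yields a degree-$m$ extension of a degree-$d$ algebra, so the literal identification is with $F\cH_{m}(F\cH_{d})$ rather than $F\cH_{d}(F\cH_{m})$ --- a harmless relabeling that does not affect algebraicity.
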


\begin{proof}
	The proof follows from unwinding the definitions.
\end{proof}

We want to add the datum of a section of the structural morphism $\cO_S \into B$. This can be done passing to the universal object of $F\cH_{dm}$.

Let $n$ be a positive integer and let $B_{\rm univ}$ the universal object of $F\cH_{n}$; consider $\cF_{n}:=\spec_{F\cH_{n}}(B_{\rm univ})$ the generalized spectrum of the universal algebra over $F\cH_{n}$. It parametrizes pairs $(B,q)$ over $S$ where $B \in F\cH_{n}(S)$ and $q:B \rightarrow \cO_S$ is a section of the structural morphism $\cO_S \into B$. 
\begin{definition}
	We say that a pointed algebra $(A,p) \in \cF_n$ over an algebraically closed field $k$ is linear if $\dim_k m_p/m_p^2 \leq 1$, where $m_p$ is the maximal ideal associated to the section $p$. We say that $(A,p)$ is curvilinear if $(A,p)$ is linear and $\spec A=\{p\}$.
\end{definition}

We want to study the moduli stack parametrizing curvilinear pointed algebras of lenght $n$. This is locally closed inside $\cF_n$ and we endow it with a stacky structure in the following way.

We consider the closed substack defined by the $1$-st Fitting ideal of $\Omega_{A|\cO_S}$ in $\cF_{n}$. This locus parametrizes non-linear pointed algebras . Therefore we can just consider the open complement, which we denote by $\cF_{n}^{\rm lin}$. We can inductively define closed substacks of $\cF_{n}^{\rm lin}$ in the following way: suppose $\cS_h$ is defined, then we can consider $\cS_{h+1}$ to be the closed substack of $\cS_h$ defined by the $0$-th fitting ideal of $\Omega_{\cS_h|F\cH_{n}}$. We set $\cS_1=\cF_{n}^{\rm lin}$. It is easy to prove that the geometric points of $\cS_h$ are pairs $(A,p)$ such that $A$ localized at $p$ has length greater or equal than $h$. Notice that this construction stabilizes at $h=n$ and the geometric points of $\cS_h$ are exactly the curvilinear pointed algebras. Finally, we denote by $\cF_{n}^{c}:=\cS_{n}$ the last stratum. As it is a locally closed substack of an algebraic stacks of finite type, it is algebraic of finite type too. 

\begin{lemma}\label{lem:loc-triv-alg}
	If $(B,q) \in \cF_{n}^c(S)$ for some $\kappa$-scheme $S$, then it exists an \'etale cover $S'\rightarrow S$ such that $B\otimes_S S' \simeq \cO_{S'}[t]/(t^{n})$ and $q\otimes_S S'=q_0\otimes S'$ where
	$$q_0:\kappa[t]/(t^{n}) \longrightarrow \kappa$$
	is defined by the assocation $t\mapsto 0$.
\end{lemma}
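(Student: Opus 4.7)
The plan is to work étale-locally on $S$ and construct a concrete trivialization by producing a single element $t$ that plays the role of the formal variable. Let $I := \ker(q) \subset B$; since $q$ splits the structural map $\cO_S \into B$, we have a decomposition of $\cO_S$-modules $B \simeq \cO_S \oplus I$, so $I$ is a locally free $\cO_S$-module of rank $n-1$. Over a geometric point $s \in S$, the pair $(B_s, q_s)$ is curvilinear of length $n$: the algebra $B_s$ is local Artinian with residue field $k(s)$ and maximal ideal $I_s$, and by Nakayama applied fiberwise the dimension of $I_s/I_s^2$ is exactly $1$ while $I_s^n = 0$. In particular, an element $t_s \in I_s$ whose image in $I_s/I_s^2$ is non-zero generates $I_s$ and its successive powers $1, t_s, t_s^2, \dots, t_s^{n-1}$ form a $k(s)$-basis of $B_s$.

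The key step is to produce, after replacing $S$ by an étale cover $S' \to S$, an element $t \in \Gamma(S', I)$ that reduces to such a generator on every geometric fiber and additionally satisfies $t^n = 0$ in $B$. The scheme-theoretic definition of $\cF_n^c$ via the iterated Fitting-ideal stratification is set up precisely so that the relative conormal sheaf $I/I^2$ is a line bundle on $\cF_n^c$ and the associated graded ideals $I^k/I^{k+1}$ are line bundles with $I^n = 0$; equivalently, one may argue more abstractly by computing that the automorphism group scheme of the standard object $(\kappa[T]/(T^n),q_0)$, given by substitutions $T \mapsto \alpha_1 T + \alpha_2 T^2 + \dots + \alpha_{n-1} T^{n-1}$ with $\alpha_1 \in \gm$, is the smooth group $\gm \times \ga^{n-2}$, so that the atlas $\spec \kappa \to \cF_n^c$ corresponding to this object is smooth and surjective, and therefore admits étale-local sections over any $S \to \cF_n^c$. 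Either perspective yields the existence of $t$ étale-locally.

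Given such a $t$, define the $\cO_{S'}$-algebra homomorphism
\[
\phi : \cO_{S'}[T]/(T^n) \longrightarrow B \otimes_{\cO_S} \cO_{S'}, \qquad T \longmapsto t,
\]
which is well-defined because $t^n = 0$. Both source and target are locally free $\cO_{S'}$-modules of rank $n$, and on every geometric fiber $\phi$ sends the basis $1, T, \dots, T^{n-1}$ to the basis $1, t_s, \dots, t_s^{n-1}$; hence $\coker \phi$ has vanishing fibers and Nakayama gives $\coker \phi = 0$, so $\phi$ is a surjection between locally free sheaves of the same rank, hence an isomorphism. The compatibility $q_0 = q \circ \phi$ is immediate from $\phi(T) = t \in I = \ker q$, completing the proof.

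The main obstacle in carrying out this plan is justifying the key step, i.e.\ checking that the Fitting-ideal construction really does force $I/I^2$ to be an invertible sheaf on $\cF_n^c$ and $I^n$ to vanish in families (so that powers of a fiberwise generator behave uniformly); this is an essentially bookkeeping computation, but it is the whole point of introducing the stacky structure on $\cF_n^c$ and without it one cannot rule out nilpotent obstructions to globalizing the fiberwise trivialization.
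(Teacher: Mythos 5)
Your fiberwise classification and the concluding Nakayama argument are fine, and the first half matches the paper's proof (which likewise shows that over an algebraically closed field the surjection $k[[t]]\to B_{m_q}$ forces $(B,q)\simeq(k[t]/(t^{n}),q_0)$). The problem is the key step, which you yourself flag: producing, \'etale-locally on $S$, an element $t\in I$ that generates $I$ fiberwise and satisfies $t^n=0$. Neither justification you offer closes it. Option (a) --- that the Fitting-ideal stratification forces $I/I^2$ to be invertible and $I^n=0$ in families --- is precisely the assertion that needs proof; over a non-reduced base it is not automatic that fiberwise conditions on the lengths of the $I^k$ propagate to the family, and you do not carry out the ``bookkeeping.'' Option (b) is a non sequitur: smoothness of $\underaut\big(\kappa[t]/(t^{n}),q_0\big)$ does not imply that $\spec\kappa\to\cF_{n}^c$ is smooth. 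Compare $\cX=\spec\kappa[\epsilon]/(\epsilon^2)$: a single point with trivial (hence smooth) automorphism group, yet the inclusion of the reduced point is not smooth. What is missing is the statement that $\cF_{n}^c$ has no infinitesimal directions at its unique geometric point.

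That tangent-space computation is exactly what the paper supplies: any first-order deformation of $(k[t]/(t^{n}),q_0)$ inside $\cF_{n}^c$ is of the form $k[t,\epsilon]/(p(t,\epsilon))$ with $p(t,0)=t^{n}$ and section $t\mapsto b\epsilon$, and the curvilinearity conditions force $p(t,\epsilon)=(t-b\epsilon)^{n}$, so the deformation is trivial. One geometric point together with vanishing tangent space then yields, by the standard deformation-theoretic argument (the hull is a quotient of a power series ring on $T^1=0$), that $\spec\kappa\to\cF_{n}^c$ is smooth and surjective; this is your option (b) made correct, and from there your explicit map $T\mapsto t$ and the Nakayama/rank argument finish the proof cleanly. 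So the skeleton of your argument is sound and close to the paper's, but the deformation computation is the actual content of the lemma and cannot be bypassed by appealing to the smoothness of the automorphism group.
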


\begin{proof}
	We are going to prove that $\cF_{n}^c$ has only one geometric point (up to isomorphism) and its tangent space is trivial. Thus the thesis follows, by a standard argument in deformation theory.  
	
	Suppose then $S=\spec k$ is the spectrum of an algebraically closed field and $B$ is a finite $k$-algebra (of degree $n$). Because $(B,q)$ is linear, we have that $\dim_k(m_q/m_q^2)\leq 1$. We can construct then a surjective morphism 
	$$ k[[t]] \longrightarrow B_{m_q}$$
	whose kernel is generated by the monomial $t^{n'}$. We have then $(B,q)\simeq (k[t]/(t^{n'}),q_0)$ for some positive integer $n'$. Because $B$ is local of length $n$, we get that $n'=n$.

	Let us study the tangent space of $\cF_n^c$ at the pointed algebra $(k[t]/(t^{n}),q_0)$ where $k/\kappa$ is a field. We have that any deformation $(B_{\epsilon},q_{\epsilon})$ of the pair $(k[t]/(t^n),q_0)$ is of the form
	$$ k[t,\epsilon]/(p(t,\epsilon))$$
	where $p(t,0)=t^{n}$. Because the section is defined by the association $t\mapsto b\epsilon$, we have also that $p(b\epsilon,\epsilon)=p(0,\epsilon)=0$. It is easy to see that $(B_{\epsilon},q_{\epsilon}) \in \cF^c_{n}$ only if $$p(b\epsilon,\epsilon)=p'(b\epsilon,\epsilon)=\dots=p^{(n-1)}(b\epsilon,\epsilon)=0$$
	where the derivatives are done over $k[\epsilon]/(\epsilon^2)$, thus $p(t,\epsilon)=(t-b\epsilon)^{n}$. The algebra obtained is clearly isomorphic to trivial one.
\end{proof} 

\begin{remark}
 Notice that we do not need the field $k$ to be algebraically closed to prove that there is only one pointed curvilinear algebra of lenght $n$ over $k$. This is related to the fact that the automorphism group of the unique point is special and therefore the previous statement is true even Zariski-locally. 
\end{remark}

Let $G_{n}:=\underaut\Big(\kappa[t]/(t^{n}),q_0\Big)$ be the automorphism group of the standard pointed curvilinear algebra. One can describe $G$ as the semi-direct product of $\gm$ and a group $U$ which is isomorphic to an affine space of dimension $n-2$.

\begin{corollary}\label{cor:descr-finite-alg}
	We have that $\cF_{n}^c$ is isomorphic to $\cB G_{n}$, the classifying stack of the group $G_{n}$.
\end{corollary}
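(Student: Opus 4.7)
The plan is to exhibit $\cF_n^c$ as a neutral gerbe over $\spec\kappa$ banded by $G_n$, and then invoke the standard identification of such a gerbe with $BG_n$.

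First I would use the standard pointed curvilinear algebra $(\kappa[t]/(t^n), q_0)$ to define a canonical morphism
\[
\iota\colon \spec\kappa \longrightarrow \cF_n^c.
\]
The automorphism sheaf of this object over any $\kappa$-scheme $S$ is, by definition, $G_n \times_\kappa S$; thus the $2$-fiber product $\spec\kappa \times_{\cF_n^c} \spec\kappa$ represents the functor of isomorphisms between two copies of the trivial object, which is the group scheme $G_n$. So $\iota$ is a $G_n$-torsor over its image once we know it is representable and faithfully flat.

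Next I would use \Cref{lem:loc-triv-alg} to show that $\iota$ is in fact an \'etale cover: given any object $(B,q) \in \cF_n^c(S)$, the lemma produces an \'etale surjection $S' \to S$ together with an isomorphism of $(B,q)\otimes_S S'$ with the pullback of $(\kappa[t]/(t^n), q_0)$. This provides, for each $S$-point of $\cF_n^c$, an \'etale cover along which the pullback factors through $\iota$. Hence $\iota$ is representable by \'etale surjections (it is a $G_n$-torsor in the \'etale topology, since $G_n$, being the semidirect product of $\gm$ and an affine space $U\simeq \AA^{n-2}$, is smooth).

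Finally I would conclude by the standard dictionary between neutral gerbes and classifying stacks: a morphism $\spec\kappa \to \cX$ from a point to an algebraic stack which is a $G$-torsor with $G$ a smooth affine group scheme induces an equivalence $\cX \simeq BG$. Applied here, this yields $\cF_n^c \simeq BG_n$. The only subtle point I would need to double check is that $G_n$ really is representable by a smooth affine group scheme of finite type over $\kappa$, which follows from its explicit description as $\gm \ltimes U$ with $U \simeq \AA^{n-2}$; given this, there is no real obstacle, as the main content (local triviality) is already contained in the preceding lemma.
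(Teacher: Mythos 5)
Your proposal is correct and follows exactly the route the paper intends: the corollary is stated as an immediate consequence of \Cref{lem:loc-triv-alg}, whose content (\'etale-local triviality of every object, uniqueness of the geometric point, automorphism group $G_n$) is precisely what you use to identify $\cF_n^c$ with the neutral gerbe $\cB G_n$. The details you fill in (the atlas $\spec\kappa\to\cF_n^c$, the $2$-fiber product computing $G_n$, smoothness of $G_n=\gm\ltimes U$) are the standard ones and are all consistent with the paper.
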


We denote by $\cE_{m,d}^c$ the fiber product $E\cH_{m,d}\times_{F\cH_{dm}}\cF_{dm}^c$. We get the morphism of algebraic stacks
$$ \cE_{m,d}^c \longrightarrow \cF_{dm}^c$$ 
defined by the association $(S,A\into B \rightarrow \cO_S) \mapsto (S,B\rightarrow \cO_S)$. Notice that the morphism is faithful, therefore representable by algebraic spaces.  Clearly the composite $A\rightarrow \cO_S$ is still a section. 

We now study the stack $\cE_{m,d}^c$.

\begin{lemma}\label{lem:triv-ext}
	If $(A\into B,q) \in \cE_{m,d}^c(S)$ for some scheme $k$-scheme $S$, then it exists an \'etale cover $\pi:S'\rightarrow S$ such that 
	$$\pi^*\Big(A\into B,q\Big) \simeq \Big(\phi_d:\cO_{S'}[t]/(t^{m})\into \cO_{S'}[t]/(t^{dm}),q_0\otimes_{S}S'\Big)$$
	where $\phi_d(t)=t^dp(t)$ with $p(0) \in \cO_{S'}^{\times}$.
\end{lemma}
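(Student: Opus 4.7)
The plan is to apply Lemma~\ref{lem:loc-triv-alg} twice, first to trivialize $B$ and then $A$, and then to read off the form of the inclusion $\phi$ from flatness. Since $(B,q) \in \cF_{dm}^{c}(S)$, after passing to an étale cover $\pi:S' \to S$ I may assume $B \simeq \cO_{S'}[u]/(u^{dm})$ and $q = q_{0}$. To apply the lemma to $A$, I first check $(A, q|_{A}) \in \cF_{m}^{c}(S)$: on each geometric fiber, $A|_{s}$ is a subring of the local curvilinear ring $B|_{s}$, hence contains no nontrivial idempotents and is itself local, and its embedding dimension is bounded by that of $B|_{s}$, so it is curvilinear of length $m$. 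A second application of Lemma~\ref{lem:loc-triv-alg}, refining $S'$ if necessary, then identifies $A \simeq \cO_{S'}[t]/(t^{m})$ with $q|_{A} = q_{0}$.

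In these trivializations, $\phi$ is encoded by the single element $f(u) := \phi(t) \in \cO_{S'}[u]/(u^{dm})$ with $f(0) = 0$, and the goal becomes to show $f(u) = u^{d} p(u)$ with $p(0) \in \cO_{S'}^{\times}$. Fiberwise, $B|_{s}/(f|_{s})$ has $\kappa(s)$-dimension $d$, and since the only ideals of $\kappa(s)[u]/(u^{dm})$ are the $(u^{i})$, necessarily $(f|_{s}) = (u^{d})$; in particular the coefficient of $u^{d}$ in $f$ is a unit in $\cO_{S'}$, and Nakayama then promotes $1, u, \ldots, u^{d-1}$ to an $A$-basis of $B$. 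Writing $u^{d} = \sum_{i=0}^{d-1} \alpha_{i}(t)\,u^{i}$ for unique $\alpha_{i} \in A$, multiplication by $u$ on $B$ in this basis is represented by the companion matrix of $g(x) := x^{d} - \sum \alpha_{i}(t)\,x^{i}$. Equating the characteristic polynomial of $\mu_{u}$ over $\cO_{S'}$, computed directly as $x^{dm}$ from the shift action of $u$ on $B$, with the norm $N_{A[x]/\cO_{S'}[x]}(g)$, yields the polynomial identity $(x^{d} - P_{0}(x))^{m} = x^{dm}$ in $\cO_{S'}[x]$, where $P_{0}(x) := \sum \alpha_{i}(0)\,x^{i}$. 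Since $m$ is invertible in $\cO_{S'}$ (automatic in the applications from the hypothesis that the characteristic of $\kappa$ exceeds $r+1$), this forces $P_{0} = 0$, so each $\alpha_{i}(0) = 0$. Writing $\alpha_{i} = t\,\gamma_{i}$ then gives $u^{d} = f(u)\,\Gamma(u)$ in $B$ for $\Gamma(u) := \sum_{i} \phi(\gamma_{i})\,u^{i}$; a fiber check shows that $\Gamma(0)$ is a unit, hence $\Gamma$ is invertible in $B$ and $f(u) = u^{d}\, p(u)$ with $p := \Gamma^{-1}$.

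The main obstacle I expect is precisely this last global step: the fiberwise identity $(f|_{s}) = (u^{d})$ only places the obstruction coefficients $\alpha_{i}(0)$ in the Jacobson radical of $\cO_{S'}$, which is insufficient to conclude their vanishing over a non-reduced base. The companion-matrix/norm computation is what upgrades the fiberwise statement to honest global vanishing, and it is precisely here that the characteristic hypothesis enters; the remaining pieces (the two étale trivializations, Nakayama for the $A$-basis, and the invertibility of $\Gamma$) are routine.
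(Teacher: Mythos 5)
Your route is genuinely different from the paper's, and in one respect more honest. The paper disposes of the normal form of the inclusion by asserting that any finite flat degree-$d$ extension of pointed algebras $\cO_S[t]/(t^m)\into\cO_S[t]/(t^{dm})$ is of the form $\phi_d$ (``an easy computation''), and spends its effort on the complementary claim that $B$ curvilinear and $A\into B$ finite flat force $A$ curvilinear, checked on geometric points and on first-order deformations via the Schlessinger functors $T^1$ --- exactly what is needed to rerun the formal-smoothness argument of \Cref{lem:loc-triv-alg}. You instead trivialize $B$ and $A$ separately and then actually prove the normal form: the identity $N_{A[x]/\cO_{S'}[x]}(g)=(x^d-P_0(x))^m$ is correct (multiplication by an element of $\cO_{S'}[t]/(t^m)$ is triangular Toeplitz, so the norm is the $m$-th power of the constant term), and your downward induction on the coefficients of $P_0$ does force $P_0=0$ once $m\in\cO_{S'}^{\times}$. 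This buys something real: the ``easy computation'' genuinely fails over a non-reduced base when $\cha\kappa$ divides $m$. For $\cha k=p=m$ the pointed map $t\mapsto \epsilon u+u^2$ over $k[\epsilon]/(\epsilon^2)$ is a well-defined finite flat degree-$2$ extension (since $(\epsilon u+u^2)^p=0$), yet it is not conjugate to any $u^2p(u)$, because every automorphism of $(B_0,q_0)$ preserves the nonvanishing of the linear coefficient while every element of the image of a map $t\mapsto u^2p(u)$ lies in $(u^2)$. So the invertibility of $m$ you flag is not an artifact of your method but a necessary hypothesis; it holds throughout the paper's applications, where $m\leq r<\cha\kappa$.

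Two steps need repair. First, your justification that $A|_s$ is curvilinear --- that a subring of $B|_s$ has embedding dimension bounded by that of $B|_s$ --- is false for subrings in general: $k[t^2,t^3]/(t^4)\subset k[t]/(t^4)$ has embedding dimension $2$. What saves the conclusion is flatness, as in the paper's own argument: $m_AB$ has colength $d$ in $B|_s\simeq k[u]/(u^{dm})$, hence equals $(u^d)$, and $m_A/m_A^2\otimes_A B\simeq m_AB/m_A^2B=(u^d)/(u^{2d})$ is free of rank one over $B/m_AB$, so $\dim_k m_A/m_A^2=1$. Second, invoking \Cref{lem:loc-triv-alg} for $(A,q|_A)$ presupposes that the family $A$ defines an $S$-point of the locally closed substack $\cF_m^c$, not merely that its geometric fibers are curvilinear; over a non-reduced base this scheme-theoretic factorization is exactly what the paper's point-plus-tangent-vector formulation is designed to supply. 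You should either verify the first-order statement as the paper does (injectivity of $T^1(A/k,A)\rightarrow T^1(A/k,B)$) or rerun the deformation argument of \Cref{lem:loc-triv-alg} directly for $A$. Both repairs are local and do not affect the architecture of your proof.
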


\begin{proof}
	First of all, an easy computation shows that any finite flat extension of pointed algebras of degree $d$ $$\cO_{S}[t]/(t^{m})\into \cO_{S}[t]/(t^{dm})$$ 
	is of the form $\phi_d$ for any scheme $S$. Therefore, it is enough to prove that if $A\into B$ is finite flat of degree $d$, then $B$ curvilinear implies $A$ curvilinear. In analogy with the proof of \Cref{lem:loc-triv-alg}, we prove the statement for $S=\spec k$ and then for $S=\spec k[\epsilon]/(\epsilon^2)$ with $k$ algebraically closed field.
	
	Firstly, suppose $S=\spec k$ with $k$ algebraically closed. By \Cref{lem:loc-triv-alg}, we know that $(B,q)\simeq (k[t]/(t^{dm}),q_0)$. We need to prove now that also $A$ is curvilinear. Clearly $A$ is local because of the going up property of flatness and we denote by $m_A$ its maximal ideal. 
	
	If we tensor $A\into B$ by $A/m_A$, we get $k\into k[t]/m_Ak[t]$. However flatness implies that the extension $k\into k[t]/(m_Ak[t])$ has degree $d$, therefore because $k[t]$ is a PID, it is clear that $m_Ak[t]\subset m_q^{d}$ and the morphism $m_A/m_A^2\rightarrow m_q^d/m_q^{d+1}$ is surjective. Let $a \in m_A$ be an element whose image is $t^d$ modulo $t^{d+1}$. If we now consider $A/a\into B/aB\simeq k[t]/(t^d)$, by flatness it still has length $d$, but this implies $A/(a)\simeq k$ or equivalently $m_A=(a)$. Therefore $A$ is curvilinear too. 
	
	Suppose now $S=\spec k[\epsilon]$.
	We know that, given a morphism of schemes $X\rightarrow Y$, we have the exact sequence
	$$ {\rm Def}_{X\rightarrow Y} \longrightarrow {\rm Def}_X \oplus {\rm Def}_Y \longrightarrow \ext^1_{\cO_X}(Lf^*{\rm NL}_Y,\cO_X)$$
	where ${\rm NL}_Y$ is the naive cotangent complex of $Y$. We want to prove that if $X\rightarrow Y$ is the spectrum of the extension $A\into B$,  then the morphism 
	$$ {\rm Def}_X \longrightarrow \ext^1_{\cO_X}(Lf^*{\rm NL}_Y,\cO_X)$$ 
	is injective. This implies the thesis because $Y$ is the spectrum of a pointed curvilinear algebra, therefore ${\rm Def}_Y=0$.

	We can describe the morphism
	$$
	{\rm Def}_X \longrightarrow \ext^1_{\cO_X}(Lf^*{\rm NL}_Y,\cO_X)
	$$ explicitly using the Schlessinger's functors $T^i$. More precisely, it can be described as a morphism
	$$T^1(A/k,A)\rightarrow T^1(A/k,B);$$
	an easy computation shows that $T^1(A/k,A)\simeq k[t]/(t^{m-1})$ and $T^1(A/k,B)\simeq k[t]/(t^{d(m-1)})$. Through these identifications, the morphism  $$T^1(A/k,A)\rightarrow T^1(A/k,B)$$ is exactly the morphism $\phi_d$ defined earlier, i.e. it is defined by the association $t\mapsto t^dp(t)$ with $p(0)\neq 0$. The injectivity is then straightforward.
\end{proof}

Now we are ready to describe the morphism $\cE_{m,d} \rightarrow \cF_{md}$. Let us define $A_0:=\kappa[t]/(t^m)$ and $B_0:=\kappa[t]/(t^{md})$. 
Let $E_{m,d}$ be the category fibered in groupoids whose objects are $$\Big(S,(A\into B,q) ,\phi_A:(A,q)\simeq (A_0\otimes S, q_0 \otimes S), \phi_B: (B,q )\simeq (B_0 \otimes S,q_0\otimes S)\Big)$$ where $(A\into B,q) \in \cE_{m,d}(S)$. The morphism are defined in the obvious way. It is easy to see that $E_{m,d}$ is in fact fibered in sets. As before, we set $G_m:=\underaut(A_0,q_0)$ and $G_{dm}:=\underaut(B_0,q_0)$. Clearly we have a right action of $G_{dm}$ and a left action of $G_m$ on $E_{m,d}$.

\begin{proposition}
	We have the follow isomorphism of fibered categories
	$$ \cE_{m,d} \simeq [E_{m,d}/G_m\times G_{md}]$$ 
	and through this identification the morphism $\cE_{m,d} \rightarrow \cF_{dm}$  is just the projection to the classifying space $\cB G_{md}$.
\end{proposition}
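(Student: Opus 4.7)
The plan is to realize $E_{m,d}$ as the scheme of simultaneous trivializations of the universal extension over $\cE_{m,d}^c$ and recover the quotient description from the standard torsor/equivariant-map dictionary. For $(A\hookrightarrow B,q)\in\cE_{m,d}^c(S)$, let $P$ be the presheaf on $S_{\mathrm{\acute et}}$ whose sections over $T\to S$ are pairs $(\phi_A,\phi_B)$ of isomorphisms $(A,q)\otimes\cO_T\simeq(A_0,q_0)\otimes\cO_T$ and $(B,q)\otimes\cO_T\simeq(B_0,q_0)\otimes\cO_T$. By \Cref{cor:descr-finite-alg}, the sheaves of automorphisms of $(A_0,q_0)$ and $(B_0,q_0)$ are $G_m$ and $G_{md}$, so $P$ carries a free transitive action of $G_m\times G_{md}$ wherever it is nonempty; \Cref{lem:triv-ext} guarantees this holds étale-locally, hence $P$ is an étale $(G_m\times G_{md})$-torsor. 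The tautological rule $(\phi_A,\phi_B)\mapsto \phi_B\circ(A\hookrightarrow B)\circ\phi_A^{-1}$ then defines a $(G_m\times G_{md})$-equivariant morphism $P\to E_{m,d}$, packaging into a functor $\Psi\colon \cE_{m,d}^c\to[E_{m,d}/(G_m\times G_{md})]$.

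The quasi-inverse is the usual twisting construction: from a torsor $P\to S$ with equivariant morphism $f\colon P\to E_{m,d}$, set $A:=(P\times A_0)/(G_m\times G_{md})$ and $B:=(P\times B_0)/(G_m\times G_{md})$, where the product group acts on $A_0$ through its first factor and on $B_0$ through its second. The inclusion $A\hookrightarrow B$ is the descent of $f$ interpreted as an equivariant family of extensions of the trivial algebras, and because $q_0$ is fixed by $G_m$ on $A_0$ and by $G_{md}$ on $B_0$, it descends to a compatible section $q$ on both. Faithfully flat descent verifies that $A$ and $B$ are locally free of the correct ranks and that this construction is quasi-inverse to $\Psi$.

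For the compatibility with the structural projection, observe that the evident morphism $[E_{m,d}/(G_m\times G_{md})]\to[\mathrm{pt}/G_{md}]=\cB G_{md}$ sends $(P,f)$ to the quotient $P/G_m$, which is exactly the $G_{md}$-torsor of trivializations of $(B,q)$ alone. Under the identification $\cF_{md}^c\simeq \cB G_{md}$ of \Cref{cor:descr-finite-alg}, this coincides with the morphism $\cE_{m,d}^c\to \cF_{md}^c$ induced by $(A\hookrightarrow B,q)\mapsto (B,q)$, which is the defining projection of the fibre product $\cE_{m,d}^c=E\cH_{m,d}\times_{F\cH_{md}}\cF_{md}^c$. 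The principal obstacle is purely bookkeeping with the two group actions; all the real mathematical content — étale-local triviality of the torsor of trivializations — is already contained in \Cref{lem:triv-ext}, after which the remaining steps are formal manipulations of torsors and quotient stacks.
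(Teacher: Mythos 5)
Your argument is correct and is essentially the paper's own: the paper's proof consists of the single line ``It follows from \Cref{lem:triv-ext}'', and what you have written is precisely the standard torsor/quotient-stack dictionary that this one-liner leaves implicit, with \Cref{lem:triv-ext} supplying the \'etale-local triviality of the sheaf of simultaneous trivializations. The identification of the projection to $\cB G_{md}$ via $(P,f)\mapsto P/G_m$ and \Cref{cor:descr-finite-alg} is likewise the intended reading, so there is nothing to add.
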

\begin{proof}
	It follows from \Cref{lem:triv-ext}.
\end{proof}

\Cref{lem:triv-ext} also tells us how to describe $E_{m,d}$: the map $\phi_d$  is completely determined by $p(t) \in \cO_S[t]/(t^{d(m-1)})$ with $p(0)\in \cO_S^{\times}$. Therefore, we have a morphism 
$$(\AA^1\setminus 0)\times \AA^{d(m-1)-1} \longrightarrow E_{m,d}$$ 
which is easy to see that it is an isomorphism. Consider now the subscheme of $E_{m,d}$ defined as $$V:=\{ f \in (\AA^1\setminus 0)\times \AA^{d(m-1)-1}|\, a_0=1, \, a_{kd}(f)=0\quad {\rm for }\quad k=1,\dots,m-2\}$$
where $a_{l}(f)$ is the coefficient of $t^{l}$ of the polynomial $f(t)$. Clearly $V$ is an affine space of dimension $(m-1)(d-1)$.

\begin{lemma}\label{lem:descr-affine-bundle}
	In the situation above, we have the isomorphism
	$$ V \simeq [E_{m,d}/G_m],$$
	in particular the morphism $\cE_{m,d} \rightarrow \cF_{md}$ is an affine bundle of dimension $(m-1)(d-1)$.
\end{lemma}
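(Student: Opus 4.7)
The plan is to exhibit $V$ as a global slice for the left $G_m$-action on $E_{m,d}$; concretely, to show that the action morphism
$$ \mu \colon G_m \times V \longrightarrow E_{m,d}, \qquad (g, v) \longmapsto g\cdot v$$
is an isomorphism of schemes. Once this is proved, since $G_m\times V$ carries a free $G_m$-action by left translation on the first factor whose quotient is $V$, we deduce $[E_{m,d}/G_m] \simeq V$; combining with the preceding proposition yields
$$ \cE_{m,d} \simeq [E_{m,d}/(G_m\times G_{md})] \simeq [V/G_{md}],$$
and the structural morphism $\cE_{m,d}\to \cF_{md} \simeq \cB G_{md}$ becomes the projection $[V/G_{md}] \to \cB G_{md}$. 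Pulling back along the canonical atlas $\spec\kappa \to \cB G_{md}$ recovers $V$, an affine space of dimension $(m-1)(d-1)$, giving the claimed affine-bundle structure.

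First I would write everything in coordinates. An $S$-point of $E_{m,d}$ is a polynomial $p(t_B) = \sum_{i=0}^{d(m-1)-1} a_i t_B^i$ with $a_0 \in \gm$, corresponding to the extension $\phi_d(t_A) = t_B^{d}\,p(t_B)$, while an element $g\in G_m$ is an automorphism $t_A\mapsto t_A\,w(t_A)$ with $w(t_A) = w_0 + w_1 t_A + \cdots + w_{m-2} t_A^{m-2}$ and $w_0 \in \gm$. A direct substitution shows the action sends $p$ to
$$\tilde{p}(t_B) \;=\; p(t_B)\cdot w\bigl(t_B^d\,p(t_B)\bigr) \;=\; \sum_{k=0}^{m-2} w_k\, t_B^{kd}\,p(t_B)^{k+1},$$
and extracting the coefficient of $t_B^{kd}$ yields
$$\tilde a_{kd} \;=\; w_k\,a_0^{k+1} \;+\; P_k\bigl(w_0,\dots,w_{k-1};\,a_0,\dots,a_{kd}\bigr),$$
for an explicit polynomial $P_k$: the summands with $j>k$ contribute only to degrees $\geq (k+1)d$, while for $j\leq k$ the contribution is $w_j$ times the coefficient of $t_B^{(k-j)d}$ in $p(t_B)^{j+1}$, a polynomial in $a_0,\dots,a_{(k-j)d}$.

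The slice condition cutting out $V$ is precisely $\tilde a_0 = 1$ together with $\tilde a_{kd} = 0$ for $k=1,\dots,m-2$. The formula above makes this a triangular system in $(w_0,\dots,w_{m-2})$ whose diagonal coefficients $a_0^{k+1}$ are units, so it has a unique polynomial solution: $w_0 = a_0^{-1}$ and inductively $w_k = -a_0^{-(k+1)}\,P_k(\ldots)$. The assignments $p\mapsto (w_0(p),\dots,w_{m-2}(p))$ and $p\mapsto g(p)\cdot p \in V$ are polynomial in the $a_i$ and in $a_0^{-1}$, and together define a scheme morphism $\nu \colon E_{m,d}\to G_m\times V$ that is a two-sided inverse to $\mu$.

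The only real obstacle is the bookkeeping needed to verify the displayed expression for $\tilde a_{kd}$ and the resulting upper-triangular structure; once these are in place, both existence and uniqueness of the slice trivialization are automatic, and the affine-bundle conclusion is formal via the quotient presentation of $\cE_{m,d}$ established in the preceding proposition.
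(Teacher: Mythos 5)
Your proposal is correct and follows exactly the paper's route: the paper's proof also considers the action map $G_m\times V\to E_{m,d}$ and asserts that "a straightforward computation shows that it is an isomorphism," which is precisely the triangular system in $(w_0,\dots,w_{m-2})$ with unit diagonal entries $a_0^{k+1}$ that you write out. Your version simply makes the paper's omitted computation explicit.
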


\begin{proof}
	Consider the morphism 
	$$ G_m \times V \longrightarrow E_{m,d}$$
	which is just the restriction of the action of $G_m$ to $V$. A straightforward computation shows that it is an isomorphism. The statement follows.
\end{proof}

\subsection{Pushout and blowups in families}

In this subsection, we discussion two well-known constructions: pushout and blowup. Specifically, we study these two constructions in families and give conditions to get flatness and compatibility with base change. Moreover, we study when the two constructions are one the inverse of the other.

\begin{lemma}\label{lem:pushout}
	Let $S$ be a scheme. Consider three schemes $X$,$Y$ and $Y'$ which are flat over $S$. Suppose we are given $Y\hookrightarrow X$ a closed immersion and $Y\rightarrow Y'$ a finite flat morphism. Then the pushout $Y'\bigsqcup_Y X$ exists in the category of schemes, it is flat over $S$ and it commutes with base change. 
	
	Furthermore, if $X$ and $Y'$ are proper and finitely presented scheme over $S$, the same is true for $Y' \bigsqcup_Y X$.
\end{lemma}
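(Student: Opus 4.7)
The plan is to work locally on $S$ and reduce to an affine situation. Writing $S = \spec R$, $X = \spec A$, $Y = \spec A/I$ for some ideal $I \subset A$, and $Y' = \spec B'$ with a finite flat ring map $\phi \colon B' \to A/I$, I expect the pushout to be represented by the Ferrand-style fiber product of rings $P := A \times_{A/I} B'$. The universal property of the scheme-theoretic pushout along a closed immersion translates directly into the universal property of this ring-theoretic fiber product, and the whole construction will be governed by the short exact sequence
$$0 \longrightarrow P \longrightarrow A \oplus B' \longrightarrow A/I \longrightarrow 0,$$
where the surjection sends $(a,b) \mapsto \bar a - \phi(b)$.

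For flatness and base change I would exploit this sequence together with the flatness hypotheses. Since $A$, $B'$, and $A/I$ are all flat over $R$, the long exact $\tor$-sequence associated to the displayed short exact sequence forces $\tor^R_1(P, M) = 0$ for every $R$-module $M$, so $P$ is $R$-flat. Tensoring the sequence with an arbitrary $R \to R'$ remains exact (again by flatness of $A/I$), which yields at once
$$P \otimes_R R' \;\simeq\; (A \otimes_R R') \times_{(A/I) \otimes_R R'} (B' \otimes_R R'),$$
i.e.\ compatibility of the pushout with base change. This compatibility makes the affine construction canonical under localisation on $S$ and on $X$, so it glues to a well-defined scheme $Y' \sqcup_Y X$ over $S$.

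For properness and finite presentation, my plan is to show that both $X \to \spec P$ and $Y' \to \spec P$ are finite. The second is immediate from the surjection $P \twoheadrightarrow B'$, whose kernel is identified with $I$. For the first, if $\bar e_1, \dots, \bar e_n \in A/I$ generate $A/I$ as a $B'$-module, then any lifts $e_1, \dots, e_n \in A$ together with $1$ generate $A$ as a $P$-module (since $A = I\cdot A + \sum P\cdot e_i$ and $I \subset P$). Consequently $X \sqcup Y' \to Y' \sqcup_Y X$ is a surjective finite morphism, and this lets one transfer properness and finite presentation from the disjoint union $X \sqcup Y'$, where they are given by hypothesis, to the pushout.

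The delicate point, and the step I expect to be the real obstacle, is the flatness of $P$ over $R$. It is precisely here that flatness of $Y$ over $S$ is indispensable: without the vanishing $\tor^R_1(A/I, M) = 0$ the long exact Tor-sequence gives no information, and both flatness of $P$ and the preservation of the fundamental short exact sequence under base change would fail. Once this homological input is in place, existence, base change, and the transfer of properness all follow formally from the affine construction.
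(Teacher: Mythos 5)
Your proposal is correct and follows essentially the same route as the paper: reduce to the affine case, realize the pushout as the ring-theoretic fiber product $P = A\times_{A/I}B'$, deduce flatness and base-change compatibility from an exact sequence whose exactness is preserved precisely because $Y$ is flat over $S$ (the paper uses the pair of quotient sequences with cokernel $Q$, you use the single Milnor-square sequence $0\to P\to A\oplus B'\to A/I\to 0$ --- same content), and exploit finiteness of $A$ over $P$ for the last part. The only real difference is that the paper outsources the existence of the pushout and the properness/finite-presentation claim to the Stacks Project (37.65.3 and 37.65.5, after using base change to reduce to a base of finite type over $\kappa$), whereas you sketch these directly; your sketches are fine modulo standard facts you are implicitly using (that the affine local constructions glue to a scheme, and that separatedness, properness and finite type descend along a finite surjection, e.g.\ via Artin--Tate).
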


\begin{proof}
	The existence of the pushout follows from Proposition 37.65.3 of \cite{StProj}. In fact, the proposition tells us that the morphism $Y'\rightarrow Y'\bigsqcup_Y X$ is a closed immersion and $X \rightarrow Y'\bigsqcup_Y X$ is integral, in particular affine. It is easy to prove that it is in fact finite and surjective because $Y\rightarrow Y'$ is finite and surjective.  Let us prove that $Y'\bigsqcup_Y X \rightarrow S$ is flat. Because flatness is local condition and all morphisms are affine, we can reduce to a statement of commutative algebra.
	
	Suppose we are given $R$ a commutative ring and $A$,$B$ two flat algebras. Let $I$ be an ideal of $A$ such that $A/I$ is $R$-flat and $B\hookrightarrow A/I$ be a finite flat extension. Then $B\times_{A/I}A$ is $R$-flat. To prove this, we complete the fiber square with the quotients 
	$$
	\begin{tikzcd}
		0 \arrow[r] & B\times_{A/I}A \arrow[d, two heads] \arrow[r, hook] & A \arrow[d, two heads] \arrow[r] & Q' \arrow[d] \arrow[r] & 0 \\
		0 \arrow[r] & B \arrow[r, hook]                                   & A/I \arrow[r]                    & Q \arrow[r]                                 & 0
	\end{tikzcd}
	$$
	and we notice that $Q'\rightarrow Q$ is an isomorphism. Because the extension $B\hookrightarrow A/I$ is flat, then $Q$ (thus $Q'$) is $R$-flat and the $R$-flatness of $A$ and $Q'$ implies the flatness of $B\times_{A/I}A$.  
	
	Suppose now we have a morphism $T\rightarrow S$. This induces a natural morphism 
	$$\phi_T:Y'_T \bigsqcup_{Y_T} X_T \rightarrow (Y'\bigsqcup_Y X)_T$$ where by $(-)_T$ we denote the base change $(-)\times_S T$. Because being an isomorphism is a local condition, we can reduce to the following commutative algebra's statement. Suppose we have a morphism $R\rightarrow \widetilde{R}$; we can consider the same morphism of exact sequences of $R$-modules as above and tensor it with $\widetilde{R}$. We denote by $\widetilde{(-)}$ the tensor product $(-)\otimes_R \widetilde{R}$. The flatness of $Q$ implies that the commutative diagram
	$$
	\begin{tikzcd}
		0 \arrow[r] & \widetilde{B\times_{A/I}A} \arrow[d, two heads] \arrow[r, hook] & \widetilde{A} \arrow[d, two heads] \arrow[r] & \widetilde{Q'}\arrow[d] \arrow[r] & 0 \\
		0 \arrow[r] & \widetilde{B} \arrow[r, hook]                                     & \widetilde{A/I} \arrow[r]                    & \widetilde{Q} \arrow[r]            & 0
	\end{tikzcd}
	$$
	is still a morphism of exact sequences. Because also $\widetilde{B} \times_{\widetilde{A/I}}  \widetilde{A}$ is the kernel of $\widetilde{A} \rightarrow \widetilde{Q'}$, we get that $\phi_T$ is an isomorphism. 
	
	Finally, using the fact that the pushout is compatible with base change, we reduce to the case $S=\spec R_0$ with $R_0$ of finite type over $\kappa$. Thus we can use Proposition 37.65.5 of \cite{StProj} to prove that the pushout (in the situation when $Y'\rightarrow Y$ is flat) preserves the property of being proper and finitely presented.
\end{proof}

The second construction we want to analyze is the blow-up.

\begin{lemma}\label{lem:blowup}
	Let $X/S$ be a flat, proper and finitely presented morphism of schemes and let $I$ be an ideal of $X$ such that $\cO_{X}/I^{m}$ is flat over $S$ for every $m\geq 0$. Denote by ${\rm Bl}_IX\rightarrow X$ the blowup morphism, then ${\rm Bl}_IX \rightarrow S$ is flat, proper and  finitely presented and its formations commutes with base change.
\end{lemma}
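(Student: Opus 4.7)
The plan is to realize the blowup as $\operatorname{Bl}_I X = \operatorname{Proj}_X\bigl(\bigoplus_{m\geq 0} I^m\bigr)$ and deduce everything from flatness of the Rees algebra as a graded $\cO_S$-module. From the short exact sequence
\[
0 \arr I^m \arr \cO_X \arr \cO_X/I^m \arr 0
\]
and the hypothesis that $\cO_X/I^m$ and $\cO_X$ are both $S$-flat, one concludes that each $I^m$ is $S$-flat; hence the Rees algebra $\bigoplus_{m\geq 0} I^m$ is a quasi-coherent graded $\cO_X$-algebra that is $S$-flat in every degree. Since relative $\proj$ of a graded algebra that is flat degree by degree is flat over the base, this gives flatness of $\operatorname{Bl}_I X \arr S$.

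For properness and finite presentation, the standard fact is that for a finite-type ideal $I$ (which is implicit in the setup, since $\cO_X/I$ being $\cO_S$-flat of some finite-type variety is what is of interest) the Rees algebra is locally generated in degree one by finitely many sections, so $\operatorname{Bl}_I X \arr X$ is projective, in particular proper and of finite presentation; composing with $X \arr S$ preserves both properties.

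The main content is base change. Given $T \arr S$ with pullback $X_T \arr T$ and pulled-back ideal $I_T := I\cdot \cO_{X_T}$, I need the natural map
\[
\bigoplus_{m\geq 0}\bigl(I^m \otimes_{\cO_S} \cO_T\bigr) \arr \bigoplus_{m\geq 0} I_T^m
\]
to be an isomorphism of graded $\cO_{X_T}$-algebras; the Proj then commutes with base change automatically. The $S$-flatness of $I^m$ established above makes $I^m\otimes_{\cO_S}\cO_T \arr \cO_X\otimes_{\cO_S}\cO_T = \cO_{X_T}$ injective with image $I^m\cdot\cO_{X_T} = (I\cdot\cO_{X_T})^m = I_T^m$, so the map is an isomorphism in every degree, and therefore on Proj.

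The only subtle point I would have to handle carefully is that one really does need the flatness of \emph{every} power $\cO_X/I^m$, not just $\cO_X/I$: the base-change identification $I^m\otimes_{\cO_S}\cO_T \simeq I_T^m$ fails in general without flatness of the individual $I^m$, as $\operatorname{Tor}^{\cO_S}_1(\cO_X/I^m, \cO_T)$ can obstruct it. Once this is granted for all $m$, the argument above is formal. So my proof will be: (i) derive flatness of each $I^m$ from the short exact sequence and the hypothesis; (ii) deduce flatness of $\operatorname{Bl}_I X/S$ from flatness of the Rees algebra; (iii) invoke the standard properness and finite presentation of blowups of finite-type ideals; (iv) check base-change degree by degree using flatness.
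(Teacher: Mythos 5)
Your proposal is correct and follows essentially the same route as the paper: flatness of each $I^m$ from the short exact sequence, flatness of the blowup via the Rees algebra, and base change controlled by the vanishing of $\tor_1^{\cO_S}(\cO_X/I^m,-)$, which is exactly the obstruction the paper identifies (there stated only for residue fields of closed points, whereas you check arbitrary $T\arr S$ degree by degree). The properness and finite presentation are dismissed in both arguments as standard facts about blowups of finite-type ideals.
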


\begin{proof}
	The flatness of $\cO_X/I^m$ implies the flatness of $I^m$, therefore $\oplus_{m\geq 0}I^m$ is clearly flat over $S$. The universal property of the blowup implies that it is enough to check that the formation of the blowup commutes with base change when we restrict to the fiber of a closed point $s \in S$. Therefore it is enough to prove that the inclusion $m_sI^m\subset m_s\cap I^m$ is an equality for every $m\geq 0$ with $m_s$ the ideal associated to the closed point $s$. The lack of surjectivity of that inclusion is encoded in $\tor_S^1(\cO_X/I^m,\cO_S/m_S)$ which is trivial due to the $S$-flatness of $\cO_X/I^m$. The rest follows from classical results.
\end{proof}
\begin{remark}
	Notice that the flatness of the blowup follows from the flatness of $I^m$ for every $m$ whereas we need the flatness of $\cO_X/I^m$ to have the compatibility with base change.
\end{remark}
\begin{lemma}\label{lem:cond-diag}
	Let $R$ be a ring and $A\into B$ be an extension of $R$-algebras. Suppose $I\subset A$ is an ideal of $A$ such that $I=IB$. Then the following cocartesian diagram
	$$
	\begin{tikzcd}
		A \arrow[r, hook] \arrow[d, two heads] & B \arrow[d, two heads] \\
		A/I \arrow[r, hook]                    & B/I                   
	\end{tikzcd}
	$$
	is a cartesian diagram of $R$-algebras. Furthermore, suppose we have a cartesian diagram of $R$-algebras
	$$
	\begin{tikzcd}
		A:=\widetilde{A}\times_{B/I}B \arrow[d] \arrow[r, hook] & B \arrow[d] \\
		\widetilde{A} \arrow[r, hook]                       & B/I                      
	\end{tikzcd}
	$$
	then the morphism $A\rightarrow \widetilde{A}$ is surjective, its kernel coincides (as an $R$-module) with the ideal $I$ and the diagram is cocartesian.
\end{lemma}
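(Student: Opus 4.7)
The plan is to prove both statements by direct diagram chases at the level of elements, with the hypothesis $I = IB$ playing the crucial role of identifying $I$ as an ideal of $A$ and as an ideal of $B$ simultaneously.

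For the first part, I would define the canonical comparison map $\psi \colon A \to A/I \times_{B/I} B$ by $a \mapsto (a+I, a)$, which is well-defined since $A \hookrightarrow B$ and the two components agree modulo $I$. Injectivity of $\psi$ follows from the injectivity of $A \hookrightarrow B$ via the second component. For surjectivity, given $(\bar a, b)$ in the pullback, I would lift $\bar a$ to some $a \in A$; by construction of the pullback, $a - b \in \ker(B \to B/I) = I$. The hypothesis $I = IB$ means exactly that this kernel ideal of $B$ already lies inside $A$, so $a - b \in I \subset A$, hence $b = a - (a-b) \in A$ and $\psi(b) = (\bar a, b)$.

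For the second part, write $A := \widetilde A \times_{B/I} B$ and denote by $\pi \colon A \to \widetilde A$ and $\iota \colon A \to B$ the two projections. The injectivity of $\iota$ follows from that of $\widetilde A \hookrightarrow B/I$: if $b = 0$ then $\tilde a = 0$ in $B/I$, whence $\tilde a = 0$ in $\widetilde A$. The kernel of $\pi$ is the set of pairs $(0, b)$ with $b \in I$, which $\iota$ identifies, as an $R$-module, with the ideal $I$ of $B$. Surjectivity of $\pi$ follows from the surjectivity of $B \twoheadrightarrow B/I$: given $\tilde a \in \widetilde A \subset B/I$, pick any lift $b \in B$, and then $(\tilde a, b) \in A$ maps to $\tilde a$.

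To conclude that the given cartesian square is cocartesian, I would invoke the first part of the lemma. The analysis above identifies $I$ with an ideal of $A$ (through $\iota$) and shows $A/I \cong \widetilde A$; since $I$ is already an ideal of $B$, the condition $IB = I$ is automatic. Applying the first part to the data $A \hookrightarrow B$ and $I \subset A$ produces a cocartesian square which coincides term by term with the given pullback square, so the latter is cocartesian as well. The whole argument is a routine diagram chase; the only mild subtlety is keeping disciplined track of the two incarnations of $I$ (inside $A$ and inside $B$) and using $I = IB$ to pass between them.
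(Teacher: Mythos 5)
Your proof is correct: the element-wise diagram chase, using $I=IB$ to identify the kernel of $B\to B/I$ with an ideal already contained in $A$, is exactly the ``straightforward computation in commutative algebra'' that the paper invokes without writing out. Your argument supplies those omitted details faithfully, including the slightly delicate point that in the second part the condition $I=IB$ is automatic because $I$ is an ideal of the unital ring $B$, so the first part can be reapplied to deduce cocartesianness.
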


\begin{proof}
	It follows from a straightforward computation in commutative algebra.
\end{proof}	

Finally, we prove that the two constructions are one the inverse of the other.

\begin{proposition}\label{prop:pushout-blowup}
	Suppose we are given a diagram
	$$
	\begin{tikzcd}
		\widetilde{D} \arrow[d] \arrow[r, hook] & \widetilde{X} \\
		D                                       &              
	\end{tikzcd}
	$$ of proper, flat, finitely presented schemes over $S$ such that $\widetilde{D} \rightarrow D$ is a finite flat morphism and $\widetilde{D}\into \widetilde{X}$ is a closed immersion of an effective Cartier divisor. Consider the pushout $X:=\widetilde{X}\amalg_{\widetilde{D}}D$ as in \Cref{lem:pushout} and denote by $I_D$ the ideal associated with the closed immersion $D\into X$. Then the pair $(X,I_D)$ over $S$ verifies the hypothesis of \Cref{lem:blowup}. Furthermore, if we denote by $(\overline{X},\overline{D})$ the blowup of the pair $(X,D)$, there exists a unique isomorphism $(\widetilde{X},\widetilde{D}) \simeq (\overline{X},\overline{D})$ of pairs over $(X,D)$.
\end{proposition}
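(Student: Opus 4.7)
The plan is to proceed in two main steps: first verify the flatness hypothesis of \Cref{lem:blowup} for $(X, I_D)$, and then use the universal property of the blowup together with an affine-local Rees algebra computation to identify $\widetilde{X}$ with $\overline{X}$.

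For the flatness hypothesis, I would work affine-locally on $S$: write $X = \spec A$, $\widetilde{X} = \spec B$, $D = \spec \widetilde{A}$, $\widetilde{D} = \spec B/I$ with $I = I_{\widetilde{D}}$. By \Cref{lem:pushout} the pushout reads $A = B \times_{B/I} \widetilde{A}$, which is also cartesian as a diagram of rings, so the second half of \Cref{lem:cond-diag} identifies $I_D := \ker(A \to \widetilde{A})$ with $I$ as an $R$-module. A direct check then shows that $I_D$ is closed under multiplication by $B$, so $I_D$ and $I_{\widetilde{D}}$ coincide as ideals on both sides, and consequently $I_D^m = I_{\widetilde{D}}^m$ for every $m \geq 1$. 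This yields the pushout description $A/I_D^m = (B/I^m) \times_{B/I} \widetilde{A}$. Since $\widetilde{D}$ is Cartier in $\widetilde{X}$, the conormal sheaf $I^m/I^{m+1}$ is a line bundle on $\widetilde{D}$, hence $S$-flat; inducting on $m$ via the exact sequences $0 \to I^m/I^{m+1} \to B/I^{m+1} \to B/I^m \to 0$ shows that each $B/I^m$ is $S$-flat. Applying \Cref{lem:pushout} to the triple $(B/I^m, B/I, \widetilde{A})$ then yields $S$-flatness of $A/I_D^m$, so \Cref{lem:blowup} applies.

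For the isomorphism, note first that the pullback of $I_D$ to $\widetilde{X}$ equals $I_{\widetilde{D}}$, which is invertible; hence the universal property of the blowup produces a unique morphism $\widetilde{X} \to \overline{X}$ over $X$ carrying $\widetilde{D}$ into $\overline{D}$. To show it is an isomorphism, I would work locally on $\overline{X}$: away from $D$ both sides restrict to $X \setminus D$, so the issue is over $D$. Pick a local generator $f$ of $I_{\widetilde{D}}$; then $f \in I_D \subset A$, and the standard chart of $\overline{X} = \mathrm{Proj}_A \bigoplus_n I_D^n$ is $\spec (R_A)_{(f)}$ where $(R_A)_{(f)} = \{a/f^n : a \in I_D^n,\ n \geq 0\} \subset A_f$. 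The pushout identification gives $A_f = B_f$, and since $I^n = f^n B$ locally we obtain $(R_A)_{(f)} = B$. Such charts cover $\overline{X}$ near $D$ and each recovers the corresponding open of $\widetilde{X}$, so $\widetilde{X} \to \overline{X}$ is an isomorphism identifying $\widetilde{D}$ with $\overline{D}$; uniqueness is built into the universal property.

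The main obstacle is the Rees algebra computation in the final step: one must use the rather subtle fact that although $I_D$ is a priori only an ideal of $A$, the pushout construction forces it to coincide, as a subset of $B$, with the honest locally principal ideal $I_{\widetilde{D}} \subset B$, so that $(R_A)_{(f)}$ recovers all of $B$ rather than a proper subring. Once this identification is in hand, everything else reduces to formal manipulations with flatness and the universal properties of pushout and blowup.
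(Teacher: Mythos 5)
Your proposal is correct and follows essentially the same route as the paper: reduce to the affine case, use \Cref{lem:cond-diag} to identify $I_D$ with $I_{\widetilde{D}}$ (hence $I_D^m = I_{\widetilde{D}}^m$), and then identify $\widetilde{X}$ with the blowup via the universal property together with the Rees algebra computation that exploits $I_{\widetilde{D}}$ being an invertible $\cO_{\widetilde{X}}$-module (your chart computation $(R_A)_{(f)} = B$ is the localized form of the paper's observation that $R_A(I) = A \oplus tB[t]$). If anything, your argument is slightly more careful than the paper's on the first point, since you directly verify flatness of $\cO_X/I_D^m$ (which is what \Cref{lem:blowup} literally requires) via the pushout description $A/I_D^m = (B/I^m)\times_{B/I}\widetilde{A}$, whereas the paper only records flatness of the powers $I_{\widetilde{D}}^m$ themselves.
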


\begin{proof}
	Consider the pushout diagram over $S$
	$$
	\begin{tikzcd}
		\widetilde{D} \arrow[d] \arrow[r, hook] & \widetilde{X} \arrow[d] \\
		D \arrow[r, hook]                       & X;                     
	\end{tikzcd}
	$$
	because every morphism is finite and flatness is a local condition, we can restrict ourself to the affine case and \Cref{lem:cond-diag} assures us that $I_D=I_{\widetilde{D}}$, and in particular $I_D^n=I_{\widetilde{D}}^n$ for every $n\geq 1$. Because $I_{\widetilde{D}}$ is a flat Cartier divisor, the same is true for its powers.
	
	Regarding the second part of the statement, we know that the unicity and existence of the morphism $$(\overline{X},\overline{D}) \longrightarrow (\widetilde{X},\widetilde{D})$$ 
	are assured by the universal property of the blowup. As being an isomorphism is a local property, we can reduce again to the affine case (all the morphisms involved are finite). The statement follows from the following remark.
\end{proof}

\begin{remark}
    Let $A\into B$ be an extension of algebras and $I$ be an ideal of $A$ such that $I=IB$ and $I$ is free of rank $1$ as a $B$-module. Therefore we can describe the Rees algebra as follows
	$$R_A(I):=\bigoplus_{n\geq 0}I^n=A\oplus tB[t]$$
	because $I$ is free of rank $1$ over $B$. It is immediate to see that the morphism $\spec B \rightarrow \proj_A(R_A(I))$ is an isomorphism over $\spec A$.
\end{remark}

\begin{corollary}
  Let $b:X\rightarrow Y$ be a finite birational morphism and let $J_b$ be the conductor ideal. Suppose that $J_b$ is a line bundle as a $\cO_X$-module. If we denote by $J\into Y$ the closed subscheme associated to $J_b$, then there exists a canonical isomorphism 
   $$ X \simeq Bl_J Y$$
   over $Y$.
\end{corollary}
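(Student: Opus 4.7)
The plan is to reduce to the affine-local situation and apply the remark preceding the corollary. First I would unwind the definition of the conductor: by construction $J_b$ is the largest quasi-coherent ideal of $\cO_Y$ which is simultaneously an ideal of $b_*\cO_X$, and in particular it satisfies $J_b = J_b \cdot b_*\cO_X$. Since $b$ is finite hence affine, I can work Zariski-locally on $Y$. On an affine open $\spec A \subseteq Y$ with preimage $\spec B$, the data becomes a finite birational extension $A\hookrightarrow B$ together with an ideal $I \subset A$ satisfying $I = IB$ and invertible as a $B$-module (this is exactly the hypothesis that $J_b$ is a line bundle on $X$). Shrinking further on $Y$ I can assume that $I = Bg$ is free of rank one over $B$.

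Next I would produce the canonical morphism via the universal property of the blowup. The pullback of the ideal sheaf $J \subset \cO_Y$ to $X$ equals $J\cdot \cO_X = J_b$ (using the very identity $J_b = J_b \cdot b_*\cO_X$), which is invertible on $X$ by hypothesis; hence the scheme-theoretic preimage of $J$ in $X$ is an effective Cartier divisor. The universal property of ${\rm Bl}_J Y$ therefore produces a unique morphism $f\colon X \to {\rm Bl}_J Y$ over $Y$, and this $f$ is the desired canonical map.

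To check that $f$ is an isomorphism I would invoke the remark preceding the statement: locally, ${\rm Bl}_J \spec A = \proj_A R_A(I)$, and because $I = Bg$ is $B$-free of rank one the remark supplies an explicit isomorphism $\spec B \simeq \proj_A R_A(I)$ over $\spec A$. This explicit isomorphism pulls the tautological Cartier divisor on the blowup back to the Cartier divisor $(g) \subset \spec B$, which is exactly the pullback of $J$; hence it satisfies the universal property defining $f$ and so must coincide with $f$. Therefore $f$ is an isomorphism, and gluing the local identifications is immediate since $f$ itself is canonical.

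The main (modest) obstacle is bookkeeping: namely, verifying that the map of the remark matches the canonical $f$ coming from the universal property. This reduces to checking that both send the tautological invertible ideal on $\proj_A R_A(I)$ to the ideal $I \subset B$, which is built into the construction of the Rees algebra. Everything else is routine.
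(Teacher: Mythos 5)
Your proposal is correct and follows essentially the same route as the paper: the paper's entire proof is ``It follows from the previous remark,'' and your argument is just that remark spelled out --- reduce to the affine case where the conductor satisfies $I=IB$ and is $B$-free of rank one, identify $R_A(I)$ with $A\oplus tB[t]$, and conclude that $\spec B\simeq\proj_A R_A(I)$. Your extra step of producing the map first via the universal property of the blowup and then matching it with the remark's isomorphism is sound bookkeeping that the paper leaves implicit.
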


\begin{proof}
 It follows from the previous remark.
\end{proof}

\begin{proposition}\label{prop:blowup-pushout}
	Let $D\into X$ be a closed immersion of proper, flat, finitely presented schemes over $S$ such that the ideal $I_D^n$ is $S$-flat for every $n\geq 1$, consider the blowup $b:\widetilde{X}:={\rm Bl}_DX\rightarrow X$ and denote by $\widetilde{D}$ the proper transform of $D$. Suppose that $\widetilde{D}\rightarrow D$ is finite flat (in particular the morphism $b$ is finite birational) and that $b^{-1}I_D = I_{\widetilde{D}}$. Then it exists a unique isomorphism  $\widetilde{X}\amalg_{\widetilde{D}} D\rightarrow X$, which makes everything commutes.  
\end{proposition}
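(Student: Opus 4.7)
The plan is to form the pushout $Y := \widetilde{X}\amalg_{\widetilde{D}}D$ and exhibit it as isomorphic to $X$ via the canonical map. First I would verify the hypotheses of \Cref{lem:pushout}: the $S$-flatness of $I_D^n$ for every $n\geq 1$ together with \Cref{lem:blowup} give that $\widetilde{X}$ is proper, flat and finitely presented over $S$; the proper transform $\widetilde{D}$ is $S$-flat since it is finite flat over the $S$-flat scheme $D$; and $\widetilde{D}\into\widetilde{X}$ is the closed immersion of an effective Cartier divisor, coming from the blowup. Hence $Y$ exists as a proper, flat, finitely presented scheme over $S$. The given data provides a commutative square
\[
\begin{tikzcd}
\widetilde{D} \arrow[r,hook] \arrow[d] & \widetilde{X} \arrow[d,"b"] \\
D \arrow[r,hook] & X,
\end{tikzcd}
\]
so the universal property of the pushout produces the required unique morphism $\phi:Y\to X$.

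To show that $\phi$ is an isomorphism I would argue locally on $X$. Both $b:\widetilde{X}\to X$ and $D\into X$ are affine (the former because $b$ is finite), so $Y\to X$ is affine as well, and the problem reduces to a commutative algebra statement. Setting $A:=\cO_X$, $B:=b_*\cO_{\widetilde{X}}$ (a finite $A$-algebra), $I:=I_D$ and using $b^{-1}I_D=I_{\widetilde{D}}$ to identify $IB$ with the ideal of $\widetilde{D}$ in $B$, the claim becomes that the natural map
\[ A \longrightarrow B\times_{B/IB}A/I \]
is an isomorphism. The first part of \Cref{lem:cond-diag} yields exactly this, provided that (i) $A\to B$ is injective and (ii) $I=IB$ as subsets of $B$.

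Condition (i) is the easier of the two: $b$ is finite birational, so the kernel of $A\to B$ is supported on $D$, and the flatness of $I_D^n$ over $S$ for all $n\geq 1$ rules out embedded associated points of $X$ contained in $D$; hence $A\into B$. The main obstacle is condition (ii). The inclusion $I\subseteq IB$ is tautological, and for the reverse one uses that $\widetilde{D}$ is an effective Cartier divisor in $\widetilde{X}$, so $IB$ is an invertible $B$-ideal, combined with the fact that $B/IB$ is finite flat over $A/I$ of some fixed degree $d$. This degree matches the multiplicity dictated by the blowup at $D$, which together with the finite-birationality of $b$ and the local description $B=\bigcup_n(I^n:_A I^n)$ valid when the blowup is affine forces $IB$ to lie inside $A$ and hence to coincide with $I$. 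This is essentially the converse of the Rees-algebra computation in the remark preceding the proposition; the uniqueness of $\phi$ is automatic from the universal property of the pushout.
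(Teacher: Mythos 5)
Your proposal follows essentially the same route as the paper: form the pushout via \Cref{lem:pushout}, obtain the comparison map from the universal property, reduce to the affine case using finiteness of all the morphisms involved, and conclude with the first part of \Cref{lem:cond-diag}. The only place you go beyond the paper's very terse argument is in spelling out that \Cref{lem:cond-diag} requires $A\into B$ and $I_D\cdot b_*\cO_{\widetilde{X}}=I_D$; your justification of the latter inclusion is heuristic rather than a proof, but it is precisely the condition that the paper's one-line appeal to that lemma silently assumes (and which holds in the paper's applications, where $I_D$ is a conductor ideal), so this is not a gap relative to the paper's own proof.
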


\begin{proof}
	As in the previous proposition, the existence and unicity of the morphism are conseguences of the universal property of the pushout. Therefore as all morphisms are finite we can restrict to the affine case. We can therefore use \Cref{lem:cond-diag} and conclude.
\end{proof}

\section{The substack of $A_r$-singularity}\label{sec:4}

In this section we describe the closed substack of $\Mtilde_g^r$ which parametrizes $A_r$-stable curves with at least a singularity of type $A_h$ with $h\geq 2$. We do so by stratifying this closed substack considering singularities of type $A_h$ with a fixed $h$ greater or equal than $2$. We want to remark that all the following proofs can be upgraded to the pointed case with close to zero effort. However, we decided to treat the non-pointed case for the sake of exposition. From now on, $n$ is a positive integer unrelated with the number of sections.

Let $g\geq 2$ and $r\geq 1$ be two integers and $\kappa$ be the base field of characteristic either $0$ or greater than $r$. We recall the sequence of open subset (see \Cref{rem: max-sing})
$$ \Mtilde_g^0 \subset \Mtilde_g^1 \subset \dots \subset \Mtilde_g^r$$ 
and we define $\widetilde{\cA}_{\geq n}:=\Mtilde_g^r\setminus \Mtilde_g^{n-1}$ for $n=0,\dots,r+1$ setting $\Mtilde_g^{-1}:=\emptyset$.

We now introduce an alternative to $\widetilde{\cA}_{\geq n}$ which is easier to describe. Suppose $n$ is a positive integer smaller or equal than $r$ and let $\cA_{\geq n}$ be the substack of the universal curve $\Ctilde_g^r$ of $\Mtilde_g^r$ parametrizing pairs $(C/S,p)$ where $p$ is a section whose geometric fibers over $S$ are $A_r$-singularities for $r\geq n$. We give to $\cA_{\geq n}$ the structure of closed substack of $\Ctilde_g^r$ inductively on $n$. Clearly if $n=0$ we have $\cA_{\geq 0}=\Ctilde_g^r$. To define $\cA_{\geq 1}$, we need to find the stack-theoretic structure of the singular locus of the natural morphism $\Ctilde_g^r \rightarrow \Mtilde_g^r$. This is standard and it can be done by taking the zero locus of the $1$-st Fitting ideal of $\Omega_{\Ctilde_g^r|\Mtilde_g^r}$. We have that $\cA_{\geq 1}\rightarrow  \Mtilde_g^r$ is now finite and it is unramified over the nodes, while it ramifies over the more complicated singularities. Therefore, we can denote by $\cA_{\geq 2}$ the substack of $\cA_{\geq 1}$ defined by the $0$-th Fitting ideal of $\Omega_{\cA_{\geq 1}|\Mtilde_g^r}$. A local computation shows us that $\cA_{\geq 2} \rightarrow \Mtilde_g^r$ is unramified over the locus of $A_2$-singularities and ramified elsewhere. Inductively, we can iterate this procedure considering the $0$-th Fitting ideal of $\Omega_{\cA_{\geq n-1}|\Mtilde_g^r}$ to define $\cA_{\geq n}$. 

A local computation shows that the geometric points of $\cA_{\geq n}$ are exactly the pairs $(C,p)$ such that $p$ is an $A_{n'}$-singularity for $n\leq n'\leq r$.

Let us define $\cA_n:=\cA_{\geq n}\setminus \cA_{\geq n+1}$ for  $n=0,\dots,r-1$. We have a stratification of $\cA_{\geq 2}$
$$ \cA_{r}=\cA_{\geq r} \subset \cA_{\geq r-1} \subset \dots \subset \cA_{\geq 2}$$ 
where the $\cA_n$'s are the associated locally closed strata for $n=2,\dots, r$. 

The first reason we choose to work with $\cA_{\geq n}$ instead of $\widetilde{\cA}_{\geq n}$ is the smoothness of the locally closed substack $\cA_n$ of $\Ctilde_g^r$.

\begin{proposition}
	The stack $\cA_n$ is smooth.
\end{proposition}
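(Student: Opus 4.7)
My plan is an \'etale-local computation around each geometric point $(C,p)\in\cA_n$, using the explicit versal deformation of an $A_n$-singularity. Fix such a point over an algebraically closed field $k$ and set $m:=3g-3-n$. Because $\widehat{\cO}_{C,p}\simeq k[[x,y]]/(y^2-x^{n+1})$ has an unobstructed $n$-parameter versal deformation of the form $y^2=P(x)$, with $P(x):=x^{n+1}+\sum_{i=0}^{n-1}t_i x^i$, and because global deformations of $C$ are also unobstructed (as in \Cref{theo:descr-quot}), the universal curve admits an \'etale neighborhood
$$ \widehat{\cO}_{\Ctilde_g^r,(C,p)}\simeq k[[x,y,t_0,\dots,t_{n-1},s_1,\dots,s_m]]/(y^2-P(x)), $$
in which $p$ corresponds to $(x,y)=(0,0)$ and the projection to $\Mtilde_g^r$ kills the variables $x,y$.

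Next I would compute the ideals cutting out $\cA_{\geq k}$ in this chart by iterating the Fitting-ideal definition. The presentation of $\Omega_{\Ctilde_g^r|\Mtilde_g^r}$ by generators $dx,dy$ and the single relation $2y\,dy-P'(x)\,dx=0$ gives $1$st Fitting ideal $(P'(x),y)$; hence, after eliminating $y$ via $y^2=P$, one obtains $\cA_{\geq 1}\simeq\spec k[[x,t,s]]/(P,P')$. Assuming inductively $\cA_{\geq k}\simeq\spec k[[x,t,s]]/(P,P',\dots,P^{(k)})$, the module $\Omega_{\cA_{\geq k}|\Mtilde_g^r}$ has one generator $dx$ with relations $P^{(j+1)}(x)\,dx=0$ for $j=0,\dots,k$; the first $k$ of these already hold in $\cA_{\geq k}$, so the $0$th Fitting ideal reduces to $(P^{(k+1)})$ and
$$ \cA_{\geq k+1}\simeq\spec k[[x,t,s]]/(P,P',\dots,P^{(k+1)}). $$
Since $\cA_{\geq n+1}$ is cut out from $\cA_{\geq n}$ by $P^{(n+1)}$, whose value at the origin is $(n+1)!\neq 0$ by the hypothesis on $\cha\kappa$, our point lies in the open stratum $\cA_n=\cA_{\geq n}\setminus\cA_{\geq n+1}$.

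To finish, I would verify smoothness of $\cA_n$ at $(C,p)$ by examining the leading terms of $P,P',\dots,P^{(n)}$ modulo $\mathfrak m^2$, where $\mathfrak m=(x,t_0,\dots,t_{n-1},s_1,\dots,s_m)$ is the maximal ideal of $k[[x,t,s]]$. A straightforward expansion gives
$$ P^{(k)}\equiv k!\,t_k\pmod{\mathfrak m^2}\quad\text{for }0\leq k\leq n-1,\qquad P^{(n)}\equiv (n+1)!\,x\pmod{\mathfrak m^2}, $$
so these $n+1$ elements have linearly independent classes in $\mathfrak m/\mathfrak m^2$. By Nakayama they generate the ideal $(x,t_0,\dots,t_{n-1})$, and consequently $\cA_n$ is \'etale locally isomorphic to $\spec k[[s_1,\dots,s_m]]$, smooth of dimension $m=3g-3-n$. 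The one real subtlety is the construction of the \'etale chart for $\Ctilde_g^r$ in its claimed product form, which requires combining the unobstructed versal deformation of the $A_n$-singularity with the versal deformation of the rest of the curve; everything thereafter is a routine Fitting-ideal and Jacobian calculation.
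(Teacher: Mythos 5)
Your proof is correct and follows essentially the same route as the paper's: an \'etale-local chart for $\Ctilde_g^r$ at $(C,p)$ coming from the unobstructed versal deformation $y^2=P(x)$ of the $A_n$-singularity, followed by the observation that the stratum is cut out by equations with independent linear parts. The paper simply defers the Fitting-ideal bookkeeping to Proposition 1.6 of \cite{DiLorPerVis}, whereas you carry out the identification $\cA_{\geq k}\simeq\spec k[[x,t,s]]/(P,\dots,P^{(k)})$ explicitly; the details check out (including the use of $\cha\kappa>r+1$ to ensure $(n+1)!\neq 0$).
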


\begin{proof}
	We can adapt the proof of Proposition 1.6 of \cite{DiLorPerVis} perfectly. The only thing to point out is that the \'etale model induced by the deformation theory of the pair $(C,p)$  would be $y^2=x^n+a_{n-2}x^{n-2}+\dots+a_1x+a_0$, thus the restriction to $\cA_n$ is described by the equation $a_{n-2}=\dots=a_1=a_0$. The smoothness of $\Mtilde_g^r$ implies the statement.
\end{proof}

Before going into details for the odd and even case, we describe a way of desingularize a $A_n$-singularity.

\begin{lemma}\label{lem:blowup-an}
	Let $(C,p) \in \cA_n(S)$, then the ideal $I_p$ associated to the section $p$ verifies the hypothesis of \Cref{lem:blowup}. If we denote by $b:\widetilde{C}\rightarrow C$ the blowup morphism and by $D$ the preimage $b^{-1}(p)$, then $D$ is finite flat of degree $2$ over $S$.
	\begin{itemize}
		\item If $n=1$, $D$ is a Cartier divisor of $\widetilde{C}$ \'etale of degree $2$ over $S$.	
		\item If $n\geq 2$, the $0$-th Fitting ideal of $\Omega_{D|S}$ define a section $q$ of $D\subset \widetilde{C}\rightarrow S$ such that $\widetilde{C}$ is an $A_r$-prestable curve and $q$ is an $A_{n-2}$-singularity of $\widetilde{C}$. 
	\end{itemize} 
\end{lemma}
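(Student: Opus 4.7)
My plan is to reduce the lemma to an explicit étale-local computation. Because $\cA_n$ is smooth and $(C_s,p(s))$ is an $A_n$-singularity at every geometric point $s\in S$, the versal deformation of an $A_n$-singularity has the shape $y^2-x^{n+1}-a_{n-1}x^{n-1}-\dots-a_0$, and the stratum $\cA_n$ cuts out exactly the locus $a_0=\dots=a_{n-1}=0$ (essentially what is noted in the proof of the previous proposition). Combining this with Artin approximation, I can arrange, after étale localisation on $S$ and on $C$ around the image of $p$, an étale chart
\[
V\simeq \spec R[x,y]/(y^2-x^{n+1}),\qquad p\leftrightarrow(x,y),
\]
and from here everything becomes a local verification in $B:=R[x,y]/(y^2-x^{n+1})$ with $I_p=(x,y)$.

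A direct computation using $y^2=x^{n+1}$ shows $I_p^m=(x^m,x^{m-1}y)$ and that $B/I_p^m$ is $R$-free of rank $2m-1$, with basis $\{1,x,\dots,x^{m-1},y,xy,\dots,x^{m-2}y\}$. In particular $B/I_p^m$ is $S$-flat, so \Cref{lem:blowup} applies and $b:\widetilde C\to C$ is flat, proper, finitely presented, and its formation commutes with base change. Writing the Rees algebra with the degree-$2$ relation $Y^2=x^{n-1}X^2$ produces two standard affine charts
\[
\spec R[x,t]/(t^2-x^{n-1})\ (y=tx),\qquad \spec R[s,y]/(s^{n+1}y^{n-1}-1)\ (x=sy),
\]
and $D=b^{-1}(p)$ is the exceptional divisor $\{x=0\}$, resp.\ $\{y=0\}$. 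For $n=1$ this yields $\spec R[t]/(t^2-1)$ on both charts, which under the assumption $\mathrm{char}\,\kappa\neq 2$ is étale of degree $2$ and visibly an effective Cartier divisor on $\widetilde C$; for $n\geq 2$ the second chart is empty along $D$, and $D=\spec R[t]/(t^2)$ is finite flat of degree $2$.

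For $n\geq 2$ I would then compute $\Omega_{D/S}\simeq R[t]/(t^2,2t)\cdot dt$, which under our characteristic hypothesis equals $R\cdot dt$, so its $0$-th Fitting ideal is $(t)\subset R[t]/(t^2)$. That ideal defines a section $q:S\to D\subset \widetilde C$ sitting at $(x,t)=(0,0)$ in the first chart of $\widetilde C$, which is $\spec R[x,t]/(t^2-x^{n-1})$; on each geometric fibre $s\in S$ the completed local ring of $\widetilde C$ at $q(s)$ is thus $k(s)[[x,t]]/(t^2-x^{n-1})$, an $A_{n-2}$-singularity. Because $b$ is an isomorphism away from $p$ and $n-2\leq r$, the curve $\widetilde C$ is $A_r$-prestable. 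The main obstacle is the étale-local trivialisation at the very start: once the model $y^2=x^{n+1}$ is in place over $R$, every remaining verification reduces to an explicit calculation with bases and affine charts.
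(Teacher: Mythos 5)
Your argument is correct, but it takes a genuinely different route from the paper. The paper never trivialises the family: it uses the smoothness of $\cA_n$ only to reduce to the case of a \emph{reduced} base $S$ (every object is étale-locally pulled back from a smooth chart, and all the asserted properties are stable under base change and satisfy étale descent), then gets $S$-flatness of $\cO_C/I_p^m$ from constancy of the fibrewise length $\dim_L L[[x,y]]/(y^2-x^{n+1},m^h)=2h-1$, and finally checks the description of the blowup fibre by fibre over geometric points. You instead upgrade the same smoothness input to an étale-local \emph{trivialisation} of the pair $(C,p)$ as the constant family $y^2=x^{n+1}$, $I_p=(x,y)$, via the observation that the versal deformation restricted to the stratum $\cA_n$ is trivial, plus Artin approximation; after that everything is a single explicit computation over $R$ with a free basis of $B/I_p^m$ and the two standard charts of the blowup. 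Your route buys more for the same price: the explicit basis gives flatness over an arbitrary (not just reduced) base directly, and the charts exhibit the exceptional divisor, the Fitting-ideal section and the $A_{n-2}$-singularity in the family rather than fibrewise, with base-change compatibility falling out of \Cref{lem:blowup}. The cost is the heavier input at the start (Artin approximation over an excellent chart of $\cA_n$, then pullback to $S$), and you should state it slightly more carefully: what you get is a common étale neighbourhood of $C$ and of $\spec R[x,y]/(y^2-x^{n+1})$ matching the two sections, not an open isomorphism; since every verification you perform (flatness of powers of $I_p$, formation of the blowup, $\Omega_{D/S}$ and its Fitting ideal, the singularity type at $q$) is étale-local and compatible with étale base change, this suffices. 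With that caveat made explicit, the proof is complete.
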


\begin{proof}
	Suppose that the statement is true when $S$ is reduced. Because $\cA_n$ is smooth, we know that up to an \'etale cover of $S$, our object $(C,p)$ is the pullback of an object over a smooth scheme, therefore reduced. All the properties in \Cref{lem:blowup-an} are stable by base change and satisfies \'etale descent, therefore we have the statement for any $S$. 
	
	Assume that $S$ is reduced. To prove the first part of the statement, it is enough to prove that the geometric fiber $\cO_{X_s}/I_s^m$ has constant length over every point $s \in S$. This follows from a computation with the complete ring in $p_s$. Regarding the rest of the statement, we can restrict to the geometric fibers over $S$ and reduce to the case $S=\spec k$ where $k$ is an algebraically closed field over $\kappa$. The statement follows from a standard blowup computation.
\end{proof}

Let us start with $(C,p)\in \cA_n(S)$. We can construct a (finite) series of blowups which desingularize the family $C$ in the section $p$. 

Suppose $n$ is even. If we apply \Cref{lem:blowup-an} iteratively, we get the successive sequence of blowups over $S$
$$ 
\begin{tikzcd}
	\widetilde{C}_m \arrow[r, "b_m"] & \widetilde{C}_{m-1} \arrow[r, "b_{m-1}"] & \dots \arrow[r, "b_1"] & \widetilde{C}_0:=C
\end{tikzcd}
$$
with sections $q_h:S \rightarrow \widetilde{C}_h$ where $m:=n/2$, the morphism $b_h:\widetilde{C}_m:={\rm Bl}_{q_{h-1}}\widetilde{C}_{h-1}\rightarrow \widetilde{C}_{h-1}$ is the blowup of $\widetilde{C}_{h-1}$ with center $q_{h-1}$ ($q_0:=p$) and $q_h$ is the section of $\widetilde{C}_h$ over $q_{h-1}$ as in \Cref{lem:blowup-an}. We have that $\widetilde{C}_m$ is an $A_r$-prestable curve of genus $g-m$ and $q_m$ is a smooth section. 

On the contrary, if $n:=2m-1$ is odd, the same sequence of blowups gives us a curve $\widetilde{C}_m$  which has arithmetic genus either $g-m$ or $g-m+1$ depending on whether the geometric fibers of $\widetilde{C}_m$ are connected or not and an  \'etale Cartier divisor $D$ of degree $2$ over $S$.

\begin{definition}
	Let $(C,p)$ be an object of $\cA_n(S)$ with $n=2m$ or $n=2m-1$ for $S$ any scheme. The composition of blowups $b_m\circ b_{m-1} \circ \dots \circ b_{1}$ described above is called the relative $A_n$-desingularization and it is denoted as $b_{C,p}$. By abuse of notation, we refer to the source of the relative $A_n$-desingularization as relative $A_n$-desingularization. We also denote by $J_b$ the conductor ideal associated to it. 
	
	We say that an object $(C/S,p)$ of $\cA_n$ is a separating $A_n$-singularity if the geometric fibers over $S$ of the relative $A_n$-desingularization are not connected. 
\end{definition}
\begin{remark}
	By construction, the relative $A_n$-desingularization is compatible with base change. In fact, it can be described as a single blowup of $C$ in the closed subscheme defined by the conductor ideal of the desingularization.
\end{remark}

\begin{lemma}\label{lem:conductor}
	Let $(C,p)$ be an object of $\cA_n(S)$ with $n=2m$ or $n=2m-1$ and let $b_{C,p}:\widetilde{C}\rightarrow C$ the relative $A_n$-desingularization. We have that $J_b$ is flat over $S$ and its formation is compatible with base change over $S$. Furthermore, we have that $J_b=I_{b^{-1}(p)}^m$ as an ideal of $\widetilde{C}$, where $I_{b^{-1}(p)}$ is the ideal associated to the preimage of $p$ through $b$.
\end{lemma}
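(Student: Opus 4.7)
The plan is to first establish the identity $J_b = I_{b^{-1}(p)}^m$; the flatness of $J_b$ and the compatibility with base change will then follow from the corresponding properties of $I_{b^{-1}(p)}^m$, which are easier to verify directly. Using the smoothness of $\cA_n$, after an \'etale base change on $S$ we may assume that $(C,p)$ is pulled back from an $A_n$-stable curve over a smooth (in particular reduced) base; since every assertion is \'etale local on $S$ and stable under base change, it suffices to check the identity on geometric fibers, provided the right-hand side is already known to be $S$-flat with formation commuting with base change.

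I first analyze the right-hand side. Iterating \Cref{lem:blowup-an}, the subscheme $b^{-1}(p) \subset \widetilde{C}$ is finite flat of degree $2$ over $S$ and its formation commutes with base change; a direct computation in the successive blowups shows that $\cO_{\widetilde{C}}/I_{b^{-1}(p)}^k$ is $S$-flat and finite of constant degree $2k$ for every $k \geq 1$, so $I_{b^{-1}(p)}^k$ is $S$-flat and its formation commutes with base change as well.

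Next I verify the identity on geometric fibers using the local model $\widehat{\cO}_{C,p} \simeq k[[x,y]]/(y^2 - x^{n+1})$ over an algebraically closed field $k$. If $n = 2m$, the desingularization is $k[[t]]$ with $x = t^2$ and $y = t^{2m+1}$; a direct calculation yields $I_{b^{-1}(p)} = (t^2)$ and $J_b = (t^{2m}) = (t^2)^m$. If $n = 2m-1$, the desingularization is $k[[t]] \times k[[t]]$ with $x \mapsto (t,t)$ and $y \mapsto (t^m, -t^m)$; then $\cO_C$ is the subring of pairs $(f,g)$ with $f \equiv g \pmod{t^m}$, so $I_{b^{-1}(p)}$ is the ideal generated by $(t,t)$ (the pairs with both components in $(t)$) and $J_b$ is the set of pairs with both components in $(t^m)$, matching $I_{b^{-1}(p)}^m$. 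In both cases the identity holds fiberwise.

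Finally, I would upgrade the identity to families. Using the \'etale-local versal model $\spec A[[x,y]]/(y^2 - f(x))$ with $f$ a deformation of $x^{n+1}$ (as in Example~6.2.12 of \cite{TalVis} and the proof of \Cref{theo:contrac}), one writes down the iterated blowup explicitly and checks that $I_{b^{-1}(p)}^m \cdot b_*\cO_{\widetilde{C}} \subseteq \cO_C$, which produces an inclusion $I_{b^{-1}(p)}^m \hookrightarrow J_b$ of coherent ideals on $\widetilde{C}$. Since the left-hand side is $S$-flat and the two sheaves agree on every geometric fiber, Nakayama's lemma forces the inclusion to be an equality, and both flatness and base-change compatibility of $J_b$ follow. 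The main obstacle is the family-level inclusion $I_{b^{-1}(p)}^m \subseteq J_b$: it cannot be deduced formally from the fiberwise equality, so one has to produce it by hand from the versal local model; once that is in place, the rest of the proof propagates automatically from the geometric fibers.
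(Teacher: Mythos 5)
Your proposal is correct and its overall architecture matches the paper's: verify the equality $J_b=I_{b^{-1}(p)}^m$ on geometric fibers by the explicit local computation (your two local models and the resulting conductors $(t^{2m})$ and $(t^m)\times(t^m)$ are exactly right), produce a family-level inclusion $I_{b^{-1}(p)}^m\subseteq J_b$, and then use the $S$-flatness of $I_{b^{-1}(p)}^m$ to promote the fiberwise equality to an equality of ideals. The one genuine divergence is in how the family-level inclusion is obtained, and here the paper is softer than you expect. You assert that the inclusion ``cannot be deduced formally from the fiberwise equality'' and propose to check $I_{b^{-1}(p)}^m\cdot b_*\cO_{\widetilde C}\subseteq \cO_C$ by hand in the versal model of the iterated blowup. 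The paper instead observes that the composite $b_*I_{b^{-1}(p)}^m\to b_*\cO_{\widetilde C}\to Q$, where $Q=b_*\cO_{\widetilde C}/\cO_C$, is a morphism between sheaves that are finite and flat over $S$, so its vanishing \emph{can} be checked on geometric fibers (after the same reduction to a reduced/versal base that you invoke at the outset, via smoothness of $\cA_n$); the fiberwise computation then kills it, $I_{b^{-1}(p)}^m$ lands inside $\cO_C$, and the inclusion into the conductor follows because $J_b$ is by definition the largest ideal of $\cO_C$ that is also an ideal of $\cO_{\widetilde C}$. This observation buys you exactly the step you flag as the main obstacle, without writing out the iterated blowup over $A$. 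Your route would also work (on the stratum $\cA_n$ the versal family is étale-locally the constant one, so the computation reduces to the fiber case with coefficients in $A$), but it is more laborious; on the other hand, your derivation of the flatness and base-change compatibility of $J_b$ from those of $I_{b^{-1}(p)}^m$ is more explicit than the paper's, which dismisses these as ``standard.''
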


\begin{proof}
	The flatness and compatibility with base change are standard. If $S$ is the spectrum of an algebraically closed field, we know that the equality holds. We can consider the diagram
	$$
	\begin{tikzcd}
		&                       & f_*I_{b^{-1}(p)}^m \arrow[d]                       &             &   \\
		0 \arrow[r] & \cO_C \arrow[r, hook] & f_*\cO_{\widetilde{C}} \arrow[r, two heads] & Q \arrow[r] & 0
	\end{tikzcd}
	$$ 
	and show that the composite morphism $f_*I_{b^{-1}(p)}^m\rightarrow Q$ is the zero map, restricting to the geometric fibers over $S$ (in fact they are both finite and flat over $S$). Therefore $I_{b^{-1}(p)}^m$ can be seen also as an ideal of $\cO_{C}$. Because the conductor ideal is the largest ideal of $\cO_{C}$ which is also an ideal of $\cO_{\widetilde{C}}$, we get an inclusion $I_{b^{-1}(p)}^m\subset J_f$ whose surjectivity can be checked on the geometric fibers over $S$.
\end{proof}

\begin{remark}\label{rem:stab}
	The stability condition for $\widetilde{C}$ can be described using the Noether formula (see Proposition 1.2 of \cite{Cat}). We have that $\omega_{C/S}$ is ample if and only if $\omega_{\widetilde{C}/S}(J_b^{\vee})$ is ample.
\end{remark}

Lastly, we prove that the stack parametrizing separating $A_n$-singularities for a fixed positive integer $n$ is closed inside $\Ctilde_g^r$. 

\begin{lemma}\label{lem:sep-sing}
	Let $C\rightarrow \spec R$ a family of $A_r$-prestable curves over a DVR and denote by $K$ the function field of $R$. Suppose there exists a generic section $s_K$ of the morphism such that its image is a separating $A_{r_0}$-singularity (with $r_0\leq r$). Then the section $s_R$ (which is the closure of $s_K$) is still a separating $A_{r_0}$-singularity. 
\end{lemma}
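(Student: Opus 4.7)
The plan is to extend the section by properness, use closedness of the $\cA_{\geq r_0}$ locus to obtain the a priori bound $r_1 \geq r_0$ on the closed-fiber singularity type, and then force equality by a connectedness argument on the iterated blowup. Note that separating already forces $r_0$ odd (an even $A_n$-singularity has a single analytic branch and its desingularization cannot disconnect), so write $r_0 = 2m-1$.

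First, by the valuative criterion of properness applied to $C\to \spec R$, I extend $s_K$ uniquely to $s_R\colon\spec R\to C$. The closed substack $\cA_{\geq r_0}\subset\Ctilde_g^r$ contains the generic section by hypothesis, and being closed it also contains $s_R$; hence the closed-fiber singularity has type $A_{r_1}$ with $r_1\geq r_0$.

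Second, I construct a partial desingularization $b\colon \widetilde{C}\to C$ over $R$ by iterating the blowup of \Cref{lem:blowup-an} a total of $m$ times, starting at $s_R$ and then at each successive section cut out by the $0$-th Fitting ideal of the degree-two preimage. The technical point is that \Cref{lem:blowup-an} is stated over a single stratum $\cA_n$, while here the section's type differs on the two fibers. However, the only input needed is that $\cO/I^k$ is flat over $R$ at each step; for this it suffices to observe that the length of $\cO/I_p^k$ at an $A_n$-singular point equals $2k-1$ independently of $n\geq 1$, so the two fibers contribute the same length and flatness follows since finite modules over a DVR are flat iff they have constant fiber length. The resulting $\widetilde{C}$ is flat proper over $R$, with formation compatible with base change: generically it is the full $A_{r_0}$-desingularization, while on the closed fiber it is a partial resolution of $(C_s, s_R)$.

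Now the separating hypothesis gives that $\widetilde{C}_K$ has at least two connected components, and upper semicontinuity of $h^0(\cO)$ on the flat proper family $\widetilde{C}/\spec R$ with reduced fibers transfers this to $\widetilde{C}_s$. Finally I rule out $r_1>r_0$: if $r_1$ is even, the $A_{r_1}$-singularity has a single analytic branch at $s_R$, so $C_s\setminus\{s_R\}$ is connected and $\widetilde{C}_s$ (obtained by reattaching the single preimage point) remains connected, a contradiction; if $r_1>r_0$ is odd, after $m$ blowups the preimage of $s_R$ is still a single point bearing an $A_{r_1-2m}$-singularity whose two branches meet there, and this gluing point again forces $\widetilde{C}_s$ to be connected, another contradiction. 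Hence $r_1=r_0$, $\widetilde{C}$ is the full $A_{r_0}$-desingularization, and its disconnected special fiber says precisely that $s_R$ is separating at the closed point. The main obstacle is the iteration in the second step, which is handled by the length-constancy observation above.
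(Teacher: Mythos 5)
Your proof is correct and follows essentially the same route as the paper's: extend the section, get $r_1\geq r_0$ from closedness of the locus, establish flatness of $\cO_C/I^h$ over the DVR via the fiberwise length computation $2h-1$ (independent of the singularity type), iterate the blowup of \Cref{lem:blowup-an} $m$ times, and transfer disconnectedness of the generic fiber to the special fiber using that the fibers are geometrically reduced. Your explicit case analysis ruling out $r_1>r_0$ (the set-theoretic preimage of the section stays a single point, so the special fiber of the iterated blowup remains connected, a contradiction) usefully spells out what the paper compresses into ``this clearly implies.''
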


\begin{proof}
	Because $s_K$ is a separating $A_{r_0}$-singularity, then $r_0$ is necessarily odd. Furthemore, we have that the special fiber $s_k:=s_R\otimes_R k$ is an $A_{r_1}$-singularity with $r_1\geq r_0$.
	
	Let us call $m_R$ the ideal associated with the section $s_R$. Because $C/\spec R$ is $A_r$-prestable, we can compute $\dim_L(\cO_C/I_R^h\otimes_R L)$ when $L$ is the algebraic closure of either the function field $K$ or the residue field $k$. We have that 
	$$ \dim_L L[[x,y]]/(y^2-x^n,m^h)=2h-1$$ 
	for every $h\geq 1$ and every $n\geq 2$, where $m=(x,y)$. 
	
	Therefore, $\dim_L(\cO_C/I_R^h\otimes_R L)$ is constant on the geometric fibers over $\spec R$ and we get that $\cO_C/I_R^h$ is $R$-flat  because $R$ is reduced.  Consider now $\widetilde{C}_1:={\rm Bl}_{I_R}C$, which is still $R$-flat, proper and finitely presented thanks to \Cref{lem:blowup} and commutes with base change. We denote by $b:\widetilde{C}_1 \rightarrow C$ the blowup morphism. We have that a local computations shows that if $r_0\geq 3$ we have $b^{-1}(s_R)_{\rm red}=\spec R$ and thus it defines a section $q_1$ of $\widetilde{C}_1$ which is a $A_{r_0-2}$-singularity at the generic fiber and a $A_{r_1-2}$-singularity at the special fiber. We can therefore iterate this procedure $r_0/2$ times until we get $\widetilde{C}_{r_0/2}\rightarrow \spec R$ which is a flat proper finitely presented morphism whose generic fiber is not geometrically connected. Therefore the special fiber is not geometrically connected as it is geometrically reduced. This clearly implies that $r_1=r_0$ and that $s_R$ is a separating section.
\end{proof}

\subsection*{Description of $\cA_n$ for the even case}

Let $n:=2m$ be an even number with $m\geq 1$. Firstly, we study the $A_{2m}$-singularity locally, and then we try to describe everything in families of projective curves.

Let $(C,p)$ be a $1$-dimensional reduced $1$-pointed scheme of finite type over an algebraically closed field where $p$ is an $A_{2m}$-singularity. Consider now the partial normalization in $p$ which gives us a finite birational morphism 
$$b:\widetilde{C} \longrightarrow C$$ 
which is infact an homeomorphism. This implies that the only think we need to understand is how the structural sheaf changes through $b$. We have the standard exact sequence
$$ 0 \rightarrow \cO_C \rightarrow \cO_{\widetilde{C}} \rightarrow Q \rightarrow 0$$
where $Q$ is a coherent sheaf on $C$ with support on the point $p$. Consider now the conductor ideal $J_b$ of the morphism $b$, which is both an ideal of $\cO_C$ and of $\cO_{\widetilde{C}}$.  Consider the morphism of exact sequences
$$
\begin{tikzcd}
	0 \arrow[r] & \cO_C \arrow[r] \arrow[d, two heads] & \cO_{\widetilde{C}} \arrow[r] \arrow[d, two heads] & Q \arrow[r] \arrow[d, Rightarrow, no head] & 0 \\
	0 \arrow[r] & \cO_C/J_b \arrow[r]                    & \cO_{\widetilde{C}}/J_b \arrow[r]                    & Q \arrow[r]                                & 0,
\end{tikzcd}
$$
it is easy to see that the vertical morphism on the right is an isomorphism. Moreover, a local computation shows that $J_b=m_q^{2m}$ as an ideal of $\cO_{\widetilde{C}}$. Therefore \Cref{lem:cond-diag} and \Cref{lem:conductor} imply that to construct an $A_{2m}$-singularity we need the partial normalization $\widetilde{C}$, the section $q$ and a subalgebra of $\cO_{\widetilde{C}}/m_q^{2m}$. Notice that not every subalgebra works. 

\begin{remark}
	First of all, a local computation shows that the extension $\cO_{C}/J_b \into \cO_{\widetilde{C}}/m_q^{2m}$ is finite flat of degree $2$. Luckily, the converse is also true: if $B=k[[t]]$ and $I=(t^{2m})$, then the subalgebras $C$ such that $C \into B/I$ is finite flat of degree $2$ are exactly the ones whose pullback through the projection $B \rightarrow B/I$ is an $A_{2m}$-singularity. This should serve as a motivation for the alternative description we are going to prove for $\cA_{2m}$.
\end{remark}  

The idea now is to prove that the same exact picture works for families of curves. The result holds in a greater generality, but we describe directly the case of families of $A_r$-prestable curves. 

We want to construct an algebraic stack whose objects are triplets $(\widetilde{C}/S,q,A)$ where $(\widetilde{C}/S,q)$ is a $A_r$-prestable $1$-pointed curve of genus $g-m$ with some stability condition (see \Cref{rem:stab}) and $A\subset \cO_{\widetilde{C}}/I_q^{2m}$ is a finite flat extension  of degree $2$ of flat $\cO_S$-algebras, where $I_q$ is the ideal sheaf associated to the section $q$. 

Firstly, we introduce the stack $\Mtilde_{h,[l]}^r$ parametrizing $A_r$-prestable 1-pointed curves $(\widetilde{C},q)$ such that $\omega_{\widetilde{C}}(lq)$ is ample. This is not difficult to describe, in fact we have a natural inclusion
$$ \Mtilde_{h,[l+1]}^r\subset \Mtilde_{h,[l]}^r$$ 
which is an equality for $l\geq 2$ if $h\geq 1$ and for $l\geq 3$ if $h=0$. We have that the only curves that live in $\Mtilde_{h,[l]}^r\setminus \Mtilde_{h,1}$ are curves that have one (irreducible) tail of genus $0$ and the section lands on the tail. We have the following result.

\begin{proposition}\label{prop:desc-str}
	In the situation above, if $h\geq 1$ we have an isomorphism 
	$$ \Mtilde_{h,[l]}^r \simeq \Mtilde_{h,1}^r\times [\AA^1/\gm]$$ 
	for $l\geq 2$. If $h=0$, we have an isomorphism 
	$$ \Mtilde_{0,[l]}\simeq \cB(\gm \rtimes \ga)$$ 
	for $l\geq 3$.
\end{proposition}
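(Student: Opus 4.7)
The plan is to analyze $\Mtilde_{h,[l]}^r$ stratum by stratum using the minimal contraction morphism, then assemble an explicit isomorphism via the deformation to the normal cone.

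\textbf{Case $h\geq 1$, $l\geq 2$.} First I would classify the objects of $\Mtilde_{h,[l]}^r(S)\setminus\Mtilde_{h,1}^r(S)$. A case analysis on degrees, in the spirit of \Cref{rem:poss-contrac}, shows that in every geometric fiber there is exactly one component $\Gamma$ on which $\omega(q)$ fails to be ample but $\omega(lq)$ is; it must be a smooth $\PP^{1}$ meeting the rest in a single node with $q\in\Gamma$. Applying \Cref{prop:unique-contrac} to the $A_{r}$-semistable pair $(\widetilde{C},\{q\})$ contracts $\Gamma$ to a smooth point $\bar p$ and yields $(C,\bar p)\in\Mtilde_{h,1}^r(S)$; this defines a morphism $c\colon\Mtilde_{h,[l]}^r\to\Mtilde_{h,1}^r$.

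Next I would construct a quasi-inverse $\phi\colon\Mtilde_{h,1}^r\times[\AA^{1}/\gm]\to\Mtilde_{h,[l]}^r$ using the deformation to the normal cone of the universal section $\sigma$ of $\pi\colon\Ctilde_{h,1}^r\to\Mtilde_{h,1}^r$. Form
\[
\widetilde X:=\mathrm{Bl}_{\sigma\times\{0\}}\bigl(\Ctilde_{h,1}^r\times\AA^{1}\bigr)\longrightarrow\Mtilde_{h,1}^r\times\AA^{1}.
\]
A local coordinate calculation shows $\widetilde X$ is flat over $\Mtilde_{h,1}^r\times\AA^{1}$ and that its fibers are: the original pair $(C,\bar p)$ for $t\neq 0$, and the nodal union $C\cup_{\bar p}\PP(N\oplus\cO)$ for $t=0$, where $N=\sigma^{\ast}T_{\pi}$ is the normal line bundle. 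The proper transform $\tau$ of $\sigma\times\AA^{1}$ is a smooth section meeting the new $\PP^{1}$-component over $t=0$ at the canonical point $\PP(\cO)$, transverse to the attachment point $\PP(N)$. The standard $\gm$-action on $\AA^{1}$ lifts equivariantly (scaling the tail while fixing its two distinguished points), so the family descends along $\AA^{1}\to[\AA^{1}/\gm]$ and yields $\phi$. A component-wise check of degrees confirms each fiber is a bona fide object of $\Mtilde_{h,[l]}^r$.

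That $c$ and $\phi$ are mutually inverse would follow from \Cref{prop:unique-contrac} and \Cref{lem:contrac-unique}: for $c\circ\phi$, the uniqueness of the minimal contraction recovers $(C,\bar p)$ while the $[\AA^{1}/\gm]$-factor records whether the tail is present; for $\phi\circ c$, the deformation-to-the-normal-cone of the contracted pair reconstructs $\widetilde C$ via \Cref{prop:pushout-blowup} applied to the tail node, whose conductor is exactly the node. The main subtle point will be verifying that the $\cB\gm$-gerbe of tails over $\Mtilde_{h,1}^r$ is canonically trivial; this triviality arises because the tail $\PP(N\oplus\cO)$ carries its two distinguished points $\PP(N)$ (the attachment) and $\PP(\cO)$ (the section) canonically, so that attaching a tail is functorial in $(C,\bar p)$ without any auxiliary choice of framing.

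\textbf{Case $h=0$, $l\geq 3$.} A degree count rules out reducible configurations: a rational tail containing $q$ would force the complementary component to be a rational curve meeting the tail in a single node with no section, giving a component of negative $\omega(lq)$-degree, contradicting $A_{r}$-stability. Singular irreducible alternatives are similarly excluded because any $A_{h}$-singularity with $h\geq 1$ would raise the arithmetic genus above zero. Hence every object of $\Mtilde_{0,[l]}$ is $(\PP^{1},q)$ with a single smooth point, and $\omega_{\PP^{1}}(lq)=\cO(l-2)$ is ample precisely when $l\geq 3$. The moduli stack is therefore the classifying stack of $\underaut(\PP^{1},q)$; since $\underaut(\PP^{1})=\PGL_{2}$ acts transitively on $\PP^{1}$ with stabilizer the Borel subgroup, we find $\underaut(\PP^{1},q)\simeq\gm\rtimes\ga$, giving $\Mtilde_{0,[l]}\simeq\cB(\gm\rtimes\ga)$.
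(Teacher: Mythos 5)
Your strategy is essentially the paper's: contract the rational tail via \Cref{prop:unique-contrac} in one direction, and in the other direction blow up the universal section over the origin of $[\AA^1/\gm]$ (your ``deformation to the normal cone'' over $\AA^1$, made $\gm$-equivariant and descended, is exactly the paper's blowup of $\Ctilde_{h,1}^r\times[\AA^1/\gm]$ along $\Mtilde_{h,1}^r\times\cB\gm$), then conclude with the uniqueness of minimal contractions and \Cref{lem:contrac-unique}. The classification of objects of $\Mtilde_{h,[l]}^r\setminus\Mtilde_{h,1}^r$ and the $h=0$ computation are both correct.

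There is, however, one concrete missing ingredient. As written, your map out of $\Mtilde_{h,[l]}^r$ is only the contraction $c$ landing in $\Mtilde_{h,1}^r$, so the statement that ``$c$ and $\phi$ are mutually inverse'' does not typecheck: you need a second component $\Mtilde_{h,[l]}^r\to[\AA^1/\gm]$, and ``records whether the tail is present'' is not yet a morphism of stacks. A map to $[\AA^1/\gm]$ is the datum of a line bundle with a global section, i.e.\ a generalized effective Cartier divisor; so you must exhibit the tail locus $\Delta_0\subset\Mtilde_{h,[l]}^r$ as an effective Cartier divisor. This requires (i) closedness of $\Delta_0$, which is not automatic since a separating node can degenerate to worse singularities --- the paper invokes \Cref{lem:sep-sing} for this --- and (ii) that $\Delta_0$ is cut out locally by one equation, which follows from a versal-deformation computation (the smoothing parameter of the tail node is a single coordinate on the smooth stack $\Mtilde_{h,[l]}^r$). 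Once this $\phi_2$ is in place, your argument closes up exactly as in the paper. Your final remark about the canonical triviality of the $\cB\gm$-gerbe of tails is a genuine point, but it is subsumed by checking that the unit $\id\Rightarrow\varphi\circ\phi$ coming from the universal property of the blowup is an isomorphism on geometric points, which is where \Cref{lem:contrac-unique} is actually needed.
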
 

\begin{proof}
	The case $h=0$ is straightforward. Clearly for $h\geq 1$ it is enough to construct the isomorphism for $l=2$.
	
	In the proof of Theorem 2.9 of \cite{DiLorPerVis}, they proved the description for $r=2$, but the proof can be generalized easily for any $r$. We give an alternative proof. First of all, it is easy to see that $\Mtilde_{h,[l]}^r$ is smooth using deformation theory.
	
	We follow the construction introduced in Section 2.3 of \cite{DiLorPerVis}: consider $\Ctilde_{h,1}^r$ the universal curve of $\Mtilde_{h,1}^r$ and define $\cD_{h,1}$ to be the blowup of $\Ctilde_{h,1}^r \times [\AA^1/\gm]$ in the center $\Mtilde_{h,1}^r\times \cB\gm$, where $\Mtilde_{h,1}^r\hookrightarrow \Ctilde_{h,1}^r$ is the universal section. If we denote by $q$ the proper transform of the closed substack
	$$\Mtilde_{h,1}^r \times [\AA^1/\gm] \into \Ctilde_{h,1}^r \times [\AA^1/\gm]$$
	we get that $(\cD_{h,1}\rightarrow \Mtilde_{h,1}^r \times [\AA^1/\gm],q)$ define a morphism 
	$$ \varphi:\Mtilde_{h,1}^r \times [\AA^1/\gm] \longrightarrow \Mtilde_{h,[l]}^r$$
	which by construction is birational, as it is an isomorphism restricted to $\Mtilde_{h,1}^r$. 
	
	We define a morphism $\phi:=(\phi_1,\phi_2):\Mtilde_{h,[l]}^r \rightarrow \Mtilde_{h,1}^r \times [\AA^1/\gm]$ and prove that it is a quasi-inverse of $\varphi$.
	
	Let $(\tilde{C},q)$ be an object of $\Mtilde_{h,[l]}^r$ and let $\Sigma$ be the Cartier divisor $\cO(q)$ associated to the section $q$. Then $(\tilde{C},\Sigma)$ is a $A_r$-semistable pair, therefore we can apply \Cref{prop:unique-contrac} to get the contraction $c:\tilde{C} \rightarrow C$. We define the image of $(\tilde{C},q)$ through $\phi_1$ to be the $1$-pointed $A_r$-stable curve $(C,p)$ where $p:=c(q)$.     
	
	We denote by $\Delta_0$ the substack of $\Mtilde_{h,[l]}^r$ parametrizing curves with a separating node such that one of the two connected components of the normalization is of genus $0$. This is closed because of \Cref{lem:sep-sing}. It is easy to see that it is in fact a Cartier divisor inside $\Mtilde_{h,[l]}^r$. Therefore we can define $\phi_2:\Mtilde_{h,[l]}^r \rightarrow [\AA^1/\gm]$ as the morphism associated to the effective Cartier divisor $\Delta_0$. 
	
	\Cref{prop:unique-contrac} implies that there exists an isomorphism of functors $\phi \circ \varphi \simeq \id$. The universal property of the blowup implies that we have a natural transformation $\id \Rightarrow \varphi \circ \phi$ and we can check that it is an isomorphism on the geometric points of $\Mtilde_{h,1}^r \times [\AA^1/\gm]$. \Cref{lem:contrac-unique} concludes the proof. 
\end{proof}

Finally, we are ready to describe $\cA_n$. In \Cref{sub:3.1}, we define the stack $\cF_n^c$ which parametrizes pointed finite flat curvilinear algebras of length $n$ and $\cE_{m,d}^c$ which parametrizes finite flat curvilinear extensions of degree $d$ of pointed finite flat curvilinear algebras of degree $m$. We proved that the natural morphism 
$$ \cE_{m,d}^c \longrightarrow \cF_{md}^c$$
defined by the association $(A\into B) \mapsto B$ is an affine bundle. See \Cref{lem:descr-affine-bundle}. 

Let 
$$ \cE_{m,2}^c\longrightarrow \cF_{2m}^c$$
as above and consider the morphism of stacks
$$ \Mtilde_{g-m,[2m]}^r\longrightarrow \cF_{2m}^c$$ 
defined by the association $(\widetilde{C},q) \mapsto \cO_{\widetilde{C}}/I_q^{2m}$ where $I_q$ is the ideal defined by the section $q$. 

We denote by $\cA'_{2m}$ the fiber product $\cE_{m,2}^c \times_{\cF_{2m}^c} \Mtilde_{g-m,[2m]}^r$. By definition, an object of $\cA_{2m}'$ over $S$ is of the form $(\widetilde{C},q, A \subset \cO_{\widetilde{C}}/I_q^{2m})$ where $A \subset \cO_{\widetilde{C}}/I_q^{2m}$ is a finite flat extension of algebras of degree $2$. Given two objects $(\pi_1:\widetilde{C}_1/S,q_1, A_1 \subset \cO_{\widetilde{C}_1}/I_{q_1}^2m)$ and $(\pi_2:\widetilde{C}_2/S,q_2, A_2 \subset \cO_{\widetilde{C}}/I_{q_2}^2m)$, a morphism over $S$ between them is a pair $(f,\alpha)$ where $f:(\widetilde{C}_1,q_1)\rightarrow (\widetilde{C}_2,q_2)$  is a morphism in $\Mtilde_{g-m,[2m]}^r(S)$ while $\alpha:A_2 \rightarrow A_1$ is an isomorphism of finite flat algebras over $S$ such that the diagram 
$$
\begin{tikzcd}
	A_2 \arrow[r, "\alpha"] \arrow[d, hook]            & A_1 \arrow[d, hook]                        \\
	\pi_{2,*}(\cO_{\widetilde{C}_2}/I_{q_2}^{2m}) \arrow[r] & \pi_{1,*}(\cO_{\widetilde{C}_1}/I_{q_1}^{2m})
\end{tikzcd}
$$
is commutative. 

We want to construct a morphism from $\cA'_{2m}$ to $\cA_{2m}$. Let $S$ be a scheme and $(\widetilde{C},q, A \subset \cO_{\widetilde{C}}/I_q^{2m})$ be an object of $\cA'_{2m}(S)$. We consider the diagram 
$$
\begin{tikzcd}
	\spec_{\cO_S}(\cO_{\widetilde{C}}/I_q^{2m}) \arrow[d, "2:1"] \arrow[r, hook] & \widetilde{C} \arrow[dd, bend left] \\
	\spec_{\cO_S}(A) \arrow[rd]                                                  &                                     \\
	& S                                  
\end{tikzcd}
$$
and complete it with the pushout (see \Cref{lem:pushout}), which has a morphism over $S$:
$$
\begin{tikzcd}
	\spec_{\cO_S}(\cO_{\widetilde{C}}/I_q^{2m}) \arrow[d, "2:1"] \arrow[r, hook] & \widetilde{C} \arrow[dd, bend left] \arrow[d, dotted] \\
	\spec_{\cO_S}(A) \arrow[rd] \arrow[r, dotted]                                & C \arrow[d, dotted]                                   \\
	& S.                                                    
\end{tikzcd}
$$
In this case, $\widetilde{C}$ and $C$ share the same topological space, whereas the structural sheaf $\cO_C$ of the pushout is the fiber product 
$$
\begin{tikzcd}
	\cO_C:=\cO_{\widetilde{C}}\times_{\cO_{\widetilde{C}}/I_q^{2m}} A \arrow[d, two heads] \arrow[r, hook] & \cO_{\widetilde{C}} \arrow[d, two heads] \\
	A \arrow[r, hook]                                                                                 & \cO_{\widetilde{C}}/I_q^{2m};         
\end{tikzcd}
$$
we define $I_p:=I_q\vert_{\cO_C}$ which induces a section $p:S \rightarrow C$ of $C\rightarrow S$.

\begin{lemma}
	In the situation above, $C/S$ is an $A_r$-stable curve of genus $g$ and $p_s$ is a $A_{2m}$-singularity for every geometric point $s \in S$. Furthermore, the formation of the pushout commutes with base change over $S$.
\end{lemma}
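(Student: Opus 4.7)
The plan is to first establish the structural properties of $C \rightarrow S$ via the general pushout lemma, and then verify the remaining pointwise conditions on geometric fibers by an explicit local computation.

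I apply \Cref{lem:pushout} with $\widetilde{X} := \widetilde{C}$, $\widetilde{D} := \spec_{\cO_S}(\cO_{\widetilde{C}}/I_q^{2m})$ (a closed subscheme of $\widetilde{C}$ that is finite locally free of degree $2m$ over $S$, since $q$ lands in the smooth locus of $\widetilde{C}/S$), and $D := \spec_{\cO_S}(A)$. As $\widetilde{D} \rightarrow D$ is finite flat of degree $2$ by hypothesis, the lemma yields that $C \rightarrow S$ is proper, flat, finitely presented, and that its formation commutes with base change. This reduces the remaining assertions about the singularity at $p$, the genus, and the ampleness of $\omega_{C/S}$ to the case $S = \spec \Omega$ with $\Omega$ an algebraically closed extension of $\kappa$.

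In that setting, $\widehat{\cO}_{\widetilde{C}, q} \simeq \Omega\ds{t}$ with $I_q = (t)$. By \Cref{lem:loc-triv-alg} and \Cref{lem:triv-ext}, we may further assume $A \simeq \Omega[s]/(s^m)$ and that the inclusion $A \hookrightarrow \cO_{\widetilde{C},q}/I_q^{2m} \simeq \Omega[t]/(t^{2m})$ has the form $s \mapsto t^2 p(t)$ with $p(0) \in \Omega^{\times}$. Since $\cha \kappa \ne 2$, Hensel's lemma provides a unit $h \in \Omega\ds{t}^{\times}$ with $h^2 = p$; the element $u := t h(t)$ is then another uniformizer of $\widehat{\cO}_{\widetilde{C}, q}$ satisfying $u^2 = t^2 p(t)$. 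Computing the fiber product
$$ \widehat{\cO}_{C, p} \;\simeq\; \Omega\ds{u} \times_{\Omega[u]/(u^{2m})} \Omega[s]/(s^m) $$
explicitly identifies it with the subring of $\Omega\ds{u}$ whose odd-degree coefficients of degree less than $2m$ vanish, that is $\Omega\ds{u^2, u^{2m+1}} \simeq \Omega\ds{x, y}/(y^2 - x^{2m+1})$. Hence $p$ is an $A_{2m}$-singularity; since $b: \widetilde{C} \rightarrow C$ is an isomorphism outside $p$ and $2m \le r$, the fiber $C_s$ is $A_r$-prestable.

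For the genus, $\widetilde{C}_s$ is connected (the $A_{2m}$-singularity is unibranched), so \Cref{rem:genus-count}\,(1) gives $g(C_s) = g(\widetilde{C}_s) + m = g$. For stability, the local computation above also shows that the conductor of $b$ satisfies $J_b = I_q^{2m}$ as an ideal sheaf of $\widetilde{C}$, whence $J_b^{\vee} \simeq \cO_{\widetilde{C}}(2m\,q)$; by \Cref{rem:stab}, $\omega_{C_s}$ is ample iff $\omega_{\widetilde{C}_s}(2m\,q_s)$ is ample, which is exactly the hypothesis $(\widetilde{C}, q) \in \Mtilde_{g-m, [2m]}^r(S)$. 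The only technical step is the explicit computation of the completed local ring of the pushout, where the complete-the-square trick (available because $\cha \kappa \ne 2$) puts the extension into standard form; once that is done, the singularity type, genus, and stability all follow formally from \Cref{lem:pushout}, \Cref{rem:genus-count}, and \Cref{rem:stab}.
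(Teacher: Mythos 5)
Your proposal is correct and follows essentially the same route as the paper: reduce to a geometric point via \Cref{lem:pushout}, pass to completed local rings, and use the normal form of the degree-$2$ curvilinear extension from \Cref{lem:triv-ext} to identify $\widehat{\cO}_{C,p}$ with $\Omega\ds{x,y}/(y^2-x^{2m+1})$ (the paper phrases your Hensel/complete-the-square step as "$t\mapsto t^2$ up to an isomorphism of $k[[t]]/(t^{2m})$"). You are in fact somewhat more complete than the paper, which leaves the genus count and the ampleness of $\omega_{C_s}$ implicit, whereas you spell them out via \Cref{rem:genus-count} and \Cref{rem:stab}.
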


\begin{proof}
	The fact that $C/S$ is flat, proper and finitely presented is a consequence of \Cref{lem:pushout}. The same is true for the compatibility with base change over $S$. We only need to check that $C_s$ is an $A_r$-stable curve for every geometric point $s \in S$. Therefore we can assume $S=\spec k$ with $k$ an algebraically closed field over $\kappa$. Connectedness is trivial as the topological space is the same. We need to prove that $p$ is an $A_{2m}$-singularity. Consider the cartesian diagram of local rings
	$$
	\begin{tikzcd}
		{\cO_{C,p}:=\cO_{\widetilde{C},q}\times_{\cO_{\widetilde{C},p}/m_q^{2m}} A} \arrow[d, two heads] \arrow[r, hook] & {\cO_{\widetilde{C},q}} \arrow[d, two heads] \\
		A \arrow[r, hook]                                                                                     & {\cO_{\widetilde{C},q}/m_q^{2m}}            
	\end{tikzcd}
	$$
	and pass to the completion with respect to $m_p$, the maximal ideal of $\cO_{C,p}$. Because the extensions are finite, we get the following cartesian diagram of rings
	$$
	\begin{tikzcd}
		B \arrow[d, two heads] \arrow[r, hook]   & {k[[t]]} \arrow[d, two heads] \\
		{k[[t]]/(t^m)} \arrow[r, "\phi_2", hook] & {k[[t]]/(t^{2m})};            
	\end{tikzcd}
	$$
	using the description of $\phi_2$ as in \Cref{lem:triv-ext}, it is easy to see that it is defined by the association $t \mapsto t^2$ up to an isomorphism of $k[[t]]/(t^{2m})$. This concludes the proof.
\end{proof}

Therefore we have constructed a morphism of algebraic stacks
$$F:\cA'_{2m} \longrightarrow \cA_{2m}$$ 
defined on objects by the association
$$(\widetilde{C},q,A\subset \cO_{\widetilde{C}}) \mapsto (\widetilde{C}\bigsqcup_{\spec (\cO_{\widetilde{C}}/I_q^{2m})} \spec A, p) $$
and on morphisms in the natural way using the universal property of the pushout. 

To construct the inverse, we use the relative $A_n$-desingularization which is compatible with base change. \Cref{lem:conductor} implies that we can define a functor $G:\cA_{2m}\rightarrow \cA_{2m}'$ on objects 
$$ (C/S,p) \mapsto (\widetilde{C},q, \cO_C/J_b \subset \cO_{\widetilde{C}}/I_q^{2m}) $$
where $b:\widetilde{C}\rightarrow C$ is the relative $A_{2m}$-desingularization, $J_b$ is the conductor ideal relative to $b$ and $q$ is the smooth section of $\widetilde{C}\rightarrow S$ defined as the vanishing locus of the $0$-th Fitting ideal of $\Omega_{b^{-1}(p)|S}$. It is defined on morphisms in the obvious way. 

\begin{proposition}\label{prop:descr-an-pari}
	The two morphisms $F$ and $G$ are quasi-inverse of each other.
\end{proposition}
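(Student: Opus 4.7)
The strategy is to apply the reciprocity results from Subsection~\ref{sec:3}, namely \Cref{prop:pushout-blowup} and \Cref{prop:blowup-pushout}, which were established precisely to handle this type of situation. Since both $F$ and $G$ have been constructed so as to commute with base change over $S$, it suffices to produce natural isomorphisms $G\circ F \simeq \id$ and $F\circ G \simeq \id$ on objects; functoriality is automatic from the universal properties of pushout and blowup, and from the fact that any morphism of $A_{2m}$-singular sections lifts uniquely through the relative $A_{2m}$-desingularization.

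For $G\circ F \simeq \id$, I would start from $(\widetilde C/S,q,A\subset \cO_{\widetilde C}/I_q^{2m})\in\cA'_{2m}(S)$ and form $C := \widetilde C\sqcup_{\spec(\cO_{\widetilde C}/I_q^{2m})} \spec A$ with its section $p$. Since $q$ is a smooth section, $\widetilde D := \spec(\cO_{\widetilde C}/I_q^{2m})$ is an effective Cartier divisor of $\widetilde C$, finite flat of degree $2m$ over $S$, while $\widetilde D \to D := \spec A$ is finite flat of degree $2$ by assumption. Hence the hypotheses of \Cref{prop:pushout-blowup} are met, and the blowup of $C$ along the ideal of $D\hookrightarrow C$ canonically recovers $(\widetilde C,\widetilde D)$ over $(C,D)$. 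On the other hand, \Cref{lem:cond-diag} applied to the cocartesian square defining $\cO_C$ identifies the kernel of $\cO_C\twoheadrightarrow A$ with the ideal $I_q^{2m}$ viewed inside $\cO_C$; by \Cref{lem:conductor} this is precisely the conductor ideal $J_b$ of $b:\widetilde C\to C$, which is the ideal at which the relative $A_{2m}$-desingularization blows up. Consequently $G(F(\widetilde C,q,A))$ produces back $(\widetilde C,q,A)$ up to canonical isomorphism, with the section $q$ recovered from the $0$-th Fitting ideal of $\Omega_{\widetilde D/S}$ as in \Cref{lem:blowup-an}.

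For $F\circ G \simeq \id$, I would start from $(C/S,p)\in\cA_{2m}(S)$, form the relative $A_{2m}$-desingularization $b:\widetilde C\to C$ with smooth section $q$, and consider the conductor ideal $J_b$. By \Cref{lem:conductor}, $J_b = I_{b^{-1}(p)}^m = I_q^{2m}$ inside $\cO_{\widetilde C}$, so $J_b$ is simultaneously an ideal of $\cO_C$ and of $\cO_{\widetilde C}$; the associated quotients sit in a finite flat extension $\cO_C/J_b\hookrightarrow \cO_{\widetilde C}/J_b$ of degree $2$ thanks to the local description of an $A_{2m}$-singularity; and $b^{-1}I_D = I_{\widetilde D}$ where $D:=\spec(\cO_C/J_b)$ and $\widetilde D:=\spec(\cO_{\widetilde C}/J_b)$. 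These are exactly the hypotheses of \Cref{prop:blowup-pushout}, which produces a canonical isomorphism $\widetilde C\sqcup_{\widetilde D} D \simeq C$ over $C$, i.e. $F(G(C,p))\simeq (C,p)$.

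The main subtlety will be to check that the ideals on both sides line up, so that the blowup in the relative $A_{2m}$-desingularization really coincides with the blowup appearing in \Cref{prop:pushout-blowup}, and dually that the cocartesian square produced by $G$ meets the hypotheses of \Cref{prop:blowup-pushout}. Both reductions boil down to the identification $J_b = I_q^{2m}$ from \Cref{lem:conductor} combined with the algebraic form of the conductor square in \Cref{lem:cond-diag}; the required flatness and base-change compatibility have already been packaged in \Cref{lem:pushout}, \Cref{lem:blowup} and \Cref{lem:conductor}, so once the ideal identifications are in place the proposition follows.
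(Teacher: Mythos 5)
Your proposal is correct and follows essentially the same route as the paper, whose proof simply cites \Cref{prop:pushout-blowup} and \Cref{prop:blowup-pushout}; you have additionally spelled out the hypothesis checks (Cartier divisor, finite flatness of degree $2$, and the identification $J_b = I_{b^{-1}(p)}^m = I_q^{2m}$ via \Cref{lem:conductor} and \Cref{lem:cond-diag}) that the paper leaves implicit.
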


\begin{proof}
	The statement is a conseguence of \Cref{prop:pushout-blowup} and \Cref{prop:blowup-pushout}.
\end{proof}

\begin{corollary}\label{cor:descr-an-pari}
	$\cA_{2m}$ is an affine bundle of dimension $(m-1)$ over the stack $\Mtilde^r_{g-m,[2m]} $ for $m\geq 1$.
\end{corollary}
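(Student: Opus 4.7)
The plan is to deduce the corollary directly from \Cref{prop:descr-an-pari} together with \Cref{lem:descr-affine-bundle}. First I would invoke \Cref{prop:descr-an-pari} to replace $\cA_{2m}$ with the equivalent stack $\cA'_{2m}$, which by construction sits in a $2$-cartesian square
$$
\begin{tikzcd}
\cA'_{2m} \arrow[r] \arrow[d] & \cE_{m,2}^c \arrow[d] \\
\Mtilde_{g-m,[2m]}^r \arrow[r] & \cF_{2m}^c,
\end{tikzcd}
$$
where the bottom map sends $(\widetilde C,q)$ to $\cO_{\widetilde C}/I_q^{2m}$ and the right vertical map forgets the subalgebra.

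Next I would recall that \Cref{lem:descr-affine-bundle} (with $d=2$) asserts that the right vertical morphism $\cE_{m,2}^c \arr \cF_{2m}^c$ is an affine bundle of rank $(m-1)(2-1)=m-1$. Since affine bundles are stable under arbitrary base change, pulling back along $\Mtilde_{g-m,[2m]}^r \arr \cF_{2m}^c$ shows that the left vertical morphism $\cA'_{2m}\arr \Mtilde_{g-m,[2m]}^r$ is likewise an affine bundle of dimension $m-1$.

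Finally, composing with the isomorphism $\cA_{2m}\simeq \cA'_{2m}$ given by the functor $G$ of \Cref{prop:descr-an-pari} yields the claimed affine bundle structure on $\cA_{2m}\arr \Mtilde_{g-m,[2m]}^r$. There is no real obstacle here: the content of the statement has been packaged into \Cref{lem:descr-affine-bundle} and \Cref{prop:descr-an-pari}, and the corollary is a one-line formal consequence via base change of affine bundles.
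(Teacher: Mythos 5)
Your proposal is correct and is essentially identical to the paper's own proof: the paper likewise observes that $\cA'_{2m}$ is the fiber product $\cE_{m,2}^c \times_{\cF_{2m}^c} \Mtilde_{g-m,[2m]}^r$ and concludes directly from \Cref{lem:descr-affine-bundle} (your computation $(m-1)(2-1)=m-1$ is the right reading of that lemma). The only difference is that you spell out the base-change step and the appeal to \Cref{prop:descr-an-pari} explicitly, which the paper leaves implicit.
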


\begin{proof}
	Because $\cA_{2m}'$ is constructed as the fiber product $\cE_{m,2}^c \times_{\cF_{2m}^c} \Mtilde_{g-m,[2m]}^r$, the statement follows from \Cref{lem:descr-affine-bundle}.
\end{proof}

\subsection{Description of $A_n$ for the odd case}

Let $n:=2m-1$ and $(C,p)$ be a $1$-dimensional reduced $1$-pointed scheme of finite type over an algebraically closed field and $p$ is an $A_{2m-1}$-singularity. Consider now the partial normalization in $p$ which gives us a finite birational morphism 
$$b:\widetilde{C} \longrightarrow C$$ 
which is not an homeomorphism, as we know that $b^{-1}(p)$ is a reduced divisor of $\widetilde{C}$ of length $2$. We can use \Cref{lem:cond-diag} to prove that the extension $\cO_C \hookrightarrow b_*\cO_{\widetilde{C}}$ can be constructed pulling back a subalgebra of $b_*\cO_{\widetilde{C}}/J_b$ through the quotient $b_*\cO_{\widetilde{C}}\rightarrow b_*\cO_{\widetilde{C}}/J_b$, as in the even case. We can describe the subalgebra in the following way.

Consider the divisor $b^{-1}(p)$ which is the disjoint union of two closed points, namely $q_1$ and $q_2$. Then the composition
$$
\begin{tikzcd}
	\cO_C/J_b \arrow[r] & f_*\cO_{\widetilde{C}}/J_b=\cO_{mq_1}\oplus \cO_{mq_2} \arrow[r] & \cO_{mq_i}
\end{tikzcd}
$$
is an isomorphism for $i=1,2$, where $\cO_{mq_i}$ is the structure sheaf of the support of the Cartier divisor $mq_i$ for $i=1,2$ and the second map is just the projection. Therefore the subalgebra $\cO_C/J_b$ of $f_*\cO_{\widetilde{C}}/J_b$ is determined by an isomorphism between $\cO_{mq_1}$ and $\cO_{mq_2}$. Recall that \Cref{rem:genus-count} implies that we can have two different situation: either $\widetilde{C}$ is connected and its genus is $g-m$  or it has two connected components of total genus $g-m+1$ and the two points lie in different components. 
\begin{definition}
	Let $0\leq i\leq (g-m+1)/2$. We define $\cA_{2m-1}^{i}$ to be the substack of $\cA_{2m-1}$ parametrizing $1$-pointed curves $(C/S,p)$ such that the geometric fibers over $S$ of the relative $A_{2m-1}$-desingularization are the disjoint union of two curves of genus $i$ and $g-m-i+1$.
	
	Furthermore, we define $\cA_{2m-1}^{\rm ns}$ the substack of $\cA_{2m-1}$ parametrizing curves $(C/S,p)$ such that the geometric fibers of the relative $A_{2m-1}$-desingularization are connected of genus $g-m$. 
\end{definition}

\begin{proposition}
	The algebraic stack $\cA_{2m-1}$ is the disjoint union of $\cA_{2m-1}^{\rm ns}$ and $\cA_{2m-1}^i$ for every $0 \leq i\leq  (g-m+1)/2$.
\end{proposition}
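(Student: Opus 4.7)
My plan is first to verify the set-theoretic partition on geometric points, and then to upgrade it to a decomposition of substacks.

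\textbf{Set-theoretic partition.} Let $(C,p)$ be a geometric point of $\cA_{2m-1}$. Applying \Cref{rem:genus-count} with $r=2m-1$ (so $h=m-1$), the partial normalization $\widetilde{C}$ at $p$ is either connected with $g(\widetilde{C})=g-m$, or else has two connected components whose arithmetic genera sum to $g-m+1$. Reordering so the smaller genus comes first, the disconnected case gives a partition $(i,\,g-m+1-i)$ with $0\le i\le(g-m+1)/2$. Hence every geometric point belongs to exactly one of the stated substacks, and pairwise disjointness is immediate from the definitions.

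\textbf{Upgrade to substacks.} To show each piece is open-and-closed inside $\cA_{2m-1}$, I would proceed as follows. By \Cref{lem:sep-sing}, the union $\bigcup_{i}\cA_{2m-1}^{i}$ of separating loci is stable under specialization and hence closed, so its complement $\cA_{2m-1}^{\rm ns}$ is open. On the closed separating locus, the relative desingularization $\widetilde{C}\to S$ furnished by \Cref{lem:blowup-an} is a flat proper family whose geometric fibers have exactly two connected components (at most two can arise from resolving a single $A_{2m-1}$-singularity on a connected curve, and at least two by the separating condition). Arguing componentwise via Stein factorization, each of the two pieces forms a flat subfamily of connected reduced curves with $h^{0}(\cO)$ constantly equal to $1$, so its arithmetic genus $h^{1}(\cO)$ is locally constant by constancy of $\chi(\cO)$ in flat families. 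This exhibits each $\cA_{2m-1}^{i}$ as a clopen substack of the separating locus, completing the decomposition.

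\textbf{Main obstacle.} The delicate step is the last componentwise analysis: one has to rule out that the two components of $\widetilde{C}\to S$ merge, or that a component further splits, under a specialization that remains inside the separating locus. Both are controlled by showing that $h^{0}(\widetilde{C}_{s},\cO)$ is constantly equal to $2$ on the separating locus, which follows from \Cref{lem:sep-sing} combined with cohomology and base change for the flat proper morphism $\widetilde{C}\to S$.
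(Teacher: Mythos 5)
Your set-theoretic partition and your treatment of the separating locus are fine, and the second half of your argument (flatness of the relative desingularization plus local constancy of $\chi(\cO)$ on each component) is essentially the paper's: the paper runs the componentwise analysis over a strictly henselian base and quotes EGA IV, 15.6.5 and 15.6.8 to see that the component through a chosen section of $b^{-1}(p)$ is clopen in $\widetilde{C}$, hence flat and proper over the base with locally constant arithmetic genus.

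There is, however, a genuine gap in the first dichotomy. \Cref{lem:sep-sing} is a one-directional specialization statement: a \emph{separating} generic singularity stays separating, so the separating locus is closed and $\cA_{2m-1}^{\rm ns}$ is open. But a disjoint union of substacks requires each piece to be open \emph{and} closed, and you never rule out that a non-separating $A_{2m-1}$-singularity specializes, within $\cA_{2m-1}$, to a separating one; equivalently, you never show the separating locus is open. Since each $\cA_{2m-1}^{i}$ is only exhibited as clopen inside the (closed) separating locus, you obtain that the $\cA_{2m-1}^{i}$ are closed in $\cA_{2m-1}$ but not that they are open, and that $\cA_{2m-1}^{\rm ns}$ is open but not that it is closed. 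The paper closes this hole without \Cref{lem:sep-sing}: the relative $A_{2m-1}$-desingularization $\widetilde{C}\to S$ exists over all of $\cA_{2m-1}$, is flat, proper and finitely presented with geometrically reduced fibers, so the number of connected components of its geometric fibers is \emph{locally constant} (EGA IV, 15.5.7), which makes $\cA_{2m-1}^{\rm ns}$ open and closed in one stroke. Your own final remark about ``$h^{0}(\widetilde{C}_{s},\cO)$ constantly equal to $2$'' is exactly this kind of statement, but you only assert it on the separating locus, where it is true by definition; the content you need is the local constancy of $h^{0}(\widetilde{C}_{s},\cO)$ over all of $\cA_{2m-1}$, and that does not follow from \Cref{lem:sep-sing}.
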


\begin{proof}
	Let $(C/S,p)$ be an object of $\cA_{2m-1}$ and consider the relative \\ $A_n$-desingularization $$b:(\widetilde{C},q)\rightarrow (C,p)$$
	over $S$. Because $\widetilde{C}\rightarrow S$ is flat, proper, finitely presented and the fibers over $S$ are geometrically reduced, we have that the number of connected components of the geometric fibers of $\widetilde{C}$ over $S$ is locally constant. See Proposition 15.5.7 of \cite{EGA}. Therefore we have that $\cA_{2m-1}^{\rm ns}$ is open and closed inside $\cA_{2m-1}$. Furthermore, we have that the objects of the complement $\cA_{2m-1}^{\rm s}$ of $\cA_{2m-1}^{\rm ns}$ are pairs $(C/S,p)$ such that the geometric fibers of $\widetilde{C}$ over $S$ have two connected components. Suppose that $S=\spec R$ with $R$ a strictly henselian ring over $\kappa$. We know that $b^{-1}(p)$ is \'etale of degree $2$ over $R$, thus it is the disjoint union of two copies of $R$. We denote by $q$ one of the two sections of $\widetilde{C}\rightarrow \spec R$ and we denote by $C_s^0$ the connected component of the fiber $\widetilde{C}_s$ which contains $q_s$ for every point $s \in \spec R$. Proposition 15.6.5 and Proposition 15.6.8 in \cite{EGA} imply that the set-theoretic union 
	$$ C_0:=\bigcup_{s \in S} C_s^0$$
	is a closed and open subscheme of $\widetilde{C}$, therefore the morphism $C_0\rightarrow S$ is still proper, flat and finitely presented. In particular, the arithmetic genus of the fibers is locally constant. It is easy to see that in particular $\cA_{2m-1}^{\rm s}$ is the disjoint union of $\cA_{2m-1}^{\rm i}$ for $0\leq i\leq (g-m+1)/2$.
\end{proof}

We start by describing $\cA_{2m-1}^{\rm ns}$. Let $\Mtilde_{h,2[l]}^r$ be a fibered category in groupoid over the category of schemes whose objects are of the form $(\widetilde{C}/S,q_1,q_2)$ where $\widetilde{C}\rightarrow S$ is a flat, proper, finitely presented morphism of scheme, $q_1$ and $q_2$ are smooth sections, $\widetilde{C}_s$ is an $A_r$-prestable curve of genus $h$ for every geometric point $s \in S$ and $\omega_{\widetilde{C}}(lq_1+lq_2)$ is relatively ample over $S$. The morphisms are defined in the obvious way. 

\begin{proposition}
	We have an isomorphism  of fibered category
	$$\Mtilde_{h,2[l]}^r\simeq  \Mtilde_{h,2}^r \times [\AA^1/\gm] \times [\AA^1/\gm]$$
	for $l\geq 2$ and $h\geq 1$. Furthermore, we have 
	$$ \Mtilde_{0,2[l]}\simeq \cB\gm \times [\AA^1/\gm]$$
	for $l\geq 3$.
\end{proposition}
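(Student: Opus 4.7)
The plan is to adapt the proof of Proposition~\ref{prop:desc-str} to the two-section setting. First I would observe that $\Mtilde_{h,2[l]}^r$ is a smooth algebraic stack---by the same argument as Theorem~\ref{theo:descr-quot} combined with unobstructedness of deformations of $A_r$-prestable curves---and that by Remark~\ref{rem:poss-contrac} the ampleness of $\omega_{\widetilde{C}}(lq_1+lq_2)$ forces every semistable-but-not-stable component to be a rational tail of type~(1) carrying exactly one of $q_1,q_2$. Consequently the complement of $\Mtilde_{h,2}^r$ inside $\Mtilde_{h,2[l]}^r$ decomposes according to which of the two sections is supported on a rational tail.

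For $h\geq 1$ and $l\geq 2$ I would mimic the $n=1$ construction. Let $\sigma_1,\sigma_2\colon \Mtilde_{h,2}^r \arr \Ctilde_{h,2}^r$ be the two universal sections, and let $\cD_{h,2}$ be the iterated blowup of $\Ctilde_{h,2}^r\times[\AA^1/\gm]\times[\AA^1/\gm]$ along the two disjoint codimension-two centers $\sigma_1(\Mtilde_{h,2}^r)\times\cB\gm\times[\AA^1/\gm]$ and $\sigma_2(\Mtilde_{h,2}^r)\times[\AA^1/\gm]\times\cB\gm$; let $q_1,q_2$ be the proper transforms of the two universal sections. A fiberwise computation will show that $(\cD_{h,2},q_1,q_2)$ is a family of $A_r$-prestable curves with $\omega(lq_1+lq_2)$ relatively ample, producing the functor
\[
\varphi\colon \Mtilde_{h,2}^r\times[\AA^1/\gm]\times[\AA^1/\gm]\arr \Mtilde_{h,2[l]}^r.
\]
For the inverse $\phi=(\phi_0,\phi_1,\phi_2)$, given $(\widetilde{C}/S,q_1,q_2)$ I would apply Proposition~\ref{prop:unique-contrac} to the $A_r$-semistable pair $(\widetilde{C},\cO(q_1+q_2))$ to obtain a minimal contraction $c\colon \widetilde{C}\arr C$, and set $\phi_0(\widetilde{C},q_1,q_2):=(C,c(q_1),c(q_2))\in\Mtilde_{h,2}^r$ (well-defined by Remark~\ref{rem:cart-div}). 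Each $\phi_i\colon \Mtilde_{h,2[l]}^r \arr [\AA^1/\gm]$ is the classifying map of the effective Cartier divisor $\Delta_i\subset \Mtilde_{h,2[l]}^r$ parameterizing families in which $q_i$ lies on a rational tail; for $h\geq 1$ the divisors $\Delta_1,\Delta_2$ are disjoint because no single rational tail can carry both sections without destroying ampleness on the remaining component.

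The identity $\phi\circ\varphi\simeq \id$ should follow formally from the uniqueness of minimal contractions in Proposition~\ref{prop:unique-contrac}, while the universal property of the blowup will supply a natural transformation $\id\Rightarrow \varphi\circ\phi$ that becomes an isomorphism on geometric points by Lemma~\ref{lem:contrac-unique}. For the case $h=0$, $l\geq 3$ the same recipe applies with one twist: the only boundary configuration is the unique chain $\PP^1\cup \PP^1$ with one section on each component, and the two would-be tail divisors collapse into a single divisor $\Delta_0$ since for $h=0$ there is no distinguished ``main'' component to break the symmetry between $q_1$ and $q_2$. Accordingly I would build $\varphi\colon \cB\gm\times[\AA^1/\gm]\arr \Mtilde_{0,2[l]}$ by a single blowup of the universal $\PP^1$-bundle over $\cB\gm\times[\AA^1/\gm]$ at one of the two tautological sections above the origin $\cB\gm\subset[\AA^1/\gm]$, and a matching two-component $\phi$ on the other side. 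The hardest step is expected to be verifying that each $\Delta_i$ (and $\Delta_0$) is a genuine Cartier divisor with the correct stacky structure whose formation commutes with base change; this is an explicit deformation-theoretic calculation near a rational-tail point, after which the rest of the argument is essentially formal.
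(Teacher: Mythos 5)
Your overall strategy---redoing the proof of \Cref{prop:desc-str} with two sections, via a two-center blowup in one direction and contraction plus tail divisors in the other---is the right one; the paper itself disposes of this proposition in one line as a consequence of \Cref{prop:desc-str} applied to each section separately, so you are re-deriving rather than citing, which is fine. However, there is a genuine gap in how you treat the locus where \emph{both} sections sit on rational tails. Your claim that $\Delta_1$ and $\Delta_2$ are disjoint is false, and in fact incompatible with the statement being proved: for $h\geq 1$ one can attach two disjoint rational tails to a genus-$h$ curve at two distinct points and put $q_1$ on one and $q_2$ on the other; then $\omega(lq_1+lq_2)$ is ample for $l\geq 2$, so this is a point of $\Delta_1\cap\Delta_2$. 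That intersection must be nonempty, since under the asserted isomorphism it is exactly the preimage of $\Mtilde^r_{h,2}\times\cB\gm\times\cB\gm$, and it is precisely what your iterated blowup produces over that locus. The justification you give (``no single rational tail can carry both sections'') addresses an irrelevant configuration: a tail carrying both sections has $a_\Gamma=1>0$ and is already stable.

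Consequently your $\phi_0$ is not licensed as stated on $\Delta_1\cap\Delta_2$: there the pair $(\widetilde C,\cO(q_1+q_2))$ has \emph{two} irreducible components with $a_\Gamma=0$, so it is not an $A_r$-semistable pair in the sense of the paper (the definition allows at most one semistable component), and \Cref{prop:unique-contrac} does not apply to it. This is fixable: contract one tail at a time, e.g.\ first apply \Cref{prop:unique-contrac} to the genuinely semistable pair $(\widetilde C,\cO(q_1+2q_2))$ to contract the tail through $q_1$, then to the resulting curve with the divisor $2q_1'+q_2$ to contract the tail through $q_2$; equivalently, note that the two tails are disjoint, so the contraction theory of \Cref{sec:1} applies verbatim to each separately. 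With that repair, and with $\Delta_1,\Delta_2$ understood as two Cartier divisors that meet rather than being disjoint, the rest of your outline---including the $h=0$ case, where there is indeed a single boundary divisor and hence only one $[\AA^1/\gm]$ factor---goes through.
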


\begin{proof}
	This follows easily from \Cref{prop:desc-str}. 
\end{proof}

We want to construct an algebraic stack with a morphism over $\Mtilde_{h,2[m]}^r$ whose fibers parametrize the isomorphisms between the two finite flat $S$-algebras $\cO_{mq_1}$ and $\cO_{mq_2}$.

\begin{remark}
	Recall that we have a smooth stack $\cE_{m,d}^c$ which parametrizes finite flat extensions $A\into B$ of degree $d$ with $B$ curvilinear of length $m$. If $d=1$, the stack $\cE_{m,1}^c$ parametrizes isomorphisms of finite flat algebras of length $m$. We also have a map 
	$$ \cE_{m,1}^c \longrightarrow \cF_m^c \times \cF_m^c$$ 
	defined on objects by the association $(A\into B) \mapsto (A,B)$, which is a $\gm \times \ga^{m-2}$-torsor. See \Cref{sub:3.1} for a more detailed discussion.
\end{remark}

Consider the morphism
$$\Mtilde_{g-m,2[m]}^r \longrightarrow \cF_{m}^c \times \cF_m^c$$
defined by the association 
$$(\widetilde{C},q_1,q_2)\mapsto (\cO_{\widetilde{C}}/I_{q_1}^m,\cO_{\widetilde{C}}/I_{q_2}^m);$$
and let $I_{2m-1}^{\rm ns}$ be the fiber product $\Mtilde_{h,2[m]}^r \times_{(\cF_{m}^c\times\cF_{m}^c)} \cE_{m,1}^c$. It parametrizes objects $(C,q_1,q_2,\phi)$ such that $(C,q_1,q_2) \in \Mtilde_{h,2[m]}^r$ and $\phi$ is an isomorphism between $\cO_{mq_1}$ and $\cO_{mq_2}$ as $\cO_S$-algebras which commutes with the sections.

We can construct a morphism 
$$ I_{2m-1}^{\rm ns} \longrightarrow \cA_{2m-1}^{\rm ns}$$ 
in the following way: let $(\widetilde{C},q_1,q_2,\phi) \in I_{2m-1}^{\rm ns}(S)$, then we have the diagram
$$
\begin{tikzcd}
	\spec_S(\cO_{mq_1})\bigsqcup \spec_S(\cO_{mq_2}) \arrow[d, "{(\id,\phi)}"] \arrow[r, hook] & \widetilde{C} \\
	\spec_S(\cO_{mq_1})                                                                     &              
\end{tikzcd}
$$
where the morphism $(\id,\phi)$ is \'etale of degree $2$. We denote by $C$ the pushout of the diagram and $p$ the image of $q$. We send $(\widetilde{C},q_1,q_2,\phi)$ to $(C,q)$. Notice that because both $q_1$ and $q_2$ are smooth sections, we have that $\cO_{mq_i}$ is the scheme-theoretic support of a Cartier divisor of $\widetilde{C}$, therefore it is flat for $i=1,2$. \Cref{lem:pushout} assures us that this construction is functorial and commutes with base change. 

\begin{proposition}\label{prop:descr-an-odd-ns}
	The pushout functor
	$$F^{\rm ns}: I_{2m-1}^{\rm ns} \longrightarrow \cA_{2m-1}^{\rm ns}$$ 
	is representable finite \'etale of degree $2$.
\end{proposition}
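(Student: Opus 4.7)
The plan is to show that the 2-fibered product $T := S \times_{\cA_{2m-1}^{\rm ns}} I_{2m-1}^{\rm ns}$ is represented by the étale degree 2 divisor $D := b^{-1}(p) \subset \widetilde{C}$, where $b\colon \widetilde{C} \to C$ is the relative $A_{2m-1}$-desingularization supplied by iterating \Cref{lem:blowup-an} a total of $m$ times. Morally, $F^{\rm ns}$ forgets the labeling of the two preimages of the singular point $p$, and such a labeling is precisely a section $S \to D$; hence one expects the fibers of $F^{\rm ns}$ to be torsors under the swap of the two sheets of $D$.

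To construct a morphism $D \to T$, I would base change the picture to $D$. The pullback $D \times_S D \subset \widetilde{C}_D$ is the disjoint union of the diagonal $\Delta_D$ and its clopen complement $\Delta_D'$, since $D \to S$ is étale; these give two disjoint smooth sections $q_1,q_2\colon D \to \widetilde{C}_D$. The triple $(\widetilde{C}_D,q_1,q_2)$ lies in $\Mtilde_{g-m,2[m]}^r(D)$ by \Cref{rem:stab}, using ampleness of $\omega_{C/S}$. To produce the isomorphism $\phi\colon \cO_{mq_1} \simeq \cO_{mq_2}$, apply \Cref{lem:cond-diag} to get the cartesian square
\[
\begin{tikzcd}
\cO_C \arrow[r,hook] \arrow[d,two heads] & b_*\cO_{\widetilde{C}} \arrow[d,two heads] \\
\cO_C/J_b \arrow[r,hook] & b_*\cO_{\widetilde{C}}/J_b \simeq \cO_{mq_1} \oplus \cO_{mq_2};
\end{tikzcd}
\]
as observed at the very start of this subsection, each composite $\cO_C/J_b \hookrightarrow \cO_{mq_1} \oplus \cO_{mq_2} \twoheadrightarrow \cO_{mq_i}$ is an isomorphism of $\cO_D$-algebras (it is an isomorphism on geometric fibers, and both sides are finite flat of the same rank over $\cO_D$), and $\phi$ is defined as the resulting composite.

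For the inverse $T \to D$: a point of $T(S)$ is a quadruple $(\widetilde{C}',q_1',q_2',\phi')$ together with an identification $\alpha$ of its pushout with $(C,p)$. Applying \Cref{prop:pushout-blowup} to this pushout diagram, combined with \Cref{lem:conductor} (which identifies $J_b$ with $I_{b^{-1}(p)}^m$ and hence with the ideal produced by the iterated blowup), forces a canonical isomorphism $\widetilde{C}' \simeq \widetilde{C}$ over $C$ under which $q_1' \sqcup q_2' \simeq D$ as Cartier divisors. Thus $q_1'\colon S \to \widetilde{C}$ factors through $D$ and defines a section of $D \to S$, which is the required map $T \to D$. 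The two constructions are visibly inverse to each other, so $T \simeq D$; since $D$ is a scheme finite étale of degree 2 over $S$, this simultaneously establishes representability and the assertion on the degree.

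The main technical obstacle is verifying that the hypotheses of \Cref{prop:pushout-blowup} and \Cref{prop:blowup-pushout} are satisfied in families: one needs $S$-flatness of the powers $I_D^n$ and compatibility of the pushout/blowup constructions with base change. This should reduce to the fact that $D \subset \widetilde{C}$ is a Cartier divisor lying in the smooth locus of $\widetilde{C} \to S$, which guarantees that $\cO_{\widetilde{C}}/I_D^n$ has locally constant fiber length, together with \Cref{lem:conductor} for the conductor in the family.
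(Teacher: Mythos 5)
Your proof is correct and follows essentially the same route as the paper's: both rest on the blowup--pushout equivalence of \Cref{prop:pushout-blowup} and \Cref{prop:blowup-pushout} (together with \Cref{lem:conductor}) to see that a preimage of $(C,p)$ amounts precisely to an ordering of the two branches of $b^{-1}(p)$. Your explicit identification of the fiber product with the degree-$2$ \'etale divisor $D=b^{-1}(p)$ is a more detailed write-up of what the paper compresses into ``it is clear''.
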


\begin{proof}
	
	It is a direct conseguence of both \Cref{prop:pushout-blowup} and \Cref{prop:blowup-pushout}. In fact  the two propositions assure us that the object $(C,p)$ is uniquely determined by the relative $A_{2m-1}$-desingularization $b:\widetilde{C}\rightarrow C$, the fiber $b^{-1}(p)$ and an automorphism of the $m$-thickening of $b^{-1}(p)$ which restricted to the geometric fibers over $S$ acts exchanging the two points of $b^{-1}(p)$. Because $b^{-1}(p)$ is finite \'etale of degree $2$, it is clear that $F^{\rm ns}$ is a finite \'etale morphism of degree $2$.
\end{proof}
Let $0\leq i\leq (g-m+1)/2$. In the same way, we can define morphisms 
$$  \Mtilde_{i,[m]}\times \Mtilde_{g-i-m+1,[m]} \longrightarrow \cF_{m}^c \times \cF_m^c$$ 
defined by the association $$\Big((\widetilde{C}_1,q_1),(\widetilde{C}_{2},q_{2})\Big) \mapsto \Big((\cO_{\widetilde{C_1}}/I_{q_1}^m,\cO_{\widetilde{C}_{2}}/I_{q_{2}}^m)\Big)$$
and we denote by $I_{2m-1}^{i}$ the fiber product $$(\Mtilde_{i,[m]}\times \Mtilde_{g-m-i+1,[m]})\times_{(\cF_{m}^c\times\cF_{m}^c)} \cE_{m,1}^c.$$

Similarly to the previous case, we can construct a functor 
$$ F^i: I_{2m-1}^i \longrightarrow \cA_{2m-i}^i$$ 
using the pushout construction. Again, we have the following result.

\begin{proposition}\label{prop:descr-an-odd-i}
	The functor $F^i$ is an isomorphism for $i \neq (g-m+1)/2$ whereas is finite \'etale of degree $2$ for $i = (g-m+1)/2$.
\end{proposition}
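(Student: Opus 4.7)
The plan is to mimic the proof of \Cref{prop:descr-an-odd-ns}, keeping track of the extra bookkeeping provided by the genera of the two connected components of the relative desingularization. Starting from an object $(C/S,p)\in\cA_{2m-1}^{i}(S)$, I would apply \Cref{prop:pushout-blowup} together with \Cref{prop:blowup-pushout} to recover the relative $A_{2m-1}$-desingularization $b\colon \widetilde C\to C$ and the Cartier divisor $b^{-1}(p)$; by \Cref{lem:blowup-an} this divisor is étale of degree $2$ over $S$, so it yields, possibly after an étale base change, two sections and an isomorphism of the associated $m$-thickenings, i.e.\ a candidate object of $I_{2m-1}^{i}$.

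The only freedom in this construction is the choice of ordering of the two components of $\widetilde C$. By definition of $\cA_{2m-1}^{i}$, these components have fiberwise arithmetic genera $i$ and $g-m-i+1$, and the same application of Proposition~15.5.7 and Proposition~15.6.5 of \cite{EGA} used in stratifying $\cA_{2m-1}^{\mathrm s}$ shows that each component extends to a flat, proper, finitely presented $S$-scheme of locally constant genus. When $i\neq (g-m+1)/2$ the two genera differ, so ordering by genus is canonical; this gives a well-defined inverse $G^{i}$ to $F^{i}$, and hence $F^{i}$ is an isomorphism.

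When $i=(g-m+1)/2$ the genera coincide and no such canonical ordering exists. I would then introduce a $\ZZ/2$-action on $I_{2m-1}^{i}$ that exchanges $(\widetilde C_1,q_1)$ with $(\widetilde C_2,q_2)$ and replaces $\phi$ by $\phi^{-1}$; this action is free on geometric points because it permutes the two pieces of the disjoint union $b^{-1}(p)$, and the pushout is manifestly invariant under it. Combined with the reconstruction above, the quotient of $I_{2m-1}^{i}$ by this $\ZZ/2$-action is identified with $\cA_{2m-1}^{i}$, so $F^{i}$ is finite étale of degree $2$.

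The main obstacle I anticipate is verifying, in the separating case, that the two components of the desingularization are interchanged by a genuine $\ZZ/2$-torsor rather than simply by a section-permuting symmetry; concretely, one must check that the splitting of $\widetilde C$ into its two connected components is itself only étale-locally trivial when $i=(g-m+1)/2$, whereas it is Zariski-locally canonical for $i\neq (g-m+1)/2$. This is exactly the dichotomy already isolated by the EGA references, so the remaining work is essentially bookkeeping rather than new geometry.
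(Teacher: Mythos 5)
Your proposal is correct and follows essentially the same route as the paper, which simply reduces to the argument for \Cref{prop:descr-an-odd-ns}: reconstruct $(\widetilde C,q_1,q_2,\phi)$ from $(C,p)$ via \Cref{prop:pushout-blowup} and \Cref{prop:blowup-pushout}, and observe that the only ambiguity is the ordering of the two branches of $b^{-1}(p)$. Your additional observation that this ordering is canonically determined by the (locally constant) genera of the two components when $i\neq (g-m+1)/2$, and only a $\ZZ/2$-ambiguity otherwise, is exactly the point the paper leaves implicit in saying the proof is ``exactly the same.''
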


\begin{proof}
	The proof is exactly the same as \Cref{prop:descr-an-odd-ns}.
\end{proof}

\bibliographystyle{plain}
\bibliography{Bibliografia}

\end{document}